\pgfplotsset{compat=1.14}
\tikzset{discont/.style={decoration={zigzag,segment length=12pt, amplitude=4pt},decorate}}
\pgfplotsset{compat=1.11}
\tikzset{
	v1/.style={line width=.5pt,blue!33!black},
	v2/.style={line width=.5pt,blue!66!black},
	v3/.style={line width=.5pt,blue!33},
	v4/.style={line width=.5pt,blue!66},
	v5/.style={line width=.5pt,black}
}
\definecolor{dred}{HTML}{C11B17}
\definecolor{dgreen}{HTML}{41A317}
\definecolor{dblue}{HTML}{00008B}
\definecolor{niceblue}{HTML}{00008B}
\definecolor{brilliantrose}{rgb}{1.0, 0.33, 0.64}
\definecolor{gold}{HTML}{ffd700}
\definecolor{lgold}{HTML}{ffe140}
\newcommand{\cemph}[1]{\textcolor{dred}{\emph{#1}}}
\newcommand{\R}{\mathbb R}
\newcommand{\K}{\mathcal K}
\newcommand{\B}{\mathbb{B}}
\renewcommand{\S}{\mathbb{S}}
\newcommand{\relint}{\mathrm{relint}}
\newcommand{\conv}{\mathrm{conv}}
\newcommand{\bd}{\mathrm{bd}}
\newcommand{\pos}{\mathrm{pos}}
\newcommand{\aff}{\mathrm{aff}}
\newcommand{\inter}{\mathrm{int}}
\definecolor{zzttqq}{rgb}{0.6,0.2,0}
			\definecolor{ccqqqq}{rgb}{0.8,0,0}
\definecolor{ffvvqq}{rgb}{1,0.3333333333333333,0}
\definecolor{zzffqq}{rgb}{0.6,1,0}
\definecolor{qqwuqq}{rgb}{0,0.39215686274509803,0}
\definecolor{ffzzqq}{rgb}{1,0.6,0}
\definecolor{ffqqqq}{rgb}{1,0,0}
\definecolor{zzttqq}{rgb}{0.6,0.2,0}
\definecolor{uuuuuu}{rgb}{0.26666666666666666,0.26666666666666666,0.26666666666666666}
\newtheorem{thm}{Theorem}[section]
\newtheorem{lemma}[thm]{Lemma}
\newtheorem{remark}[thm]{Remark}
\newtheorem{proposition}[thm]{Proposition}
\newtheorem{cor}[thm]{Corollary}
\newtheorem{example}[thm]{Example}
\begin{document}

\title[Mean inequalities for symmetrizations of convex sets]{Tightening and reversing the arithmetic-harmonic mean inequality for symmetrizations of convex sets}

\author[R. Brandenberg]{Ren\'e Brandenberg}\email{brandenb@ma.tum.de}
\author[K. von Dichter]{Katherina von Dichter}\email{dichter@ma.tum.de}
\author[B. Gonz\'alez Merino]{Bernardo Gonz\'alez Merino}\email{bgmerino@um.es}

\begin{abstract}
This paper deals with four symmetrizations of a convex set $C$: the intersection, the harmonic and the arithmetic mean, and the convex hull of $C$ and $-C$. A well-known result of Firey shows that those means build up a subset-chain in 
the given order. On the one hand, we determine the dilatation factors, depending on the asymmetry of $C$, to reverse 
the containments between any of those symmetrizations. On the other hand, we tighten the relations proven by Firey and show a stability result concerning those factors near the simplex.


\end{abstract}

\thanks{This research is a result of the activity developed within the framework of the Programme in Support of Excellence Groups of the Regi\'on de Murcia, Spain, by Fundaci\'on S\'eneca, Science and Technology Agency of the Regi\'on de Murcia. 
The third author is partially supported by Fundaci\'on S\'eneca project 19901/GERM/15 Spain and by Grant PGC2018-094215-B-I00 funded by MCIN/AEI/10.13039/501100011033 and by ``ERDF A way of making Europe''}

\date{\today}\maketitle
\section{Introduction and Notation}\label{sec:IntrNotMain}

The arithmetic-geometric-harmonic mean inequality together with minimum and maximum (which can be seen as the extreme means) states in the two-argument case 
\begin{equation}\label{eq:means_of_numbers}
\min\{a,b\}\leq\left(\frac{a^{-1}+b^{-1}}{2}\right)^{-1}\leq \sqrt{ab}\leq \frac{a+b}{2}\leq\max\{a,b\}
\end{equation}
for any $a, b > 0$, with equality in any of the inequalities if and only if $a=b$ (see \cite{HLP,Sch}).

For any $X\subset\R^n$ let $\conv(X)$ denote the \cemph{convex hull}, i.e.,~the smallest convex set containing $X$. A \cemph{segment} is the convex hull of $\{x,y\} \subset \R^n$, which we abbreviate by $[x,y]$. 
For any $X,Y \subset\R^n$, $\rho \in \R$ let $X+Y =\{x+y:x\in X,y\in Y\}$ be the \cemph{Minkowski sum} of $X$ and $Y$, and  $\rho X= \{ \rho x: x \in X\}$ the \cemph{$\rho$-dilatation} of $X$. We abbreviate $(-1)X$ by $-X$. The family of all \cemph{convex bodies} (full-dimensional compact convex sets) is denoted by $\K^n$ and for any $C\in\K^n$ we write $C^\circ=\{a\in\R^n: a^T x \leq 1,\, x\in C\}$ for the \cemph{polar} of $C$.

All the means above can be generalized for convex sets. One may identify means of numbers by means of segments via associating $a, b > 0$ with $[-a,a]$ and $[-b,b]$. Thus, e.g., the arithmetic mean of $a$ and $b$ is identified with $[-\frac{1}{2} \left( a+b \right), \frac{1}{2} \left( a+b \right) ] = \frac{1}{2} \left( [-a,a]+[-b,b] \right)$. In general, the \cemph{arithmetic mean} of $K,C \in \K^n$ is defined by $\frac{1}{2} (K+C)$, the \cemph{minimum} by $K\cap C$, and the \cemph{maximum} by $\conv(K\cup C)$. 
Since polarity can be regarded as the higher-dimensional counterpart of the inversion operation $x\rightarrow 1/x$ (cf.~\cite{MR}), the \cemph{harmonic mean} of $K$ and $C$ is defined by $\left( \frac{1}{2}(K^\circ+C^\circ) \right)^{\circ}$.
The geometric mean has been extended in several ways (cf.~\cite{BLYZ} or \cite{MR}). It would need a separate, more involved treatment. Here we focus only on the four other means. The study of means of convex bodies has been started by Firey in the 1960's  \cite{F,F2,F3}, but there also exist several recent papers (see, e.g.,~\cite{MR,MR2,MMR}).

Perphaps the most essential result of Firey is the extension of the harmonic-arithmetic mean inequality from positive numbers to convex bodies with 0 in their interior in \cite{F} (see \cite{MR} for a nice and short proof). Moreover, Firey's inequality may again be extended involving minimum and maximum.

\begin{proposition} \label{prop:means_of_sets}
 Let $C,K \in\K^n$ with $0$ in their interior. Then
 \begin{equation}\label{eq:means_of_sets}
   K\cap C\subset \left(\frac{K^\circ+C^\circ}{2}\right)^{\circ}\subset\frac{K+C}{2}\subset\conv(K\cup C),
 \end{equation}
 with equality between any of the means if and only if $K=C$.
\end{proposition}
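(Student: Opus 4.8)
The plan is to reduce the statement to two standard facts of polarity duality — the bipolar relation $A^{\circ\circ}=A$ and the identity $(A\cap B)^\circ=\conv(A^\circ\cup C^\circ)$ for bodies with $0$ in the interior, together with the order‑reversal $A\subseteq B\Leftrightarrow B^\circ\subseteq A^\circ$ — and to the scalar inequality \eqref{eq:means_of_numbers}. I will write $h_A$ and $\|\cdot\|_A$ for the support function and the gauge (Minkowski functional) of such a body $A$, and freely use $h_{A^\circ}=\|\cdot\|_A$, $h_{A+B}=h_A+h_B$ and $h_{\conv(A\cup B)}=\max\{h_A,h_B\}$. The two outer inclusions are essentially free: for $a\in A$, $b\in B$ the midpoint $\tfrac12 a+\tfrac12 b$ is a convex combination of $a$ and $b$, so $\tfrac12(A+B)\subseteq\conv(A\cup B)$ for all $A,B\in\K^n$; taking $(A,B)=(K,C)$ gives the rightmost inclusion in \eqref{eq:means_of_sets}, and taking $(A,B)=(K^\circ,C^\circ)$ gives $\tfrac12(K^\circ+C^\circ)\subseteq\conv(K^\circ\cup C^\circ)=(K\cap C)^\circ$, which after polarizing (and applying the bipolar relation to $K\cap C$) becomes the leftmost inclusion.

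The heart of the matter is Firey's middle inclusion $\bigl(\tfrac12(K^\circ+C^\circ)\bigr)^\circ\subseteq\tfrac12(K+C)$. First I would record the pointwise description $\bigl(\tfrac12(K^\circ+C^\circ)\bigr)^\circ=\{z:\|z\|_K+\|z\|_C\le2\}$, which follows because $z$ lies in the left side iff $\sup_{a\in K^\circ,\,b\in C^\circ}\bigl\langle z,\tfrac12(a+b)\bigr\rangle\le1$ and that supremum equals $\tfrac12\bigl(h_{K^\circ}(z)+h_{C^\circ}(z)\bigr)=\tfrac12(\|z\|_K+\|z\|_C)$. Now fix $z\neq0$, put $s=\|z\|_K>0$ and $t=\|z\|_C>0$ with $s+t\le2$; the harmonic--arithmetic case of \eqref{eq:means_of_numbers} yields $\tfrac1s+\tfrac1t\ge\tfrac{4}{s+t}\ge2$, so (using this together with the trivial bounds) the interval $\bigl[\max\{0,2-\tfrac1t\},\ \min\{2,\tfrac1s\}\bigr]$ is nonempty. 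Choosing $\lambda$ in it we have $\lambda s\le1$ and $(2-\lambda)t\le1$, i.e. $\lambda z\in K$ and $(2-\lambda)z\in C$, hence $z=\tfrac12\bigl(\lambda z+(2-\lambda)z\bigr)\in\tfrac12(K+C)$, as desired.

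For the equality claim, the ``if'' direction is immediate ($K=C$ collapses all four means to $K$), so I would prove ``only if'' for the three consecutive pairs in the chain; equality between any non‑adjacent pair forces equality of all pairs in between, so this suffices. If $\tfrac12(K+C)=\conv(K\cup C)$, comparing support functions gives $\tfrac12(h_K+h_C)=\max\{h_K,h_C\}$ identically, and the equality case of \eqref{eq:means_of_numbers} forces $h_K=h_C$, so $K=C$; running the same argument on $K^\circ,C^\circ$ (using $(K\cap C)^\circ=\conv(K^\circ\cup C^\circ)$) settles $K\cap C=\bigl(\tfrac12(K^\circ+C^\circ)\bigr)^\circ$. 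There remains $\bigl(\tfrac12(K^\circ+C^\circ)\bigr)^\circ=\tfrac12(K+C)$: for a direction $u\in\mathbb{S}^{n-1}$ let $k=r_K(u)u\in\bd K$ and $c=r_C(u)u\in\bd C$ with $r_A(u)=1/\|u\|_A$; then $\tfrac12(k+c)\in\tfrac12(K+C)=\{z:\|z\|_K+\|z\|_C\le2\}$ reads $1+\tfrac12\bigl(\tfrac{r_K(u)}{r_C(u)}+\tfrac{r_C(u)}{r_K(u)}\bigr)\le2$, and the equality case of the geometric--arithmetic inequality in \eqref{eq:means_of_numbers} forces $r_K(u)=r_C(u)$ for every $u$, whence $K=C$.

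I expect the routine parts to be the polarity bookkeeping and the two outer inclusions, while the two genuine points of substance will be (i) the clean identity for $\bigl(\tfrac12(K^\circ+C^\circ)\bigr)^\circ$ and the arithmetic--harmonic selection of $\lambda$ that drives Firey's inclusion, and (ii) the ray‑wise arithmetic--geometric computation pinning down the equality case $\bigl(\tfrac12(K^\circ+C^\circ)\bigr)^\circ=\tfrac12(K+C)$, which does not appear to follow from the other two equality cases and therefore needs its own argument.
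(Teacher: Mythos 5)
Your proof is correct, and its key step — reducing Firey's middle inclusion to the arithmetic--harmonic mean inequality applied along each ray, via the identity $\bigl(\tfrac12(K^\circ+C^\circ)\bigr)^\circ=\{z:\|z\|_K+\|z\|_C\le 2\}$ — is precisely the argument the paper itself invokes and ``restates'' when proving Theorem~\ref{thm:Charact_Opt_Means_KC}. The paper does not reprove Proposition~\ref{prop:means_of_sets} but attributes it to Firey~\cite{F} with a short proof in~\cite{MR}; your outer inclusions via $(K\cap C)^\circ=\conv(K^\circ\cup C^\circ)$ and polarity, and your ray-wise AM--GM treatment of the remaining equality case, fill in the standard details consistently with that approach.
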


In the following we analyze sharpness of the set-containment inequalities with respect to optimal containment (instead of equality of sets):
For any $C, K\in\K^n$ we say $K$ is \cemph{optimally contained} in $C$ and denote it by $K\subset^{opt}C$, if $K\subset C$ and $K\not \subset \rho C+t$ for any $\rho \in [0,1)$ and $t\in\R^n$. For $C_1, \dots, C_k  \in\K^n$ we say $C_1 \subset \ldots \subset C_k$ is \cemph{left-to-right optimal} if $C_1 \subset^{opt} C_k$. 

The starting point of our investigation is the following generalization of
\cite[Theorem 3]{BDG} for arbitrary convex sets with 0 in their interior. 
\begin{thm}\label{thm:Charact_Opt_Means_KC}

	Let $C,K \in \K^n$ with $0 \in \inter(K \cap C)$. 
	Then 
	\[K \cap C \subset^{opt} \conv(K \cup C) \iff \left(\frac12 (K^\circ+C^\circ) \right)^{\circ}\subset^{opt} \frac12 (K+C).\]
\end{thm}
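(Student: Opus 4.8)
\noindent\emph{Proof strategy.} The plan is to reduce both optimal-containment assertions to pointwise conditions on support functions and then to show these conditions coincide. Write $h_L$ for the support function of a convex body $L$, and recall the standard characterization of optimal containment: if $A,B\in\K^n$ with $A\subset B$ and $0\in\inter A$, then $A\subset^{opt}B$ if and only if
\[
0\in\conv\{u\in\R^n\setminus\{0\}:h_A(u)=h_B(u)\}
\]
(equivalently, at most $n+1$ of these \emph{contact directions} have $0$ in their convex hull). By Proposition~\ref{prop:means_of_sets} one always has $K\cap C\subset\conv(K\cup C)$ and $(\frac12(K^\circ+C^\circ))^\circ\subset\frac12(K+C)$, and both inner bodies have $0$ in their interior (for $(\frac12(K^\circ+C^\circ))^\circ$ because $K^\circ,C^\circ$ are bounded). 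Hence each side of the asserted equivalence says exactly that $0$ lies in the convex hull of the corresponding set of contact directions, and it suffices to prove the pointwise claim: \emph{for every $u\in\R^n\setminus\{0\}$,}
\[
h_{K\cap C}(u)=h_{\conv(K\cup C)}(u)\ \Longleftrightarrow\ h_{(\frac12(K^\circ+C^\circ))^\circ}(u)=h_{\frac12(K+C)}(u),
\]
\emph{both being moreover equivalent to: $h_K(u)=h_C(u)$, and the faces of $K$ and $C$ exposed by $u$ share a point.}

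For this I would use the identities (valid since $0\in\inter(K\cap C)$) $h_{\conv(K\cup C)}(u)=\max\{h_K(u),h_C(u)\}$, $h_{\frac12(K+C)}(u)=\frac12(h_K(u)+h_C(u))$, the infimal-convolution formula $h_{K\cap C}(u)=\min_{v}\{h_K(v)+h_C(u-v)\}$, and --- after unravelling the definitions of polar and Minkowski gauge --- $h_{(\frac12(K^\circ+C^\circ))^\circ}(u)=\min_{v}2\max\{h_K(v),h_C(u-v)\}$, the two minima being attained by coercivity. The key first step is a strictness observation inherited from the numerical inequality~\eqref{eq:means_of_numbers}: since $h_{K\cap C}(u)\le\min\{h_K(u),h_C(u)\}\le\max\{h_K(u),h_C(u)\}$, and (using the split $v=\tfrac{h_C(u)}{h_K(u)+h_C(u)}u$) $h_{(\frac12(K^\circ+C^\circ))^\circ}(u)\le\tfrac{2h_K(u)h_C(u)}{h_K(u)+h_C(u)}\le\frac12(h_K(u)+h_C(u))$, equality on either side forces $h_K(u)=h_C(u)=:c$. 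Granting this, the diagonal split $v=u/2$ attains the common value $c$ in both minimization problems, so each of the two equalities is equivalent to $v=u/2$ being a minimizer of the relevant convex function of $v$. Writing $F_L(u):=\{x\in L:\iprod{x}{u}=h_L(u)\}$ for the face of $L$ exposed by $u$ and recalling $\partial h_L(u/2)=F_L(u)$, the subdifferential sum rule turns the first condition into $0\in F_K(u)-F_C(u)$ --- which is precisely $F_K(u)\cap F_C(u)\ne\emptyset$ --- and the max rule turns the second into $0\in\conv\big(F_K(u)\cup(-F_C(u))\big)$. The latter is again equivalent to $F_K(u)\cap F_C(u)\ne\emptyset$: one implication is trivial, and for the other, by convexity of the faces one may write $0=\lambda a+(1-\lambda)(-b)$ with $a\in F_K(u)$, $b\in F_C(u)$, $\lambda\in[0,1]$; since $F_K(u)$ and $F_C(u)$ lie in the hyperplane $\{x:\iprod{x}{u}=c\}$, which avoids $0$ as $c>0$, pairing with $u$ gives $(2\lambda-1)c=0$, hence $\lambda=\frac12$ and $a=b$. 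This proves the pointwise claim, and with it the theorem.

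I expect this last chain to be the main obstacle: correctly identifying the support function of the harmonic mean as $\min_v 2\max\{h_K(v),h_C(u-v)\}$, using numerical strictness to pin the optimal split to the diagonal so that the problem collapses to a face condition, and observing that the a priori weaker relation $0\in\conv(F_K(u)\cup(-F_C(u)))$ is in fact equivalent to $F_K(u)\cap F_C(u)\ne\emptyset$ because the two faces sit in a common hyperplane off the origin. By contrast, the implication ``$K\cap C\subset^{opt}\conv(K\cup C)\Rightarrow(\frac12(K^\circ+C^\circ))^\circ\subset^{opt}\frac12(K+C)$'' needs none of this: if $(\frac12(K^\circ+C^\circ))^\circ\subset\rho\cdot\frac12(K+C)+t$ with $\rho\in[0,1)$, then by Proposition~\ref{prop:means_of_sets},
\[
K\cap C\subset\big(\tfrac12(K^\circ+C^\circ)\big)^\circ\subset\rho\cdot\tfrac12(K+C)+t\subset\rho\cdot\conv(K\cup C)+t,
\]
so $K\cap C\subset^{opt}\conv(K\cup C)$ already fails. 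The only external input is the nontrivial direction of the optimal-containment characterization above, which is classical.
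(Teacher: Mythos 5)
Your proof is correct, but the route is genuinely different from the paper's. The paper proves only the nontrivial backward implication and does so in primal, concrete terms: by Proposition~\ref{prop:Opt_Containment} it picks touching points $p^j$ with outer normals $u^j$ witnessing $\left(\frac12(K^\circ+C^\circ)\right)^\circ\subset^{opt}\frac12(K+C)$, uses the gauge identity $\|p\|_{(\frac12(K^\circ+C^\circ))^\circ}=\frac12\bigl(\|p\|_K+\|p\|_C\bigr)$ together with the numerical arithmetic-harmonic inequality to force $\|p\|_K=\|p\|_C=1$ (hence $p\in K\cap C$ and $\|p\|_{K\cap C}=1$), and then shows by a short contradiction that the supporting halfspace of $\frac12(K+C)$ at $p$ also supports $\conv(K\cup C)$; Proposition~\ref{prop:Opt_Containment} then yields the claim. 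You instead work entirely on the dual side: reformulating optimal containment as $0\in\conv\{u\ne0:h_A(u)=h_B(u)\}$ (an equivalent reading of Proposition~\ref{prop:Opt_Containment}), you prove the stronger pointwise statement that the \emph{sets} of contact directions for the two containments coincide, each being characterized by ``$h_K(u)=h_C(u)$ and $F_K(u)\cap F_C(u)\ne\emptyset$''. This requires more machinery --- the infimal-convolution formula for $h_{K\cap C}$, the less-standard formula $h_{(\frac12(K^\circ+C^\circ))^\circ}(u)=\min_v 2\max\{h_K(v),h_C(u-v)\}$, and the subdifferential sum and max rules --- but it makes the symmetry of the two containment problems transparent, yields both implications simultaneously, and isolates the shared pivot with the paper's argument, namely that equality in the numerical AM-HM inequality pins down $h_K(u)=h_C(u)$ (paper: $\|p\|_K=\|p\|_C$). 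The two proofs are dual perspectives on the same contact structure; the paper's is shorter and more elementary, yours is a clean functional-analytic restatement that in particular records the exact contact-direction equivalence rather than just the one implication that is needed.
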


Note that Theorem \ref{thm:Charact_Opt_Means_KC} implies that left-to-right optimality in \eqref{eq:means_of_sets} depend solely on the optimal containment of the harmonic in the arithmetic mean.
 
If $C=-C+t$ for some $t \in \R^n$, we say $C$ is \cemph{symmetric}, and if $C=-C$, we say $C$ is \cemph{$0$-symmetric}.
The family of 0-symmetric convex bodies is denoted by $\K^n_0$. 

A special focus in our study lies on optimal containments of means of $C$ and $-C$ of a convex body $C$, which are all symmetrizations of $C$.
Symmetrizations are frequently used in convex geometry, e.g.,~as extreme cases of a variety of geometric inequalities. Consider, e.g., the Bohnenblust inequality \cite{Bo}, which bounds the ratio of the circumradius and the diameter of convex bodies in arbitrary normed spaces. The equality case in this inequality is reached in normed spaces with $S \cap (-S)$ or $ \frac{1}{2} (S-S)$ as their unit balls \cite{BrK}, where $S$ denotes an $n$-simplex with center of gravity in 0. These means also appear in characterizations of spaces for which $C$ is complete or reduced 
\cite[Prop.~3.5 -- 3.10]{BGJM}. 
We provide more motivation on considering optimal containments between different symmetrizations of $C$ in the Appendix. 

A major part of this paper is devoted to a better understanding of the optimal containments between those symmetrizations depending on the asymmetry of the initial body. We naturally require all symmetrizations of an already symmetric $C$ to coincide with $C$. This is always true for the arithmetic mean $\frac12(C-C)$, but $0$ needs to be the center of symmetry for the other three considered means. This indicates the need of fixing a meaningful center for every convex body. The most common choice of an asymmetry measure and a corresponding center are the \cemph{Minkowski asymmetry} of $C \in \K^n$, which is defined by 
\[s(C):=\inf \{ \rho >0: C-c \subset \rho (C-c), c \in \R^n \},\] and the (not necessarily unique) \cemph{Minkowski center} of $C$, which is any $c \in \R^n$ fulfilling $C-c \subset s(C)(c-C)$ \cite{Gr, BG}. If $0$ is a Minkowski center, we say $C$ is \cemph{Minkowski centered}. 

Note that $s(C)\in[1,n]$, with $s(C)=1$ if and only if $C$ is symmetric, and $s(C)=n$ if and only if $C$ is an $n$-dimensional simplex \cite{Gr}. Moreover, the Minkowski asymmetry $s:\K^n\rightarrow[1,n]$ is continuous w.r.t.~the Hausdorff metric (see \cite{Gr}, \cite{Sch} for some basic properties) and invariant under non-singular affine transformations. 
We believe that the Minkowski asymmetry is most suitable for studying optimal containments and consequently focus on Minkowski centered convex sets.

The classical norm relations $\|x\|_\infty \leq \|x\|_2\leq \|x\|_1$ with $x\in\R^n$ can be naturally reversed 
by the inequalities $\|x\|_1 \leq \sqrt{n}\|x\|_2 \leq n \|x\|_\infty$, which both transfer to left-to-right optimal containments between the corresponding unit ball of these $\ell_p$-spaces. Similarly, we consider the norms induced by the means of $K$ and $C$. Doing so, \eqref{eq:means_of_sets} can be read as follows:
\begin{equation}\label{eq:normrelations}
\|x\|_{\conv(K\cup C)} \leq \|x\|_{\frac{K+C}{2}} \leq \|x\|_{\left(\frac{K^\circ+C^\circ}{2}\right)^{\circ}} \leq \|x\|_{K\cap C}.    
\end{equation}
In order to reverse this chain of inequalities, we need to provide a chain of (optimal) inclusions, which is reverse to \eqref{eq:means_of_sets}. This is not possible for general convex bodies, since the scaling factors of the reverse inclusions cannot be bounded in general. However, assuming Minkowski centeredness of the considered body, this problem can be fixed.

\begin{thm}\label{thm:reverse_inclusions}
Let 
$C\in\K^n$ be Minkowski centered. 
Then
\begin{enumerate}[(i)]
\item $\conv(C\cup(-C))\subset^{opt} s(C) (C\cap(-C))$,
\item $\conv(C\cup(-C))\subset^{opt} \frac{2s(C)}{s(C)+1} \frac{C-C}{2}$,
\item $\left(\frac{C^\circ-C^\circ}{2}\right)^\circ \subset^{opt} \frac{2s(C)}{s(C)+1}(C\cap(-C))$,
\item $\frac{C-C}{2}\subset^{opt} \frac{s(C)+1}{2} (C\cap(-C))$, and
\item $\conv(C\cup(-C))\subset^{opt}  \frac{s(C)+1}{2}\left(\frac{C^\circ-C^\circ}{2}\right)^\circ$.
\item $\frac{C-C}{2}\subset \frac{s(C)+1}{2}\left(\frac{C^\circ-C^\circ}{2}\right)^\circ$, and for all $s \in [n]$ there exists a Minkowski centered $C \in \K^n$ with $s(C)=s$, such that the containment is optimal.
\end{enumerate}
\end{thm}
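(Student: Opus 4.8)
The containment asserted in (vi) needs no new argument. Since $C$ is Minkowski centered we have $0\in\inter(C)$, hence also $0\in\inter(-C)$ and $0\in\inter\bigl((-C)\cap C\bigr)$, so Proposition~\ref{prop:means_of_sets} applies to the pair $(-C,C)$ and gives in particular $\frac{C-C}{2}=\frac{(-C)+C}{2}\subset\conv(C\cup(-C))$. Chaining this with part~(v) of the theorem yields
\[
\frac{C-C}{2}\subset\conv(C\cup(-C))\subset\frac{s(C)+1}{2}\left(\frac{C^\circ-C^\circ}{2}\right)^\circ ,
\]
which is the desired inclusion. (Alternatively, one chains part~(iv) with the inclusion $C\cap(-C)\subset\left(\frac{C^\circ-C^\circ}{2}\right)^\circ$, the minimum-in-harmonic-mean part of Proposition~\ref{prop:means_of_sets} for the pair $(-C,C)$.) So the substance of the statement lies in the optimality.

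For the optimality I would begin with the reduction that $0$-symmetry affords: for $A,B\in\K^n_0$ one has $A\subset^{opt}B$ if and only if $A\subset B$ and $\partial A\cap\partial B\neq\emptyset$. Indeed, if $A\subset\rho B+t$ with $\rho\in[0,1)$ then also $A=-A\subset\rho B-t$, so $A\subset(\rho B+t)\cap(\rho B-t)\subset\rho B$ by convexity, forcing $A\subset\inter B$; conversely $A\subset\inter B$ gives $A\subset\rho B$ for $\rho:=\max_{x\in A}\|x\|_B<1$. As both sides of (vi) are $0$-symmetric, it therefore suffices to produce, for each $s\in[1,n]$, a Minkowski centered $C\in\K^n$ with $s(C)=s$ such that the boundaries of $\frac{C-C}{2}$ and of $\frac{s(C)+1}{2}\left(\frac{C^\circ-C^\circ}{2}\right)^\circ$ meet.

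The plan is then to interpolate between the two extreme cases. For $s=1$ every symmetrization of a $0$-symmetric set coincides with the set, and the factor $\frac{s+1}{2}=1$ is trivially optimal. For $s=n$ I would take $C=S$, a regular simplex with vertices $v_0,\dots,v_n$ normalized so that $v_i^Tv_j=n$ if $i=j$ and $v_i^Tv_j=-1$ otherwise; then $\sum_i v_i=0$ (so $S$ is Minkowski centered with $s(S)=n$) and $S^\circ=-S$, whence $\frac{S^\circ-S^\circ}{2}=\frac{S-S}{2}$ and $\left(\frac{S^\circ-S^\circ}{2}\right)^\circ=2(S-S)^\circ$. Using that $\min\{\mu>0:D\subset\mu D^\circ\}=\max\{x^Ty:x,y\in D\}$ for $D\in\K^n_0$, the optimality of (vi) at $C=S$ reduces to the elementary computation $\max\{x^Ty:x,y\in S-S\}=2(n+1)$, the maximum being attained exactly at $x=y=v_i-v_j$ with $i\neq j$; this forces the factor to equal $\frac{n+1}{2}=\frac{s(S)+1}{2}$. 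For $s$ strictly between $1$ and $n$ I would run a one-parameter family of simplex-based polytopes, for instance $C_\sigma:=\conv\bigl(\{v_i\}_i\cup\{-\frac{1}{\sigma}v_i\}_i\bigr)$ with $\sigma\in[1,n]$, so that $C_1=\conv(S\cup(-S))$ is $0$-symmetric, $C_n=S$, each $C_\sigma$ is Minkowski centered (it inherits the symmetry group of $S$, which forces its Minkowski center to be $0$), and $\sigma\mapsto s(C_\sigma)$ is continuous with boundary values $1$ and $n$ and hence surjects onto $[1,n]$. One then computes $C_\sigma^\circ$, the difference body $C_\sigma-C_\sigma$, and the harmonic mean $\left(\frac{C_\sigma^\circ-C_\sigma^\circ}{2}\right)^\circ$ explicitly, and verifies that the direction witnessing optimality for the simplex persists along the whole family, which gives the factor $\frac{s(C_\sigma)+1}{2}$ and completes the argument.

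The main obstacle is precisely this last step: performing the polarity and Minkowski-difference computations for the interpolating family and, above all, proving that the dilation factor is \emph{exactly} $\frac{s+1}{2}$ and not smaller — that is, pinpointing the extremal direction and checking that no other vertex of $\frac{C_\sigma-C_\sigma}{2}$ sticks out of a smaller dilate of the harmonic mean. A lesser, routine point is to confirm that $\sigma\mapsto s(C_\sigma)$ indeed covers all of $[1,n]$, which follows from the continuity of $s$ together with the two endpoint values.
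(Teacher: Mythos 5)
You address only part (vi) and invoke parts (iv) and (v) as already proven; since the statement to be shown is the full theorem, parts (i)--(v) also require arguments (each is short, based on $-C\subset s(C)\,C$ and polarity, but none is automatic). For (vi) itself your reasoning is sound as far as it goes: the containment follows by chaining (v) with Proposition~\ref{prop:means_of_sets}, the reduction of optimality to $\bd\left(\frac{C-C}{2}\right)\cap\bd\left(\frac{s(C)+1}{2}\left(\frac{C^\circ-C^\circ}{2}\right)^\circ\right)\neq\emptyset$ is exactly the last clause of Proposition~\ref{prop:Opt_Containment}, and the two endpoint checks are correct ($s=1$ is trivial, and for $s=n$ the identity $\min\{\mu:D\subset\mu D^\circ\}=\max\{x^Ty:x,y\in D\}$ together with $S^\circ=-S$ reduces matters to $\max\{x^Ty:x,y\in S-S\}=2(n+1)$, which you compute correctly). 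The genuine gap is that for $1<s<n$ --- which is the entire content of the existence claim --- you give only a program: pick $C_\sigma=\conv\bigl(\{v_i\}\cup\{-\tfrac{1}{\sigma}v_i\}\bigr)$, compute polars and difference bodies, locate the extremal direction, verify the factor. You explicitly flag the central computation as ``the main obstacle''. As written this is a plan, not a proof.

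For comparison, the paper's construction is $C=S\cap(-sS)$ (up to polarity and rescaling the dual of your family), for which Remark~\ref{ex:partial-symm-of-simplex} shows $s(C)=s$ exactly. Crucially, the paper's argument sidesteps any explicit computation of $\left(\frac{C^\circ-C^\circ}{2}\right)^\circ$: from the identity $\|z\|_{\left(\frac{C^\circ-C^\circ}{2}\right)^\circ}=\frac12\bigl(\|z\|_C+\|z\|_{-C}\bigr)$ and part (iv), the optimal factor is bounded above by $\max_{z\in\frac{C-C}{2}}\|z\|_{C\cap(-C)}=\frac{s+1}{2}$, and equality is then witnessed by a single explicitly constructed $z$ (half the difference of two face-centers of $C$) for which $\|z\|_C=\|z\|_{-C}=\frac{s+1}{2}$; this uses the convenient fact that $C\cap(-C)=S\cap(-S)$ is independent of $s\in[1,n]$ for this family. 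To complete your proposal you would need either to rediscover a shortcut of this kind or to carry out the raw polarity computation along your one-parameter family --- and neither is present.
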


After the proof of Theorem \ref{thm:reverse_inclusions} we will also provide an example that shows that the containment in Part (vi) above may not be optimal and derive a lower bound for the minimal dilatation factor needed for this covering.

As a consequence of Theorem \ref{thm:reverse_inclusions}, we derive the following left-to-right optimal containment chains.

\begin{cor} \label{rem:left-to-right-opt}
Let 
$C \in \K^n$ be Minkowski centered.
Then the following containment chains are both left-to-right optimal:
\begin{enumerate}[(i)]
    \item $\conv(C \cup (-C)) \subset \frac{s(C)+1}{2} \left(\frac{C^\circ-C^\circ}{2}\right)^\circ \subset s(C) (C \cap (-C))$, and
    \item $\conv(C \cup (-C)) \subset \frac{2s(C)}{s(C)+1}\frac{C-C}{2} \subset s(C) (C \cap (-C))$.
\end{enumerate}
Moreover, for the following containment chains always apply:
\begin{enumerate}[(i)]
    \item[(iii)] $\frac{C-C}{2} \subset \conv(C \cup (-C)) \subset \frac{s(C)+1}{2} \left(\frac{C^\circ-C^\circ}{2}\right)^\circ$, and
    \item[(iv)] $\frac{C-C}{2} \subset \frac{s(C)+1}{2} C \cap (-C) \subset \frac{s(C)+1}{2} \left(\frac{C^\circ-C^\circ}{2}\right)^\circ$, and
\end{enumerate}
for every $s \in [n]$ there exist $C \in \K^n$ with $s(C)=s$, such that these chains are left-to-right optimal.
\end{cor}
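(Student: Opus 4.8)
The plan is to assemble all four chains by concatenating single containments that are already available — Firey's inequality \eqref{eq:means_of_sets} applied to the pair $C,-C$, together with the dilatations from Theorem~\ref{thm:reverse_inclusions} — and then to read off left-to-right optimality directly from the two endpoints of each chain, using the definition that $C_1\subset\dots\subset C_k$ is left-to-right optimal precisely when $C_1\subset^{opt}C_k$. The first step is the observation that $(-C)^\circ=-C^\circ$, so that the harmonic mean of $C$ and $-C$ is $\bigl(\tfrac12(C^\circ+(-C)^\circ)\bigr)^\circ=\bigl(\tfrac{C^\circ-C^\circ}{2}\bigr)^\circ$ and the arithmetic mean is $\tfrac{C-C}{2}$. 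Since $C$ is Minkowski centered we have $0\in\inter(C)$, so Proposition~\ref{prop:means_of_sets} with $K=-C$ gives
\[
C\cap(-C)\subset\Bigl(\tfrac{C^\circ-C^\circ}{2}\Bigr)^\circ\subset\tfrac{C-C}{2}\subset\conv(C\cup(-C)).
\]
Every single inclusion still missing, together with its dilatation factor, is then one of the statements of Theorem~\ref{thm:reverse_inclusions}(i)--(vi).

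For chain (i) I would note that its first inclusion is Theorem~\ref{thm:reverse_inclusions}(v), and that dividing its second inclusion by $\tfrac{s(C)+1}{2}>0$ turns it into $\bigl(\tfrac{C^\circ-C^\circ}{2}\bigr)^\circ\subset\tfrac{2s(C)}{s(C)+1}(C\cap(-C))$, which is Theorem~\ref{thm:reverse_inclusions}(iii). For chain (ii) the first inclusion is Theorem~\ref{thm:reverse_inclusions}(ii), and dividing its second inclusion by $\tfrac{2s(C)}{s(C)+1}$ turns it into $\tfrac{C-C}{2}\subset\tfrac{s(C)+1}{2}(C\cap(-C))$, which is Theorem~\ref{thm:reverse_inclusions}(iv). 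Hence (i) and (ii) hold for every Minkowski centered $C$. Both chains have endpoints $\conv(C\cup(-C))$ and $s(C)(C\cap(-C))$, and Theorem~\ref{thm:reverse_inclusions}(i) states exactly that $\conv(C\cup(-C))\subset^{opt}s(C)(C\cap(-C))$; therefore both chains are left-to-right optimal, and for \emph{all} Minkowski centered $C$.

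For chains (iii) and (iv) I would proceed the same way. The first inclusion of (iii), $\tfrac{C-C}{2}\subset\conv(C\cup(-C))$, is the last inclusion of the displayed Firey chain, and its second inclusion is Theorem~\ref{thm:reverse_inclusions}(v). The first inclusion of (iv) is Theorem~\ref{thm:reverse_inclusions}(iv), and dividing its second inclusion by $\tfrac{s(C)+1}{2}$ gives $C\cap(-C)\subset\bigl(\tfrac{C^\circ-C^\circ}{2}\bigr)^\circ$, the first inclusion of the Firey chain; so (iii) and (iv) hold for every Minkowski centered $C$ as well. Both chains share the endpoints $\tfrac{C-C}{2}$ and $\tfrac{s(C)+1}{2}\bigl(\tfrac{C^\circ-C^\circ}{2}\bigr)^\circ$, so left-to-right optimality of either of them is the assertion $\tfrac{C-C}{2}\subset^{opt}\tfrac{s(C)+1}{2}\bigl(\tfrac{C^\circ-C^\circ}{2}\bigr)^\circ$; this fails in general, but Theorem~\ref{thm:reverse_inclusions}(vi) supplies, for every $s\in[n]$, a Minkowski centered $C\in\K^n$ with $s(C)=s$ for which it holds, and the same $C$ then makes both (iii) and (iv) left-to-right optimal.

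I do not expect a genuine obstacle here: the argument is essentially bookkeeping. The only points deserving care are the identity $(-C)^\circ=-C^\circ$ that converts \eqref{eq:means_of_sets} into the symmetrization chain above, the correct telescoping of the dilatation factors when two consecutive inclusions are composed, and the fact that ``left-to-right optimal'' constrains only the two endpoints of a chain — so that the only optimality inputs needed are Theorem~\ref{thm:reverse_inclusions}(i) (for (i) and (ii)) and Theorem~\ref{thm:reverse_inclusions}(vi) (for (iii) and (iv)).
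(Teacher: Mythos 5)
Your proof is correct and is exactly the derivation the paper intends but leaves implicit (the corollary is introduced only with ``As a consequence of Theorem~\ref{thm:reverse_inclusions}\ldots'' and no separate proof is given). You assemble each chain from the individual containments in Theorem~\ref{thm:reverse_inclusions} and the symmetrized Firey chain, and you correctly reduce left-to-right optimality of a chain to the optimal containment of its two endpoints, which is supplied by Theorem~\ref{thm:reverse_inclusions}(i) for (i),(ii) and by Theorem~\ref{thm:reverse_inclusions}(vi) for (iii),(iv).
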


Based on this corollary, one obtains, e.g., that the following reverse inequality chain of \eqref{eq:normrelations} is sharp w.r.t.~$s(C)$
\begin{equation}\label{eq:LtoR}
\|x\|_{C\cap (-C)} \leq \frac{s(C)+1}{2} \|x\|_{\frac{C-C}{2}} \leq s(C) \|x\|_{\conv(C\cup (-C))}. \end{equation}

Some containments of symmetrizations in the forward direction are always optimal (see \cite{BDG}): 
\begin{equation*} 
\frac{C-C}{2} \subset^{opt} \conv ( C \cup (-C) ) \quad\text{ and }\quad C \cap (-C) \subset^{opt} \left(\frac{C^\circ-C^\circ}{2}\right)^\circ.
\end{equation*}
Using Proposition \ref{thm:Charact_Opt_Means_KC}, we see that \eqref{eq:means_of_sets} may be left-to-right optimal even for non-symmetric $C$.
In particular, considering a regular Minkowski centered simplex $S \in \K^3$, the four means are a cross-polytope (minimum), a rhombic dodecahedron (harmonic mean), a cube octahedron (arithmetic mean), and a cube (maximum) and they build a left-to-right optimal chain of containments (see Figure \ref{fig:symms-of-tetrahedron}).

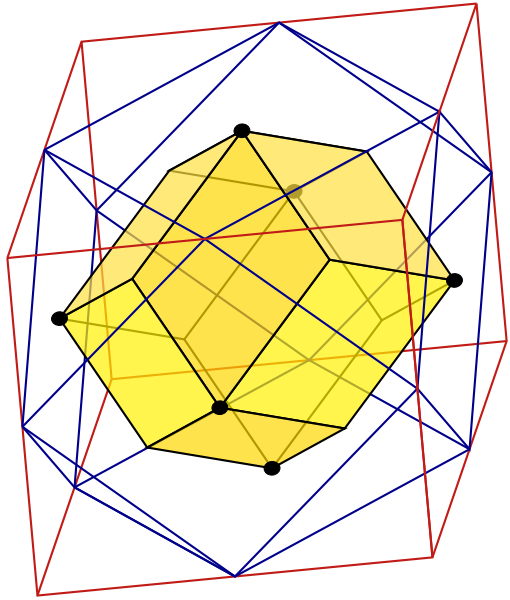
\begin{figure}[ht] 
	\centering
	\begin{tikzpicture}[scale=2.5]

	\draw[thick, dred,rotate around x=12, rotate around y=10](1,1,-1)--(-1,1,-1)--(-1,1,1);
	\draw[thick, dred,rotate around x=12,rotate around y=10](1,1,-1)--(1,-1,-1)--(1,-1,1)--(-1,-1,1)--(-1,1,1);
	\draw[thick, dred, rotate around x=12,rotate around y=10](1,-1,-1)--(-1,-1,-1)--(-1,1,-1);
	\draw[thick, dred, rotate around x=12,rotate around y=10](-1,-1,-1)--(-1,-1,1);

    \draw [fill,rotate around x=12,rotate around y=10] (0,0,-1) circle [radius=0.04];
    
	\draw[thick, niceblue,rotate around x=12,rotate around y=10](-1,1,0)--(-1,0,1);
	\draw[thick, niceblue,rotate around x=12,rotate around y=10](-1,0,1)--(-1,-1,0)--(0,-1,1);
	\draw[thick, niceblue,rotate around x=12,rotate around y=10](-1,-1,0)--(0,-1,1)--(1,-1,0);
	\draw[thick,  niceblue,rotate around x=12,rotate around y=10](-1,-1,0)--(0,-1,-1)--(1,-1,0);
	\draw[thick,  niceblue,rotate around x=12,rotate around y=10](-1,1,0)--(-1,0,-1)--(-1,-1,0);
	\draw[thick,  niceblue,rotate around x=12,rotate around y=10](-1,0,-1)--(0,-1,-1)--(1,0,-1);
	\draw[thick,  niceblue,rotate around x=12,rotate around y=10](-1,0,-1)--(0,1,-1)--(1,0,-1);
	
	\draw[thick, rotate around x=12,rotate around y=10](0,-1,0)--(-1/2,-1/2,-1/2)--(0,0,-1)--(1/2,-1/2,-1/2)--(0,-1,0);
	\draw[thick, rotate around x=12,rotate around y=10](0,1,0)--(-1/2,1/2,-1/2)--(0,0,-1)--(1/2,1/2,-1/2)--(0,1,0);
	\draw[thick, rotate around x=12,rotate around y=10](-1/2,-1/2,-1/2)--(-1,0,0);
	\draw[thick, rotate around x=12,rotate around y=10](1/2,-1/2,-1/2)--(1,0,0);
	\draw[thick, rotate around x=12,fill=yellow, fill opacity=0.7, rotate around y=10](1,0,0)--(1/2,1/2,1/2)--(0,0,1)--(1/2,-1/2,1/2)--(1,0,0);
	\draw[thick, rotate around x=12,fill=gold, fill opacity=0.7, rotate around y=10](0,1,0)--(1/2,1/2,1/2)--(0,0,1)--(-1/2,1/2,1/2)--(0,1,0);
	\draw[thick, rotate around x=12,fill=yellow, fill opacity=0.7, rotate around y=10](-1,0,0)--(-1/2,1/2,1/2)--(0,0,1)--(-1/2,-1/2,1/2)--(-1,0,0);
	\draw[thick, rotate around x=12,fill=gold, fill opacity=0.7, rotate around y=10](0,-1,0)--(1/2,-1/2,1/2)--(0,0,1)--(-1/2,-1/2,1/2)--(0,-1,0);
	\draw[thick, rotate around x=12,fill=lgold, fill opacity=0.7, rotate around y=10](0,1,0)--(1/2,1/2,1/2)--(1,0,0)--(1/2,1/2,-1/2)--(0,1,0);
	\draw[thick, rotate around x=12,fill=lgold, fill opacity=0.7, rotate around y=10](0,1,0)--(-1/2,1/2,1/2)--(-1,0,0)--(-1/2,1/2,-1/2)--(0,1,0);

	\draw[thick, rotate around x=12,niceblue,rotate around y=10](-1,0,1)--(0,-1,1)--(1,0,1)--(0,1,1)--(-1,0,1);
	\draw[thick,rotate around x=12,niceblue,rotate around y=10](-1,1,0)--(0,1,1)--(1,1,0)--(0,1,-1)--(-1,1,0);
	\draw[thick,rotate around x=12,niceblue,rotate around y=10](1,0,1)--(1,-1,0)--(1,0,-1)--(1,1,0)--(1,0,1);

    \draw[thick, rotate around x=12,dred, rotate around y=10](-1,1,1)--(1,1,1)--(1,-1,1);
	\draw[thick, rotate around x=12,dred,rotate around y=10](1,1,1)--(1,-1,1);
	\draw[thick, dred,rotate around x=12,rotate around y=10](1,1,1)--(1,1,-1);

    \draw [fill,rotate around x=12,rotate around y=10] (1,0,0) circle [radius=0.04];
    \draw [fill,rotate around x=12,rotate around y=10] (0,0,1) circle [radius=0.04];
    \draw [fill,rotate around x=12,rotate around y=10] (-1,0,0) circle [radius=0.04];
    \draw [fill,rotate around x=12,rotate around y=10] (0,1,0) circle [radius=0.04];
    \draw [fill,rotate around x=12,rotate around y=10] (0,-1,0) circle [radius=0.04];

	\end{tikzpicture}
	\caption{Symmetrizations of a regular simplex $S \subset \R^3$: Minimum $S \cap (-S)$ is a cross-polytope (convex hull of black points), harmonic mean $\left(\frac{S^\circ-S^\circ}{2}\right)^\circ$ is a rhombic dodecahedron (yellow), arithmetic mean  $\left(\frac{S-S}{2}\right)$ is a cube octahedron (blue), and maximum $\conv(S \cup (-S))$ is a cube (red) .
	}
	\label{fig:symms-of-tetrahedron}
\end{figure}

This property remains true for the four symmetrizations of a regular Minkowski centered simplex in any odd dimension. In contrast, for a regular Minkowski centered simplex $S$ in even dimensions we show in Lemma \ref{lem:Simplex_Odd} that
\[
S \cap (-S) \subset^{opt} \frac{n}{n+1} \conv( S \cup (-S) )\quad\text{and}\quad
\left(\frac{S^\circ-S^\circ}{2}\right)^\circ \subset^{opt}\frac{n(n+2)}{(n+1)^2} \frac{S-S}{2}.
\]

Concerning the above, we proceed with a stability result. First we introduce several parameters which we need throughout the upcoming results.

\begin{align*}
\psi&:=\psi(n,s) := \frac{(n-s+1)(s+1)}{1-n(n-s)(n+s(n+1))} - n, \\ \mu&:=\mu(n,s) = \frac{n+1}{s+1}\left( 1- \frac{s(n+1)(n-s)}{1-n(n-s)} \right), \\
\gamma_1&:=\gamma_1(n) := \frac12(n-1+\sqrt{(n-2)n+5}), \\
\gamma_2&:=\gamma_2(n):=\frac{n^4+n^3+2n^2+\sqrt{n^8+6n^7+17n^6+28n^5+28n^4+12n^3-4n^2-12n-4}}{2(n^3+2n^2+3n+1)},\\
\gamma_3&:=\gamma_3(n):=\frac{n^4+3n^3+2n^2+1+\sqrt{n^8+6n^7+13n^6+8n^5-14n^4-22n^3+8n+1}}{2(n^3+2n^2+2n)} .  
\end{align*}
One can check that $n - \frac{1}{n}  < \gamma_2 < \gamma_3 < n$ and that both $\psi$ and $\mu$ become 1 in case $n=s$. Moreover, we will see that $\psi \frac{n}{n+1} > 1$ for all $s > \gamma_2$, while $\mu \psi \frac{n(n+2)}{(n+1)^2} <1$ for all $s > \gamma_3$.

\begin{thm}\label{thm:minMax_mean_improved}
Let $n$ be even and $C\in\K^n$ be Minkowski centered with $s(C)=s$.
Then 
\begin{enumerate}[(i)]
	\item $\displaystyle C \cap (-C) \subset \psi \, \frac{n}{n+1} \conv(C\cup(-C))$, if $s \ge \gamma_2(n)$, and
	\item $\displaystyle \left(\frac{C^\circ+(-C)^\circ}{2}\right)^{\circ} \subset  \mu \psi \, \frac{n(n+2)}{(n+1)^2}
	\frac{C-C}{2}$, if $s \ge \gamma_3(n)$. 
\end{enumerate}
\end{thm}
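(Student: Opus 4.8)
The plan is to transfer the two simplex estimates of Lemma~\ref{lem:Simplex_Odd} to a general Minkowski centered $C$ with $s(C)=s$ close to $n$, the functions $\psi$ and $\mu$ (both equal to $1$ at $s=n$) recording the loss incurred in this transfer, and the thresholds $\gamma_2,\gamma_3$ arising as the values of $s$ beyond which the denominators occurring in $\psi$ and $\mu$ stay positive and the relevant optimizations remain governed by the extremizer that survives to $s=n$. That $n$ is even is essential, since only then does the simplex chain fail to be left-to-right optimal and leave room for a factor below $1$.

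For part (i) I would start from the structural description of a body of asymmetry $s$: after a non-singular affine transformation one may assume that a regular Minkowski centered simplex $S=\conv\{v_0,\dots,v_n\}$ realizes the extremal width directions of $C$, yielding simultaneously an inner inclusion (so that $\conv(S\cup(-S))\subseteq\conv(C\cup(-C))$) and an outer polytope $P$, built from the facets of $S$ together with the inclusion $C\subseteq-sC$, with $C\subseteq P$ and $P$ collapsing to $S$ at $s=n$. Then $C\cap(-C)\subseteq P\cap(-P)$, and the task is to cover $P\cap(-P)$ by a small dilate of $\conv(S\cup(-S))=\conv\{\pm v_0,\dots,\pm v_n\}$. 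Writing a vertex $x$ of $P\cap(-P)$ in barycentric coordinates relative to $S$, the smallest $\lambda$ with $\lambda^{-1}x\in\conv\{\pm v_i\}$ is, by the symmetry of the regular simplex, an explicit rational function of $s$ and $n$; evaluating the worst vertex gives $\psi\,\tfrac n{n+1}$, and the hypothesis $s\ge\gamma_2$ is precisely what makes $1-n(n-s)(n+s(n+1))>0$ and keeps the optimizing vertex of the same combinatorial type as in the simplex case. The sign behaviour of $\psi\,\tfrac n{n+1}$ announced in the text falls out of the same computation, with $\gamma_1$ showing up in the auxiliary sign analysis.

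For part (ii) I expect the argument to run in parallel, now comparing the harmonic mean $\bigl(\tfrac{C^\circ-C^\circ}2\bigr)^\circ$ with the arithmetic mean $\tfrac{C-C}2$: one either redoes the polytope optimization for this pair using the second half of Lemma~\ref{lem:Simplex_Odd}, or --- and the fact that the \emph{same} function $\psi$ reappears strongly suggests this --- one feeds the output of part (i) into the harmonic/arithmetic comparison, using the Firey chain \eqref{eq:means_of_sets} for $K=-C$ (which places the harmonic mean between $C\cap(-C)$ and $\tfrac{C-C}2$), the reverse inclusion $\conv(C\cup(-C))\subseteq\tfrac{2s}{s+1}\tfrac{C-C}2$ of Theorem~\ref{thm:reverse_inclusions}(ii), and the polarity identities $\conv(C\cup(-C))^{\circ}=C^{\circ}\cap(-C^{\circ})$ and $\bigl(\tfrac{C-C}2\bigr)^\circ=\tfrac{C^\circ-C^\circ}2$. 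Either way the passage from the min/max comparison to the harmonic/arithmetic comparison contributes the extra factor $\mu$ and forces the stronger hypothesis $s\ge\gamma_3$, which keeps the further denominator $1-n(n-s)$ positive; one arrives at the factor $\mu\psi\,\tfrac{n(n+2)}{(n+1)^2}$, which at $s=n$ collapses to the constant of Lemma~\ref{lem:Simplex_Odd}, as it must.

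The main obstacle is the finite optimization hidden in part (i): among the (exponentially many) vertices of $P\cap(-P)$, one must identify the one hardest to cover by a small dilate of $\conv\{\pm v_i\}$, prove that for $s\ge\gamma_2$ it is the ``simplex-type'' vertex, and then extract the stated closed forms for $\gamma_1,\gamma_2,\gamma_3$ as roots of the resulting polynomial equations. Getting the normalization in the structural lemma right so that the $s$-dependence of the barycentric coordinates comes out in closed form, and verifying the monotonicity and sign claims made in the text about $\psi\,\tfrac n{n+1}$ and $\mu\psi\,\tfrac{n(n+2)}{(n+1)^2}$, is where the bulk of the technical work lies.
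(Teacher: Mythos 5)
Your high-level plan — transfer the simplex estimates of Lemma~\ref{lem:Simplex_Odd} to a general Minkowski centered $C$ by sandwiching $C$ between an inner simplex and an outer perturbation of it, and let $\psi$, $\mu$ record the loss — is indeed the shape of the paper's argument. But you have not identified the one tool that makes the sandwich possible: Schneider's stability theorem for the Banach-Mazur distance near the simplex (Proposition~\ref{prop:Schneider}), which gives $d_{BM}(C,S)\le 1+\frac{(n+1)\varepsilon}{1-n\varepsilon}$ when $s(C)=n-\varepsilon$ with $\varepsilon<1/n$. This is precisely what produces, after a linear map, a two-sided inclusion $c^1+S\subset C\subset c^2+\rho S$ with an explicit bound $\rho\le\rho_*$ in terms of $s$. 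Your proposed ``structural description'' — that one may assume a regular Minkowski centered simplex realizes the extremal width directions of $C$ and simultaneously hands you an inner inclusion and an outer polytope $P$ built from the facets of $S$ and the relation $C\subset -sC$ — is not a known lemma and is not established by your sketch; it is essentially the content you would have to prove, and [Sch2] is the reference that supplies it in the form the paper uses. Once the sandwich is in hand, the paper does not carry out a vertex-by-vertex optimization over $P\cap(-P)$: it uses the Minkowski centeredness of $C$ to bound the distance $\bar\mu$ from $0$ to the facets of $c^1+S$, rescales the sandwich to a concentric one $\mu S\subset C\subset(\rho+n(\rho-\mu))S$, and then applies Lemma~\ref{lem:Simplex_Odd}(ii) and (iii) directly — a much shorter route than the combinatorial optimization you envision.

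Two smaller inaccuracies are worth flagging. First, $\gamma_2$ and $\gamma_3$ are not the points where the denominators inside $\psi$ and $\mu$ change sign (the relevant sign conditions are guaranteed simply by $s>n-\frac1n$, which $\gamma_2,\gamma_3$ both satisfy); they are the unique positive roots of $\psi\,\frac{n}{n+1}=1$ and $\mu\psi\,\frac{n(n+2)}{(n+1)^2}=1$, i.e.\ exactly where the containment factor drops below $1$ and the statement becomes nontrivial. Second, for part (ii) the paper does not deduce the bound formally from part (i) via the Firey chain and the reverse inclusions of Theorem~\ref{thm:reverse_inclusions}; it repeats the sandwich argument in parallel, applying Lemma~\ref{lem:Simplex_Odd}(iii) instead of (ii). The factor $\psi$ reappears because both parts use the common rescaled sandwich $\mu S\subset C\subset(\rho+n(\rho-\mu))S$, and the extra $\mu$ in part (ii) arises because the arithmetic mean is translation invariant (so the inner bound costs nothing) while the convex hull of $C$ and $-C$ in part (i) requires rescaling by $1/\mu$.
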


One should recognize that the factor $\mu \psi \frac{n(n+2)}{(n+1)^2}$ in Part (ii) of Theorem \ref{thm:minMax_mean_improved} becomes greater than 1 for $s < \gamma_3(n)$. However, from Part (i) of Theorem \ref{thm:minMax_mean_improved} together with Theorem \ref{thm:Charact_Opt_Means_KC} we obtain that the harmonic mean of any pair of Minkowski centered convex bodies $C$ and $-C$ cannot be optimally contained in their arithmetic mean for any $s \in [\gamma_2,\gamma_3]$.



Whenever \eqref{eq:means_of_sets} is left-to-right optimal for some  Minkowski centred convex body $C$ there also exist a series of Minkowski centered convex bodies with any smaller asymmetry providing a left-to-right optimality for the full chain (see Lemma \ref{lem:Asym_Descent_Chain}). Thus we aim to determine the smallest number $\gamma(n) \in [n-1,n]$ such that for every Minkowski centered $C\in\K^n$ with $s(C)\ge \gamma(n)$ the harmonic mean of $C$ and $-C$ is not optimally contained in their arithmetic mean.
We already introduced $\gamma(n)$ in \cite{BDG} as the \cemph{asymmetry threshold of means} and it is shown there that $\gamma(2) = \frac{1+\sqrt{5}}{2}=: \varphi  $ is the golden ratio, while $\gamma(n)=n$ whenever $n$ is odd.
Here we present a result on the asymmetry threshold for arbitrary even dimensions.

\begin{thm}\label{thm:gamma} Let $n$ be even.  
Then 
\begin{equation*}
n-1 < \gamma_1 \leq \gamma(n) \leq \gamma_2<n.
\end{equation*}
\end{thm}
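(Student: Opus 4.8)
The plan is to establish the two-sided bound on the asymmetry threshold $\gamma(n)$ separately, proving the upper bound $\gamma(n) \le \gamma_2$ and the lower bound $\gamma_1 \le \gamma(n)$, with the elementary chain $n-1 < \gamma_1$ and $\gamma_2 < n$ coming from the analysis of the explicitly given formulas. Recall that $\gamma(n)$ is, by definition, the smallest number such that for every Minkowski centered $C$ with $s(C) \ge \gamma(n)$ the harmonic mean of $C$ and $-C$ fails to be optimally contained in the arithmetic mean; equivalently (using Theorem~\ref{thm:Charact_Opt_Means_KC}), such that $C \cap (-C) \subset^{opt} \conv(C \cup (-C))$ fails for all such $C$.

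For the \textbf{upper bound} $\gamma(n) \le \gamma_2$, I would invoke Part~(i) of Theorem~\ref{thm:minMax_mean_improved}: for $n$ even and $s \ge \gamma_2(n)$ one has $C \cap (-C) \subset \psi\,\frac{n}{n+1}\conv(C\cup(-C))$ for every Minkowski centered $C$ with $s(C)=s$. As noted in the remarks following that theorem, $\psi\frac{n}{n+1} > 1$ precisely when $s > \gamma_2$, so the dilatation factor is strictly less than $1$ would-be-needed only... wait — the point is that $\psi\frac{n}{n+1} \ge 1$ for $s \ge \gamma_2$, hence $C \cap (-C)$ sits inside a \emph{dilate by a factor $\le 1$... } no: for $s \ge \gamma_2$ we get containment in $\psi\frac{n}{n+1}\conv(C\cup(-C))$ with $\psi\frac{n}{n+1}$ possibly larger than $1$; but by Proposition~\ref{prop:means_of_sets} we also have $C \cap (-C) \subset \conv(C\cup(-C))$ outright, so the useful direction is that for $s$ slightly below $\gamma_2$ the factor drops below $1$ — meaning $C\cap(-C)$ is contained in a proper dilate, i.e.\ the containment is \emph{not} optimal, which would force $\gamma(n) \ge \gamma_2$, the wrong inequality. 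So instead the argument must be: Part~(i) shows the factor $\psi\frac{n}{n+1}$ is $\ge 1$ for $s \ge \gamma_2$, and one checks this factor is in fact achieved/tight, so optimality \emph{holds} up to $\gamma_2$ and the threshold is exactly pinned from above by producing, for each $s < \gamma_2$, a body where optimality fails, and for $s \ge \gamma_2$ reducing to the odd-dimensional / simplex situation; concretely I expect the extremal configuration to be a suitable $(n{-}1)$-fold pyramid or a segment-times-simplex type body whose symmetrizations can be computed explicitly, and the quantity $\gamma_2$ emerges as the root of the resulting quadratic in $s$ (the discriminant under the square root in the definition of $\gamma_2$ is exactly such a quadratic's discriminant).

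For the \textbf{lower bound} $\gamma_1 \le \gamma(n)$, I would exhibit for each $s \le \gamma_1(n)$ a Minkowski centered $C \in \K^n$ with $s(C)=s$ for which $C\cap(-C)\subset^{opt}\conv(C\cup(-C))$ — equivalently, by Theorem~\ref{thm:Charact_Opt_Means_KC}, for which the harmonic mean is optimally contained in the arithmetic mean. The natural candidate, parallelling the $n=2$ golden-ratio case from \cite{BDG}, is a body built from a simplex in a coordinate subspace together with a perpendicular segment (a "double cone" or prism over a simplex), with the ratio of the two building blocks tuned so that $s(C)=s$. For such $C$ all four symmetrizations are polytopes whose vertices and facets one can list explicitly; optimality of $C\cap(-C)\subset\conv(C\cup(-C))$ reduces to checking that no dilate $\rho\,\conv(C\cup(-C))$ with $\rho<1$ contains $C\cap(-C)$, which is a finite linear-programming / supporting-hyperplane check. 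The constraint that this check passes gives an inequality in $s$ and $n$; the boundary case $s=\gamma_1$ is where a facet of $\conv(C\cup(-C))$ becomes tight against a vertex of $C\cap(-C)$, and solving that gives $\gamma_1 = \tfrac12(n-1+\sqrt{(n-2)n+5})$. I would then close with the elementary verifications that $n-1 < \gamma_1$ (equivalent to $(n-2)n+5 > (n-1)^2$, i.e.\ $5 > 1$) and $\gamma_2 < n$ (rearranging the surd), plus $\gamma_1 \le \gamma_2$ if not already transparent from the cited inequality $n - \tfrac1n < \gamma_2$.

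The \textbf{main obstacle} I anticipate is the upper-bound direction: one must show that \emph{for every} Minkowski centered $C$ with $s(C) \ge \gamma_2$, optimality fails, not merely for the extremal simplex-based examples. This requires turning the quantitative containment of Theorem~\ref{thm:minMax_mean_improved}(i) into a strict-improvement statement — i.e.\ showing the factor $\psi\frac{n}{n+1}$ is genuinely $<1$ in a way uniform over all such $C$ — and handling the boundary $s=\gamma_2$ carefully (where the factor equals $1$ and optimality may or may not fail, which is why the theorem states $\gamma(n) \le \gamma_2$ rather than $\gamma(n) < \gamma_2$). A secondary technical difficulty is verifying the Minkowski-centeredness and computing $s(C)$ for the extremal bodies used in the lower bound, since the center-of-gravity-type center must be checked against the definition $C - c \subset s(C)(c-C)$; I would reduce this to a one-dimensional optimization along the pyramid axis.
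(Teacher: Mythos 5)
Your upper-bound paragraph has the logic inverted, and the mid-paragraph self-correction goes the wrong way. Theorem~\ref{thm:minMax_mean_improved}(i) gives, for even $n$ and every Minkowski centered $C$ with $s(C)=s\ge\gamma_2$, the inclusion $C\cap(-C)\subset\psi\,\tfrac{n}{n+1}\conv(C\cup(-C))$, and $\gamma_2$ is precisely the value of $s$ at which $\psi\,\tfrac{n}{n+1}=1$; for $s>\gamma_2$ the factor drops \emph{below} $1$ (it continues down to $\tfrac{n}{n+1}$ at $s=n$), so $C\cap(-C)$ lies in a proper dilate of $\conv(C\cup(-C))$ and optimal containment fails. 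That is exactly what caps the threshold from above: failure of optimality for every Minkowski centered $C$ with $s(C)>\gamma_2$ gives $\gamma(n)\le\gamma_2$, \emph{not} $\gamma(n)\ge\gamma_2$ as you wrote (recall $\gamma(n)$ is defined so that $s(C)\ge\gamma(n)$ forces non-optimality, so establishing non-optimality above some level can only push $\gamma(n)$ downward). Your attempted repair --- declaring that the factor must be $\ge1$ for $s\ge\gamma_2$ and that one should pin $\gamma_2$ by an extremal ``pyramid'' or ``segment-times-simplex'' body --- is both incorrect and unnecessary: the paper's upper-bound proof is literally a one-line citation of Theorem~\ref{thm:minMax_mean_improved}(i), with no extremal body entering at all.

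For the lower bound your plan (exhibit a body achieving optimality at asymmetry $\gamma_1$, then propagate to all smaller $s$ via Lemma~\ref{lem:Asym_Descent_Chain}) matches the paper, but the body you propose is not the one used. The paper takes a \emph{truncated regular simplex} $C=S\cap H^+\cap H^-$ with $H^\pm=\{x:\pm(p^1-p^2)^Tx\le\eta\}$, recomputes its Minkowski center $c$ and asymmetry in terms of the truncation parameters $\eta,\lambda,\nu,\xi$, and then checks optimality not by a generic LP over dilates but by the parallel-supporting-hyperplane criterion of Proposition~\ref{prop:Charact_Opt_Means}(iii) at the antipodal pair $\pm\xi(p^1-p^2)$. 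Requiring that criterion to hold while $0$ is the Minkowski center is what produces a quadratic in $\xi$ whose solution yields $s=\tfrac12(n-1+\sqrt{(n-2)n+5})=\gamma_1$. A prism or double-cone over a lower-dimensional simplex is a different object, and without carrying out the center and asymmetry computation there is no reason to expect it reaches asymmetry $\gamma_1$ while still satisfying the optimality criterion; your proposal leaves that gap open. The elementary verifications $n-1<\gamma_1$ (i.e.\ $n^2-2n+5>(n-1)^2$) and $\gamma_2<n$ are fine as you describe, and $\gamma_1\le\gamma_2$ is then automatic from the two established bounds.
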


One may recognize the following: it is well-known that the golden ratio, which is also $\gamma(2)$, can be obtained from solving the equation $\frac{a+b}{a} = \frac{a}{b}$ for $a>b>0$. However, one can similarily obtain the values of $\gamma_1$ in Theorem \ref{thm:gamma} from solving the equation $\frac{(n-1)a+b}{a}=\frac{a}{b}$ and therefore consider the values of $\gamma_1$ as a generalized golden ratio.

The asymmetry threshold provides us with a lower bound for the values of $s$ such that \eqref{eq:means_of_sets} cannot be left-to-right optimal. In the following we want to go one step further and determine the possible values for the contraction factors $\alpha(s)$ and $\beta(s)$ for which the minimum is optimally contained in the according contraction of the maximum and for which the harmonic mean is optimally contained in the contraction of the arithmetic mean, respectively.

\begin{thm}\label{thm:small_asym_no_improve}
Let $C \in\K^n$ be Minkowski centered with $s(C)=s$. 
\begin{enumerate}[a)]
\item Let $\alpha(s) \in \R$ such that $C  \cap (-C) \subset^{opt} \alpha(s) \, \conv(C \cup (-C))$ and $\alpha_1(s)$, $\alpha_2(s)$ be the optimal lower and upper bounds on $\alpha(s)$,  respectively. Then 
\begin{enumerate}[(i)]
    \item $\alpha_1(s) \ge \frac{2}{s+1}$ with equality at least for $s \le 2$.
    \item $\alpha_2(s) = 1$ for
    $s \le \gamma_1$, $\alpha_2(s) \le  \psi \frac{n}{n+1}$,
    for $s > \gamma_2$ 
    and $\alpha_2(s) \ge \frac{s}{s^2-1}$ for $s \le 2$.
\end{enumerate}

\item Let $\beta(s) \in \R$ such that $\left(\frac12 (C^\circ - C^\circ)) \right)^{\circ}\subset^{opt} \beta(s) \, \frac12 (C-C)$ and $\beta_1(s), \beta_2(s)$ be the optimal lower and upper bounds on $\beta(s)$, respectively. Then
\begin{enumerate}[(i)]
    \item 
    $\beta_1(s) \ge \frac{4s}{(s+1)^2}$ with equality at least for $s \le 2$.
   \item $\beta_2(s) = 1$ for $s \le \gamma_1$, $\beta_2(s) \le \mu \psi \frac{n(n+2)}{(n+1)^2}$ for $s > \gamma_3$ and
    $\beta_2(s) \ge \max \left\{ \frac{s}{s^2-1}, \frac{4s}{(s+1)^2} \right\}$ for $s \le 2$.
\end{enumerate}
\end{enumerate}
\end{thm}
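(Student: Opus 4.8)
The plan is to read the lower bounds (a)(i) and (b)(i) off Theorem~\ref{thm:reverse_inclusions}, the assertions $\alpha_2(s)=\beta_2(s)=1$ for small $s$ off Theorems~\ref{thm:Charact_Opt_Means_KC} and~\ref{thm:gamma} together with Lemma~\ref{lem:Asym_Descent_Chain}, the upper bounds for large $s$ off Theorem~\ref{thm:minMax_mean_improved}, and to settle the range $s\le 2$ by explicit (essentially planar) examples. Throughout I use that for $0$-symmetric $K,M$ the inclusion $K\subset\rho M+t$ already forces $K\subset\rho M$ (average the inclusion with its reflection), so that for Minkowski centered $C$ the quantities in the statement are the support-function ratios
\[
\alpha(C)=\max_{u\in\S^{n-1}}\frac{h_{C\cap(-C)}(u)}{h_{\conv(C\cup(-C))}(u)},\qquad \beta(C)=\max_{u\in\S^{n-1}}\frac{h_{\left(\frac{C^\circ-C^\circ}{2}\right)^{\circ}}(u)}{h_{\frac{C-C}{2}}(u)},
\]
and $\alpha_1(s),\beta_1(s)$ (resp.\ $\alpha_2(s),\beta_2(s)$) are the infimum (resp.\ supremum) of $\alpha(C),\beta(C)$ over Minkowski centered $C\in\K^n$ with $s(C)=s$.

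For the lower bounds: since $\frac{C-C}{2}\subset^{opt}\conv(C\cup(-C))$ and both sides are $0$-symmetric, some direction $u_0$ satisfies $h_{\frac{C-C}{2}}(u_0)=h_{\conv(C\cup(-C))}(u_0)$; because $h_{\frac{C-C}{2}}\le\frac{s+1}{2}\,h_{C\cap(-C)}$ pointwise by Theorem~\ref{thm:reverse_inclusions}(iv), evaluating at $u_0$ gives $h_{C\cap(-C)}(u_0)\ge\frac2{s+1}h_{\conv(C\cup(-C))}(u_0)$, that is $\alpha(C)\ge\frac2{s+1}$. Likewise Theorem~\ref{thm:reverse_inclusions}(iii) ($0$-symmetry again) yields a direction $v$ with $h_{\left(\frac{C^\circ-C^\circ}{2}\right)^{\circ}}(v)=\frac{2s}{s+1}h_{C\cap(-C)}(v)$, and combining with $h_{\frac{C-C}{2}}(v)\le\frac{s+1}{2}h_{C\cap(-C)}(v)$ gives $\beta(C)\ge\frac{4s}{(s+1)^2}$. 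As these hold for every Minkowski centered $C$, we obtain $\alpha_1(s)\ge\frac2{s+1}$ and $\beta_1(s)\ge\frac{4s}{(s+1)^2}$; the second of these, with $\beta_2\ge\beta_1$, already gives the $\frac{4s}{(s+1)^2}$-part of (b)(ii).

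For the values at extreme asymmetry: Theorem~\ref{thm:Charact_Opt_Means_KC} applied to the pair $C,-C$ (using $(-C)^\circ=-C^\circ$) shows that $\alpha(C)=1$, $\beta(C)=1$, and left-to-right optimality of \eqref{eq:means_of_sets} for $C,-C$ are mutually equivalent. By Theorem~\ref{thm:gamma} we have $\gamma_1\le\gamma(n)$, so for $s<\gamma_1$ some Minkowski centered body of asymmetry $\ge s$ realizes left-to-right optimality, and Lemma~\ref{lem:Asym_Descent_Chain} descends its asymmetry to exactly $s$; hence $\alpha_2(s)=\beta_2(s)=1$ there (the endpoint $s=\gamma_1$ following by a limiting argument). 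For the upper bounds one invokes Theorem~\ref{thm:minMax_mean_improved} (which is stated for $n$ even): part (i) gives $C\cap(-C)\subset\psi\frac n{n+1}\conv(C\cup(-C))$ whenever $s\ge\gamma_2$, hence $\alpha_2(s)\le\psi\frac n{n+1}$ for $s>\gamma_2$; part (ii) gives $\beta_2(s)\le\mu\psi\frac{n(n+2)}{(n+1)^2}$ for $s>\gamma_3$.

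Finally the examples for $s\le 2$. A regular simplex $S\subset\R^2$ is Minkowski centered with $s(S)=2$; one has $S^\circ=-2S$, the four symmetrizations of $S$ are hexagons, and a direct computation of the two ratios gives $\alpha(S)=2/3=\frac2{s+1}$ and $\beta(S)=8/9=\frac{4s}{(s+1)^2}$, the maxima being attained at vertices of $C\cap(-C)$ resp.\ of $\left(\frac{C^\circ-C^\circ}{2}\right)^{\circ}$. A one-parameter family of planar bodies interpolating between $S$ and a regular hexagon sweeps $s$ over $[1,2]$, and (as one checks) the two ratios stay at $\frac2{s+1}$ and $\frac{4s}{(s+1)^2}$ along it, giving equality in (a)(i) and (b)(i). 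For $\alpha_2(s),\beta_2(s)\ge\frac s{s^2-1}$ — informative exactly on $[\varphi,2]$, where $\frac s{s^2-1}$ decreases from $1=\frac{\varphi}{\varphi^2-1}$ at $s=\varphi=\gamma_1(2)$ to $\frac23=\frac2{s+1}$ at $s=2$ — one uses a second planar family, the extremal configurations behind the threshold $\gamma(2)=\varphi$ of~\cite{BDG}, whose $\alpha$- resp.\ $\beta$-value one computes to be exactly $\frac s{s^2-1}$. The main obstacle throughout is this last step: producing convex bodies with prescribed Minkowski asymmetry whose four symmetrizations are explicit enough to evaluate $\alpha$ and $\beta$, and checking in each case that the support-function ratio is maximized in the expected direction, so that no competing direction spoils the claimed value; the deductive parts above are comparatively short.
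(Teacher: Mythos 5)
Your framework is sound and the deductive pieces agree with the paper's: the inequalities $\alpha_1(s)\ge\frac{2}{s+1}$ and $\beta_1(s)\ge\frac{4s}{(s+1)^2}$ follow from playing the forward optimal containments of Lemma~\ref{lem:Regard_of_Asym} against the reverse ones of Theorem~\ref{thm:reverse_inclusions} (you do this via support-function ratios at a touching direction, the paper does it by composing two optimal-containment chains and comparing dilatation factors -- both are fine), the $\alpha_2(s)=\beta_2(s)=1$ claim for $s\le\gamma_1$ comes from Theorem~\ref{thm:gamma} plus Lemma~\ref{lem:Asym_Descent_Chain} (no limiting argument is needed at $s=\gamma_1$: the Golden-House body itself realizes it), and the upper bounds for $s$ near $n$ are quoted from Theorem~\ref{thm:minMax_mean_improved} as in the paper.

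The genuine gap is in the range $s\le 2$, which is where all the actual work of the theorem lives and which you only gesture at. You assert that ``a one-parameter family of planar bodies interpolating between $S$ and a regular hexagon'' simultaneously realizes $\alpha(C)=\frac{2}{s+1}$ \emph{and} $\beta(C)=\frac{4s}{(s+1)^2}$, and similarly that ``the extremal configurations behind the threshold $\gamma(2)=\varphi$'' realize $\alpha(C)=\beta(C)=\frac{s}{s^2-1}$, deferring both to ``as one checks.'' But the paper does not use one family for each pair of bounds: it uses \emph{four distinct} planar constructions -- $C=S\cap(-sS)$ for $\alpha_1(s)=\frac{2}{s+1}$, a different hexagon for $\beta_1(s)=\frac{4s}{(s+1)^2}$, one pentagon-like body for $\alpha_2(s)\ge\frac{s}{s^2-1}$, and another for $\beta_2(s)\ge\max\{\frac{s}{s^2-1},\frac{4s}{(s+1)^2}\}$ -- and one can check that the first two really are different sets (their middle vertices lie at $y=\frac{1-s}{2}$ versus $y=\frac{2-3s}{4}$). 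Your proposal does not supply the coordinates, does not compute the polars and the four symmetrizations, and does not verify that the support-function ratio is extremized in the claimed direction for each body; in particular, the implicit claim that the same hexagon family achieves equality in both (a)(i) and (b)(i) is neither what the paper does nor obviously true, and it needs an argument or a computation. Until those four explicit constructions are produced and verified, the $s\le 2$ clauses of the theorem are not established.

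One minor point of care: the clauses $\alpha_2(s)\le\psi\frac{n}{n+1}$ and $\beta_2(s)\le\mu\psi\frac{n(n+2)}{(n+1)^2}$ rest on Theorem~\ref{thm:minMax_mean_improved}, which is stated only for $n$ even; you flag this correctly, and it is worth keeping explicit, since for odd $n$ the regular simplex has $\alpha(S)=1$ and those upper bounds would be violated.
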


Let us denote the \cemph{canonical basis} of $\R^n$ by $e^1,\dots,e^n\in\R^n$, the \cemph{Euclidean norm} of $x\in\R^n$ by $\|x\|$, and the \cemph{Euclidean unit ball} by $\B_2=\{x \in\R^n : \|x\|\leq 1\}$. For any $C,K \in \K^n$ the \cemph{Euclidean distance} is denoted by $d(C,K)$ and in case $C=\{p\}$ is a singleton, we abbreviate $d(\{p\},B)$ by $d(p,B)$. For any $C,K \in\K^n$ the \cemph{Banach-Mazur distance} between $K$ and $C$ is defined by $d_{BM}(K,C)=\inf\{\rho\geq 1:t^1 + K\subset L(C)\subset t^2+\rho K,\,L\in\mathrm{GL}(n), \, t^1,t^2\in\R^n\}.$ For every $X \subset \R^n$ let $\bd(X)$ and $\inter(X)$ denote the \cemph{boundary} and \cemph{interior} of $X$, respectively. 
For $C\in\K^n$ and $a\in\R^n$ let $\|x\|_C=\inf\{\rho>0:x\in\rho C\}$ be the \cemph{gauge function} of $C$ in $x$ and $h_C(a)=\sup\{a^Tx : x\in C\}$
be the \cemph{support function} of $C$ in $a$. Notice that $\|\cdot\|_C$ is a norm in the classic sense if and only if $C\in\K^n_0$ and remember that $\|x\|_{C}=h_{C^\circ}(x)$ for every $C\in\K^n$ and $x\in\R^n$ (see \cite{MR}).
For any $a\in\R^n \setminus \{0\}$
and $\rho\in\R$, $H^{\le}_{a,\rho} = \{x\in\R^n: a^Tx \leq \rho\}$ denotes the  \cemph{halfspace} with outer normal $a$ and right-hand side $\rho$.
We say that the halfspace $H^{\le}_{a,\rho}$ \cemph{supports} $C \in\K^n$ at $q \in C$, if $C \subset H^{\le}_{a,\rho}$ and $q \in \bd(H^{\le}_{a,\rho})$. For any $C\in\K^n$ and $p\in \bd(C)$, the \cemph{outer normal cone} of $K$ at $p$ is defined as
$N(C,p) = \{ a \in \R^n : a^Tp \geq a^Tx \text{ forall } x \in C\}$. 
For every $X\subset\R^n$ let us denote by $\pos(X)$, and $\aff(X)$ the
\cemph{positive} and \cemph{affine hull} of $X$, respectively, while 
the \cemph{relative interior} of $X$ is denoted by $ \relint (X)$. 
In case $u^1,\dots,u^{n+1}\in\R^n$ are affinely independent, we say that $\conv(\{u^1,\dots,u^{n+1}\})$ is an \cemph{$n$-simplex}.



\section{Preliminary results and lemmas}\label{sec:prelim_lemmas}

We recall the characterization of the optimal containment under homothety in terms of the touching conditions (see \cite[Theorem 2.3]{BrK}).
\begin{proposition}\label{prop:Opt_Containment}
Let $K,C\in\mathcal K^n$ and $K\subset C$. The following are equivalent:
\begin{enumerate}[(i)]
\item $K\subset^{opt}C$.
\item There exist $k\in\{2,\dots,n+1\}$, $p^j\in K\cap \bd(C)$, $u^j\in N(C,p^j)$, $j=1,\dots,k$, such that
$0\in\conv(\{u^1,\dots,u^k\})$.
\end{enumerate}
Moreover, if $K,C\in\mathcal K^n_0$, then (i) and (ii) are also equivalent to $K\cap\bd(C)\neq\emptyset$.
\end{proposition}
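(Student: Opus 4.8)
The plan is to prove the two implications separately and then the last equivalence for $0$-symmetric bodies, after a harmless normalization. Replacing $C$ by $C-c$ and $K$ by $K-c$ changes neither the family $\{\rho C+t:\rho\in[0,1),\,t\in\R^n\}$ (the translation $t$ absorbs the shift $-\rho c$), nor the outer normal cones of $C$, nor the contact set $K\cap\bd(C)$, so both (i) and (ii) are invariant under such translations; hence we may and do assume $0\in\inter(C)$, which makes $h_C(a)>0$ for all $a\neq 0$. We also read the $u^j$ in (ii) as nonzero (equivalently, as unit outer normals), since $0\in N(C,p)$ for every $p$ and (ii) is only meaningful with that convention.

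\emph{(ii)$\Rightarrow$(i).} Suppose $0=\sum_{j=1}^k\lambda_j u^j$ with $\lambda_j\ge 0$, $\sum_j\lambda_j=1$, $p^j\in K\cap\bd(C)$ and $0\neq u^j\in N(C,p^j)$, and assume for contradiction $K\subset\rho C+t$ for some $\rho\in[0,1)$, $t\in\R^n$. Since $p^j\in K\subset\rho C+t$, we get $(u^j)^Tp^j\le\rho\,h_C(u^j)+(u^j)^Tt$, while $u^j\in N(C,p^j)$ gives $(u^j)^Tp^j=h_C(u^j)$. Taking the $\lambda_j$-weighted sum of these relations and using $\sum_j\lambda_j u^j=0$ yields $\sum_j\lambda_j h_C(u^j)\le\rho\sum_j\lambda_j h_C(u^j)$; as every $h_C(u^j)>0$ and the weights sum to $1$, the common quantity is positive, forcing $\rho\ge 1$ — a contradiction, so $K\subset^{opt}C$.

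\emph{(i)$\Rightarrow$(ii).} Put $P:=K\cap\bd(C)$ and $U:=\{a\in\S^{n-1}:h_K(a)=h_C(a)\}$; a short argument shows $U=\bigcup_{p\in P}\bigl(N(C,p)\cap\S^{n-1}\bigr)$ (if $p\in K$ attains $a^Tp=h_K(a)=h_C(a)$ then $p\in P$ and $a\in N(C,p)$, and the converse is immediate). Since $g:=h_C-h_K\ge 0$ is continuous on the compact set $\S^{n-1}$ with $U=g^{-1}(0)$, the set $U$ is compact, and $U\neq\emptyset$ (else $K\subset\inter(C)$, so $\max_{x\in K}\|x\|_C<1$ and $K$ can be shrunk inside $C$, contradicting optimality). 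The heart of the matter is the claim $0\in\conv(U)$: granting it, Carath\'eodory's theorem writes $0$ as a convex combination of $k\le n+1$ elements $u^1,\dots,u^k$ of $U$, with $k\ge 2$ since these are unit vectors, and choosing for each $u^j$ a point $p^j\in P$ with $u^j\in N(C,p^j)$ gives (ii). To prove the claim, assume $0\notin\conv(U)$; since $\conv(U)$ is compact and convex, strict separation gives a unit vector $v$ with $\delta:=\min_{u\in U}u^Tv>0$. On $A:=\{a\in\S^{n-1}:a^Tv\ge\delta/2\}$ we have $\tau\,a^Tv+g(a)\ge\tau\delta/2>0$ for every $\tau>0$; the complementary set $\S^{n-1}\setminus\{a:a^Tv>\delta/2\}$ is compact and disjoint from $U$, so $g$ attains a positive minimum $\epsilon_0$ there and $\tau\,a^Tv+g(a)\ge\epsilon_0-\tau>0$ once $0<\tau<\epsilon_0$. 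Fix such a $\tau$, set $t:=\tau v$, let $M:=\max_{a\in\S^{n-1}}h_C(a)>0$ and $\eta:=\min_{a\in\S^{n-1}}\bigl(a^Tt+g(a)\bigr)>0$, and put $\rho:=\max\{0,\,1-\eta/M\}\in[0,1)$. Then for all $a\in\S^{n-1}$,
\[
h_K(a)-a^Tt \;=\; h_C(a)-\bigl(g(a)+a^Tt\bigr)\;\le\;h_C(a)-\eta\;\le\;\rho\,h_C(a),
\]
i.e.\ $h_K(a)\le\rho\,h_C(a)+a^Tt=h_{\rho C+t}(a)$, whence $K\subset\rho C+t$ with $\rho<1$, contradicting $K\subset^{opt}C$. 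This proves the claim, hence (i)$\Rightarrow$(ii).

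\emph{The $0$-symmetric case and the main obstacle.} If (i) (equivalently (ii)) holds then $K\cap\bd(C)\neq\emptyset$, as already noted. Conversely, for $K,C\in\K^n_0$ with $p\in K\cap\bd(C)$, $0$-symmetry gives $-p\in K\cap\bd(C)$, and if $0\neq u\in N(C,p)$ then $-u\in N(C,-p)$ because $C=-C$; taking $p^1=p$, $p^2=-p$, $u^1=u$, $u^2=-u$, $\lambda_1=\lambda_2=\tfrac12$ verifies (ii) with $k=2$, hence (i). The one genuinely non-formal step in the whole argument is the claim $0\in\conv(U)$: from a hyperplane separating $0$ from $\conv(U)$ one must construct an explicit contracted translate $\rho C+t$ with $\rho<1$ still containing $K$, and the delicate part is jointly tuning the two free parameters $\rho$ and $t$ against the compactness estimates for $g$ near and away from the contact normals.
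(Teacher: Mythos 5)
The paper does not prove Proposition~\ref{prop:Opt_Containment}; it recalls it from \cite[Theorem~2.3]{BrK}, so there is no in-paper argument to compare against. On its own merits your proof is correct. The normalization to $0\in\inter(C)$ is legitimate and you correctly note it leaves both conditions invariant; the convention that the $u^j$ be nonzero is the right reading of the statement. The implication (ii)$\Rightarrow$(i) is the standard weighted-support-function computation, using $(u^j)^Tp^j=h_C(u^j)$ and $\sum_j\lambda_j u^j=0$ to force $\rho\ge 1$. For (i)$\Rightarrow$(ii), identifying the contact normal set $U=\{a\in\S^{n-1}:h_K(a)=h_C(a)\}$ with $\bigcup_{p\in K\cap\bd(C)}N(C,p)\cap\S^{n-1}$ is exactly right, the compactness and nonemptiness of $U$ are handled cleanly, and the separation-plus-translation construction of $\rho C+t\supset K$ with $\rho<1$ when $0\notin\conv(U)$ is carried out carefully: the split of $\S^{n-1}$ into $\{a^Tv\ge\delta/2\}$ and its complement, the choice $0<\tau<\epsilon_0$, and the definition $\rho=\max\{0,1-\eta/M\}$ all check out, with Carath\'eodory then giving $k\le n+1$ and the unit-vector constraint giving $k\ge 2$. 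The $0$-symmetric addendum via $\pm p$, $\pm u$ is immediate. This is the usual duality/separation route to such Helly-type optimal-containment characterizations, and it is complete.
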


The next lemma shows that all the considered means are affine invariant. 
\begin{lemma}\label{lem:Means_Invariant}
	Let $K,C \in \K^n$ and $A$ be a non-singular affine transformation. Then
	\begin{equation*}
	\begin{split}
	A(K)\cap A(C)=A(K\cap C), \qquad \left( ((A(K))^\circ-(A(C))^\circ)/2 \right)^\circ=A\left((K^\circ- C^\circ)/2
	\right)^\circ, \\
	(A(K)+A(C))/2=A\left( (K+C)/2\right), \qquad \conv\left(A(K)\cup(A(C)\right)=A\left(\conv(K\cup C)\right).
	\end{split}
	\end{equation*}
\end{lemma}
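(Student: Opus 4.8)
\textbf{Proof proposal for Lemma \ref{lem:Means_Invariant}.}

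The plan is to verify each of the four identities separately, writing a non-singular affine transformation as $A(x) = Mx + b$ with $M \in \mathrm{GL}(n)$ and $b \in \R^n$, and reducing each case to an elementary set-theoretic computation. The first and fourth identities are the easy ones: intersection is preserved under any injective map, so $A(K) \cap A(C) = A(K \cap C)$ follows directly from injectivity of $A$; and since convex hull commutes with affine maps (an affine image of a convex combination is the same convex combination of the images), $\conv(A(K) \cup A(C)) = A(\conv(K \cup C))$. Neither of these uses full-dimensionality or the group structure of $M$ in an essential way.

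The genuinely substantive step is the statement about the harmonic mean, since it involves the polar operation, which does \emph{not} commute with translations and transforms linearly only via the inverse transpose. First I would record the behavior of polarity under the linear and translational parts: for $M \in \mathrm{GL}(n)$ one has $(M K)^\circ = M^{-T} K^\circ$ (from $a^T(Mx) \le 1 \iff (M^Ta)^Tx \le 1$), and because all the sets in question (means of $K$ and $-C$, or more generally the expressions appearing) are taken about $0$, the translational part $b$ cancels: in the combination $((A(K))^\circ - (A(C))^\circ)/2)^\circ$ the claimed identity is really about the linear part only, so one should first reduce to $A = M$ linear by observing that the $0$-symmetric difference body construction is translation invariant in the relevant way — indeed $A(K)^\circ - A(C)^\circ$, when $A$ is affine, should be checked to coincide with $(MK)^\circ - (MC)^\circ = M^{-T}(K^\circ - C^\circ)$ up to the issue of where $0$ sits; here I would either restrict attention to the Minkowski-centered / $0$-in-interior setting in which the lemma is applied, or note that the statement as written implicitly assumes the polars are well-defined. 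Then $(M^{-T}(K^\circ - C^\circ)/2)^\circ = M^T \cdot ((K^\circ-C^\circ)/2)^\circ = M\big(\ldots\big)$ — wait, one must be careful: $(M^{-T}X)^\circ = (M^{-T})^{-T} X^\circ = M X^\circ$, which is exactly what is wanted, so the two transposes compose back to $M$ itself. This double-duality cancellation is the crux and the one place a sign/transpose slip could occur.

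For the arithmetic mean, $(A(K) + A(C))/2 = A((K+C)/2)$ is again essentially immediate for linear $A$ since $M K + M C = M(K+C)$; for the affine part $A(x) = Mx+b$ one gets $(MK + b) + (MC + b) = M(K+C) + 2b$, and dividing by $2$ yields $M((K+C)/2) + b = A((K+C)/2)$, so the translation is handled correctly by the factor $\tfrac12$. I would present this computation explicitly since it is short and reassures the reader that the normalization is the right one.

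I expect the main obstacle — really the only one requiring care — to be bookkeeping the polar identity: making sure the inverse-transpose appears in the right place, that the outer polar undoes it back to $M$ (not $M^{-1}$ or $M^T$), and that the translational part genuinely drops out of the difference-body expression rather than being silently assumed away. Everything else is a one-line manipulation of the definitions of $\cap$, $+$, $\conv$, and $\rho$-dilatation.
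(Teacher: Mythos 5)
Your inverse-transpose computation for the polar identity, namely $(MK)^\circ=M^{-T}K^\circ$ and the cancellation $(M^{-T}X)^\circ=MX^\circ$, is exactly what the paper does, and your treatment of the other three identities is correct; the paper dismisses those as ``trivially true.'' But your proposed reduction from affine to linear $A$ for the polar identity has a genuine error: you presume that the translational part of $A$ ``genuinely drops out'' of $\bigl((A(K))^\circ-(A(C))^\circ\bigr)/2$, and it does not. Polarity does not commute with translation, so $(MK+b)^\circ$ is not a translate of $(MK)^\circ$, and $(MK+b)^\circ-(MC+b)^\circ$ is in general not $(MK)^\circ-(MC)^\circ$. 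A one-dimensional counterexample to the identity as stated: take $K=C=[-1,1]$ and $A(x)=x+\tfrac{1}{2}$. Then $A(K)=[-\tfrac{1}{2},\tfrac{3}{2}]$ with $(A(K))^\circ=[-2,\tfrac{2}{3}]$, so $\bigl(\tfrac{1}{2}\bigl((A(K))^\circ-(A(C))^\circ\bigr)\bigr)^\circ=\bigl[-\tfrac{4}{3},\tfrac{4}{3}\bigr]^\circ=\bigl[-\tfrac{3}{4},\tfrac{3}{4}\bigr]$, whereas $A\bigl(\bigl(\tfrac{1}{2}(K^\circ-C^\circ)\bigr)^\circ\bigr)=A([-1,1])=[-\tfrac{1}{2},\tfrac{3}{2}]$. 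Your hedge toward the Minkowski-centered setting does not rescue this: $K$ and $C$ here are Minkowski centered with $0$ in the interior, and the identity still fails, because the obstruction is the interaction of polarity with translation, not any lack of well-definedness of the polars involved.

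The honest resolution is the one you half-gesture at but do not commit to: the polar identity is only valid for \emph{linear} $A$, and that is all the paper ever needs. Corollary~\ref{cor:invariant} and the proof of Theorem~\ref{thm:minMax_mean_improved} both take $A$ to be a regular $n\times n$ matrix, and the paper's own proof opens with the formula $A(C^\circ)=((A^{-1})^T C)^\circ$, which only has a meaning for linear $A$. Read ``non-singular affine transformation'' as ``non-singular linear transformation'' in the polar part of the statement, and your inverse-transpose chain closes the argument exactly as the paper's does; there is no valid affine-to-linear reduction to perform, and none is required.
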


\begin{proof}
From the fact that 	$A(C^\circ)=((A^{-1})^T(C))^\circ$ and since $A$ is non-singular, we get
\begin{equation*}
\begin{split}
\left(\frac{(A(K))^\circ-(A(C))^\circ}{2}\right)^\circ &=\left(\frac{(A^{-1})^T(K^\circ)-(A^{-1})^T(C^\circ)}{2}\right)^\circ \\
\left(\frac{(A^{-1})^T(K^\circ-C^\circ)}{2}\right)^\circ&=\left((A^{-1})^T \left( \frac{K^\circ-C^\circ}{2}\right) \right)^\circ= A \left( \left( \frac{K^\circ-C^\circ}{2}\right)^\circ  \right) .
\end{split}
\end{equation*}
The other identities are trivially true.
\end{proof}

The next result is a straightforward corollary of Lemma \ref{lem:Means_Invariant}.
\begin{cor}\label{cor:invariant}
Let $C \in \mathcal{K}^n$ be Minkowski centered, $A \in \mathbb R^{n \times n}$ a regular linear transformation and $\alpha \in \R$. Then
	\[
	C \cap (-C) \subset^{opt} \alpha \cdot \textrm{conv} ( C \cup (-C) )
	\] 	
	if and only if
	\[
	A(C) \cap A(-C) \subset^{opt} \alpha \cdot \textrm{conv} ( A(C) \cup A(-C)).
	\]
\end{cor}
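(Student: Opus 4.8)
The plan is to reduce the statement to two elementary observations: that the symmetrization means commute with the linear map $A$, which is exactly Lemma \ref{lem:Means_Invariant}, and that optimal containment under homothety is preserved by every non-singular linear (indeed affine) transformation.

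First I would apply Lemma \ref{lem:Means_Invariant} to the pair $(C,-C)$. Since $A$ is linear we have $A(-C)=-A(C)$, so the lemma yields $A(C)\cap A(-C)=A(C)\cap(-A(C))=A(C\cap(-C))$ and $\conv(A(C)\cup A(-C))=\conv(A(C)\cup(-A(C)))=A(\conv(C\cup(-C)))$. Abbreviating $K:=C\cap(-C)$ and $D:=\conv(C\cup(-C))$ and using $\alpha A(D)=A(\alpha D)$, the two containments in the statement read $K\subset^{opt}\alpha D$ and $A(K)\subset^{opt}\alpha A(D)$, so it suffices to prove that these are equivalent.

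Second, I would establish the general principle: for $K,D\in\K^n$, $\alpha\in\R$, and non-singular linear $A$, one has $K\subset^{opt}\alpha D$ if and only if $A(K)\subset^{opt}\alpha A(D)$. The plain inclusion $K\subset\alpha D$ transfers in both directions simply by applying $A$ or $A^{-1}$. For the minimality part, suppose $A(K)\subset\rho\,\alpha A(D)+t$ for some $\rho\in[0,1)$ and $t\in\R^n$; applying $A^{-1}$ gives $K\subset\rho\,\alpha D+A^{-1}(t)$, contradicting $K\subset^{opt}\alpha D$. The converse implication is symmetric, using that $A$ maps $\R^n$ bijectively onto $\R^n$, so that the translation vector sweeps out all of $\R^n$ either way. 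Alternatively one may simply invoke Proposition \ref{prop:Opt_Containment}: the touching characterization is manifestly affine invariant, since $N(A(D),A(p))=(A^{-1})^T N(D,p)$ and $0\in\conv\{u^1,\dots,u^k\}$ if and only if $0\in\conv\{(A^{-1})^T u^1,\dots,(A^{-1})^T u^k\}$. Combining this principle with the first step gives the claim.

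I do not expect a genuine obstacle here; the corollary is genuinely "straightforward". The only two points that want a moment's attention are that it is the \emph{linearity} of $A$ (not mere affinity) that keeps $-C$ mapping to $-A(C)$ and hence keeps us in the realm of symmetrizations, and that \emph{non-singularity} is what makes the translation parameter transform bijectively. Minkowski centeredness of $C$ is not used in the argument and is retained in the hypothesis only because the corollary is meant to be applied to Minkowski centered bodies elsewhere.
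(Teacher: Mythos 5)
Your proof is correct and matches the paper's intent exactly: the paper gives no explicit argument, remarking only that the corollary follows immediately from Lemma \ref{lem:Means_Invariant}, which is precisely your first step, combined with the affine invariance of optimal containment, which is your second. The observations you add at the end about linearity versus affinity and about Minkowski centeredness being unused are accurate.
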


The following proposition is an easy corollary out of Proposition \ref{prop:Opt_Containment}. It is a (variant of a) known result which in a more general version is given in \cite[(1.1)]{GrK} and we will use it to prove Lemma \ref{lem:Regard_of_Asym}.
\begin{proposition}\label{lem:polar}
	Let $ C, K \in \K^n_0$. 
	Then $C \subset^{opt} K$  if and only if $K^{\circ} \subset^{opt} C^{\circ}$. Moreover, the touching points of $C$ to the boundary of $K$ become the outer normals of supporting halfspaces to the touching points of $K^\circ$ to the boundary of $C^\circ$ and vice versa.
\end{proposition}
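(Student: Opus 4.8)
The plan is to derive everything from the $0$-symmetric case of Proposition~\ref{prop:Opt_Containment}, which says that for $A,B\in\K^n_0$ with $A\subset B$ one has $A\subset^{opt}B$ if and only if $A\cap\bd(B)\neq\emptyset$. Since $0\in\inter(C)\cap\inter(K)$, polarity is an inclusion-reversing involution on $\K^n_0$, so $C\subset K$ if and only if $K^\circ\subset C^\circ$, and it remains to show that under these equivalent inclusions $C\cap\bd(K)\neq\emptyset$ if and only if $K^\circ\cap\bd(C^\circ)\neq\emptyset$, together with the announced correspondence between touching points and normals. Throughout I will use the identities $\|x\|_A=h_{A^\circ}(x)$, the equivalences $x\in\bd(A)\iff\|x\|_A=1$ and $a\in\bd(A^\circ)\iff h_A(a)=1$ (valid for $A\in\K^n_0$), and the self-duality $A^{\circ\circ}=A$.

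For the forward direction, assume $C\subset K$ and pick a touching point $p\in C\cap\bd(K)$. Since $C\subset K$, necessarily $p\in\bd(C)$, so $\|p\|_C=1$. As $0\in\inter(K)$, I may choose an outer normal $a\in N(K,p)$ normalized so that the supporting halfspace of $K$ at $p$ is $H^\le_{a,1}$; equivalently $h_K(a)=1$, i.e. $a\in\bd(K^\circ)$. From $a^Tp=1$ and $p\in C$ we get $h_C(a)\ge 1$, while $C\subset K$ gives $h_C(a)\le h_K(a)=1$; hence $h_C(a)=1$, that is $a\in\bd(C^\circ)$. Together with $a\in\bd(K^\circ)\subset K^\circ$ this yields $a\in K^\circ\cap\bd(C^\circ)$, so the set is nonempty and $K^\circ\subset^{opt}C^\circ$. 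Moreover $\|p\|_C=1$ gives $h_{C^\circ}(p)=1$, so $C^\circ\subset H^\le_{p,1}$, and $p^Ta=1$ puts $a$ on the bounding hyperplane; thus $p$ is the outer normal of a supporting halfspace of $C^\circ$ at the touching point $a$ of $K^\circ$ to $\bd(C^\circ)$, which is exactly the passage of a touching point of $C$ to $\bd(K)$ into a normal at a touching point of $K^\circ$ to $\bd(C^\circ)$.

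For the reverse direction I apply the argument just given to the pair $(K^\circ,C^\circ)$ in place of $(C,K)$, using $C^{\circ\circ}=C$ and $K^{\circ\circ}=K$; this simultaneously yields $C\subset^{opt}K$ from $K^\circ\subset^{opt}C^\circ$ and the ``vice versa'' half of the touching-point/normal correspondence. By Proposition~\ref{prop:Opt_Containment} (its $0$-symmetric clause) the boundary-intersection conditions are equivalent to the optimal containments, which completes the proof.

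As for difficulty: the statement is essentially folklore and the proof is short; the only points requiring care are (a) noticing that a touching point $p\in C\cap\bd(K)$ automatically lies on $\bd(C)$, which is what makes $p$ a unit-gauge vector and hence a legitimate normal on the polar side, and (b) fixing the normalization of all supporting halfspaces at right-hand side $1$, so that the single identity $a^Tp=1$ can play its double role --- as ``$h_C(a)=1$'' on one side and as ``$\|p\|_C=1$'' on the other. Beyond this bookkeeping I expect no real obstacle.
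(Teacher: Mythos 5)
The paper does not actually prove this proposition: it states it as ``an easy corollary of Proposition~\ref{prop:Opt_Containment}'' and refers to \cite[(1.1)]{GrK}. Your argument supplies exactly the proof that is being waved at: reduce optimal containment for $0$-symmetric bodies to the nonempty-boundary-intersection criterion of Proposition~\ref{prop:Opt_Containment}, then shuttle a touching point $p\in C\cap\bd(K)$ across polarity by normalizing a supporting halfspace of $K$ at $p$ to right-hand side $1$, so that the normal $a$ satisfies $h_K(a)=1$ (hence $a\in\bd(K^\circ)$) while $a^Tp=1$ together with $p\in C\subset K$ forces $h_C(a)=1$ (hence $a\in\bd(C^\circ)$); the roles of $p$ and $a$ then swap to give the normal/touching-point correspondence, and the reverse implication follows from involutivity of polarity. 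The steps you flag as requiring care --- that $p\in C\cap\bd(K)$ together with $C\subset K$ forces $p\in\bd(C)$, and the normalization making $a^Tp=1$ do double duty as $h_C(a)=1$ and $\|p\|_C=1$ --- are indeed the crux, and you handle both correctly. This is the expected proof, and it is complete.
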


Let us mention that while the containment in Proposition \ref{lem:polar} holds for any $C,K$ with $0$ in their interior, the optimality of this containment may in general be lost even in case of Minkowski centered $C$ and $K$ (see Figure \ref{fig:polar-nonopt}).
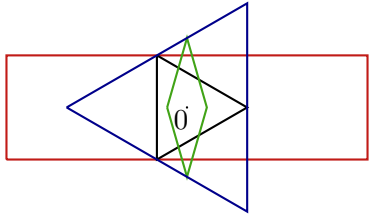
\begin{figure}[ht] 
\centering
    \begin{tikzpicture}[scale=0.8]
    \draw [thick, dred] (-3,-0.866) -- (-3,0.866) -- (3,0.866) -- (3,-0.866)-- (-3,-0.866) ;
    \draw [thick, black] (-0.5,0.866) -- (1,0) --  (-0.5,-0.866) -- (-0.5,0.866);
    \draw [thick, dgreen] (-0.33,0) -- (0,1.154) --  (0.33,0) -- (0,-1.154)-- (-0.33,0);
    \draw [thick, dblue] (-2,0) -- (1,1.732) --  (1,-1.732) -- (-2,0);
    \draw [fill] (0,0) circle [radius=0.01];
    \draw (-0.1,-0.2) node {$0$};
     \end{tikzpicture}
\caption{Minkowski centered $C$ (black) and $K$ (red), s.t.~$C \subset^{opt} K$ but $K^{\circ}$ (green) is not optimal contained in $C^{\circ}$ (blue). 
} 
\label{fig:polar-nonopt}
\end{figure}

As mentioned in the introduction, \eqref{eq:means_of_sets} is not left-to-right optimal for regular Minkowski centered simplices in even dimensions (while it is in odd dimensions). The following lemma prepares us to prove this fact in Lemma \ref{lem:Simplex_Odd}.

\begin{lemma}\label{lemma:opt_P_Ppolar}
	Let $P\in\K^n_0$ be a polytope and
	$v \in \bd(P)$ such that $v$ is also an outer normal of a closest facet of $P$ to the origin $0$, then
	\[
	P^\circ\subset^{opt}\frac{1}{\|v\|^2}P.
	\]
\end{lemma}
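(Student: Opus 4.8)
The plan is to apply the optimal-containment characterization of Proposition \ref{prop:Opt_Containment}, which for $0$-symmetric bodies reduces to verifying $P^\circ \cap \bd\bigl(\tfrac{1}{\|v\|^2}P\bigr) \neq \emptyset$ together with the containment itself. First I would establish the containment $P^\circ \subset \tfrac{1}{\|v\|^2} P$. Since $v$ is the outer normal of a closest facet $F$ of $P$ to $0$, the distance from $0$ to $\aff(F)$ is $1/\|v\|$ (normalizing so the facet lies in $\{x : v^Tx = 1\}$ — here one must be careful about the normalization of $v$; the statement implicitly fixes $\|v\|$ via ``$v$ is also an outer normal,'' meaning the specific scaling for which $v^Tx \le 1$ is the facet-defining inequality, so that $v \in \bd(P^\circ)$). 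Then $\|v\|$ being minimal among facet normals means that all of $\bd(P^\circ)$, which is $\conv$ of the facet normals and their ``directions,'' lies within distance... more precisely: $P^\circ \subset \|v\|_{*} \B_2$-type bound is not quite what we want. Instead, the clean route is: the inradius consideration gives $\tfrac{1}{\|v\|}\B_2 \subset P$ is false in general, so I would argue directly via support functions.

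The cleaner approach: recall $\|x\|_P = h_{P^\circ}(x)$ and $\|x\|_{P^\circ} = h_P(x)$. The containment $P^\circ \subset \lambda P$ is equivalent to $\|x\|_{\lambda P} \le \|x\|_{P^\circ}$ for all $x$, i.e. $\tfrac{1}{\lambda}\|x\|_P \le h_P(x)$, i.e. $\tfrac{1}{\lambda} h_{P^\circ}(x) \le h_P(x)$. Taking $\lambda = 1/\|v\|^2$ this reads $\|v\|^2 h_{P^\circ}(x) \le h_P(x)$ for all $x$. Now $v$ being an outer normal of a closest facet to $0$ means $v \in \bd(P^\circ)$ and $\|v\| = \min\{\|u\| : u \in \bd(P^\circ),\ u \text{ a vertex of } P^\circ\}$ — i.e. $v$ is a \emph{closest vertex} of $P^\circ$ to the origin. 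But a closest vertex of a polytope to the origin is the point of the polytope closest to the origin only if $0 \notin$ the polytope's normal fan issues... Here the key clean fact is: since $0 \in \inter(P^\circ)$ and $v$ is the vertex of $P^\circ$ of minimal norm, the hyperplane $\{x : v^Tx = \|v\|^2\}$ supports $P^\circ$ at $v$ (because $v \in N(P^\circ, v)$ follows from $v$ being the closest point of $\{v\}$, but we need it for the whole polytope). Actually the precise statement I want is: $P^\circ \subset \{x : v^Tx \le \|v\|^2\}$ is \emph{not} generally true. Let me reconsider — the correct pairing uses that $v \in \bd(P)$ AND $v$ is a facet normal, so $v$ plays a double role.

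So the actual argument: Since $v$ is the outer normal of a closest facet of $P$ to $0$, we have $v \in \bd(P^\circ)$ (the facet $\{x \in P : v^Tx = 1\}$ corresponds to vertex $v$ of $P^\circ$), and $\|v\| = 1/d(0,\aff F)$ is minimal, hence $v$ is a \emph{vertex of $P^\circ$ of minimal norm}, equivalently $v/\|v\|^2 \in \bd(P^{\circ\circ}) = \bd(P)$ is the point of $\bd(P)$... no. The clean finish: $v \in \bd(P^\circ) \cap \bd(P)$ is given (``$v \in \bd(P)$'' in the hypothesis, and $v$ is a vertex of $P^\circ$ as it is a facet normal of $P = P^{\circ\circ}$), so $\tfrac{v}{\|v\|^2} \in \bd\bigl(\tfrac{1}{\|v\|^2}P\bigr)$, and $\tfrac{v}{\|v\|^2} \in P^\circ$ since $v \in \bd(P^\circ)$ means $v/\|v\|^2 \in \tfrac{1}{\|v\|^2}P^\circ \subset P^\circ$ only if $\|v\| \ge 1$... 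The main obstacle, and where I would spend the real effort, is pinning down the normalization conventions so that the two roles of $v$ (as a boundary point of $P$ and as a facet-normal, i.e. vertex of $P^\circ$) interlock correctly, proving simultaneously that (a) $P^\circ \subset \tfrac{1}{\|v\|^2}P$ and (b) the scaled copy touches $P^\circ$ at $v/\|v\|^2$ (or at $v$ itself), after which Proposition \ref{prop:Opt_Containment} closes the argument immediately. I expect the minimality of $\|v\|$ enters exactly to guarantee (a): if some other facet of $P$ with normal $w$, $\|w\| > \|v\|$, violated $P^\circ \subset \tfrac{1}{\|v\|^2}P$, one traces it back to $\|w\| < \|v\|$, a contradiction.
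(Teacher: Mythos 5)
Your proposal does not reach a proof: it is an outline in which the two substantive steps --- the containment $P^\circ\subset\frac{1}{\|v\|^2}P$ and the identification of a common boundary point --- are both left open, and several of the intermediate assertions made along the way are false. The hypothesis $v\in\bd(P)$ already fixes the normalization you worry about: writing $P=\{x:|(u^i)^Tx|\le t_i,\ i\in[m]\}$ with unit vectors $u^i$ and $0<t_1\le\dots\le t_m$, the point $v$ is $t_1u^1$, the foot of the perpendicular from $0$ onto a closest facet, so $\|v\|=t_1=d(0,\aff(F))$, not $1/d(0,\aff(F))$. Consequently $v$ is in general \emph{not} a point of $\bd(P^\circ)$, contrary to what your ``clean finish'' assumes; the vertex of $P^\circ$ corresponding to the closest facet is $u^1/t_1=v/\|v\|^2$, and since $t_1$ is the \emph{smallest} of the $t_i$, this is the point of largest norm among the $\pm u^i/t_i$, not a vertex of minimal norm as you claim twice. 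Likewise the inball of $P$ centered at $0$ is $\|v\|\B_2=t_1\B_2$, not $\frac{1}{\|v\|}\B_2$. Your closing sentence (``if some facet with normal $w$ violated the containment, one traces it back to $\|w\|<\|v\|$'') is a hope, not an argument.

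The route you dismiss as ``not quite what we want'' is exactly the one that works, and is the paper's proof: every point $\pm u^i/t_i$ generating $P^\circ=\conv(\{\pm u^i/t_i\})$ has norm $1/t_i\le 1/t_1$, so $P^\circ\subset\frac{1}{t_1}\B_2$; and $t_1\B_2\subset P$, so $\frac{1}{t_1}\B_2=\frac{1}{t_1^2}\,(t_1\B_2)\subset\frac{1}{t_1^2}P$. Chaining these gives the containment with factor $\frac{1}{t_1^2}=\frac{1}{\|v\|^2}$. The touching point is $u^1/t_1=\frac{1}{t_1^2}(t_1u^1)$, which lies in $P^\circ$ and on $\bd\bigl(\frac{1}{t_1^2}P\bigr)$ because $t_1u^1\in\bd(P)$; the $0$-symmetric case of Proposition \ref{prop:Opt_Containment} then yields optimality. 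In your support-function formulation the same two estimates read $\|v\|^2h_{P^\circ}(x)\le t_1^2\cdot\frac{1}{t_1}\|x\|=t_1\|x\|\le h_P(x)$, so that approach would also close, but only after supplying precisely the Cauchy--Schwarz bound on the vertices of $P^\circ$ and the inball inclusion that your write-up never establishes.
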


\begin{proof}
	Let $0<t_1\leq \cdots\leq t_m$ and $u^1,\dots,u^m\in \S^{n-1}$ be such that $P=\{x\in\R^n:|(u^i)^Tx|\leq t_i,\,i \in [m]\}.$ 
	Then $P^\circ=\conv(\{\pm u^1/t_1,\dots,u^m/t_m\})$,
	$t_1\B_2\subset^{opt}P$ and $t_1 u^1\in t_1\B_2\cap \bd(P)$.  
	Since  $\frac1{t_1} u^1\in P^\circ\cap \bd(\frac1{t_1}\B_2)$, we have $P^\circ\subset^{opt} \frac1{t_1} \B_2\subset^{opt} \frac1{t_1^2} P$
	and $\frac1{t_1} u^1$ is a common touching point of $P^\circ$ and $P$. Thus by part (iii) of Proposition \ref{prop:Opt_Containment}, we have $P^\circ\subset^{opt} \frac1{t_1^2}P$. Choosing $v = t_1u^1$  finishes the proof.
\end{proof}

We recall a stability result for the Banach-Mazur distance in the near-simplex case, given in \cite[Theorem 2.1]{Sch2}.    

\begin{proposition}\label{prop:Schneider}
Let $S \in \K^n$ be an $n$-simplex and $C\in\K^n$ such that $s(C)=n-\varepsilon$ and $\varepsilon\in(0,\frac 1 n)$. Then
\begin{equation}\label{eq:Schne_Stabil}
d_{BM}(C,S)\leq 1+\frac{(n+1)\varepsilon}{1-n\varepsilon}.
\end{equation}
\end{proposition}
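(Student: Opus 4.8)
The plan is to reduce to the Minkowski centered case and then extract, from the touching configuration of the optimal containment $-C\subset^{opt}sC$, a single $n$-simplex that is simultaneously almost inscribed in and almost circumscribed about $C$. Since both $s(\cdot)$ and $d_{BM}(\cdot,S)$ are invariant under non-singular affine maps, I may translate $C$ so that $0$ is a Minkowski center; then $C\subset -sC$ with $s=n-\varepsilon$, and minimality of $s=s(C)$ makes this optimal, that is, $-C\subset^{opt}sC$. Put $\rho:=1+\frac{(n+1)\varepsilon}{1-n\varepsilon}=\frac{1+\varepsilon}{1-n\varepsilon}$; it then suffices to produce an $n$-simplex $\Sigma$ and a vector $t$ with $\Sigma\subset C\subset t+\rho\,\Sigma$, because $\Sigma$ is an affine image of $S$ and $d_{BM}$ is symmetric, which gives $d_{BM}(C,S)\le\rho$.

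First I would apply Proposition~\ref{prop:Opt_Containment} to $-C\subset^{opt}sC$: there are $k\le n+1$ touching points $p^j\in(-C)\cap\bd(sC)$ and normals $u^j\in N(sC,p^j)=N(C,p^j/s)$ with $0\in\conv\{u^1,\dots,u^k\}$. After rescaling so that $h_C(u^j)=1$ (hence $(u^j)^Tx\le 1$ on $C$ and $(u^j)^Tp^j=s$), for any weights $\lambda_j>0$ with $0=\sum_j\lambda_ju^j$ and $\sum_j\lambda_j=1$ the inclusion $-p^j\in C$ gives $(u^i)^Tp^j\ge-1$ for all $i$, hence $0=\big(\sum_i\lambda_iu^i\big)^Tp^j\ge\lambda_js-(1-\lambda_j)$, so $\lambda_j\le\frac{1}{s+1}$ for every $j$. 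Since $s>n-1$, this bound (applied to all such weight vectors) forces $k=n+1$ and makes the $u^j$ affinely independent, since otherwise $0$ would lie in the convex hull of at most $n$ of them; thus $P:=\{x:(u^j)^Tx\le1,\ j\in[n+1]\}$ is an $n$-simplex with $C\subset P$, and $\Sigma$ will be a suitable translate of $\tfrac1\rho P$. Note also $\sum_j\big(\tfrac1{s+1}-\lambda_j\big)=\tfrac{\varepsilon}{n+1-\varepsilon}$, so each $\lambda_j$ lies within $\tfrac{\varepsilon}{s+1}$ of $\tfrac1{s+1}$.

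For the reverse inclusion the key observation is that $C$ contains the simplex $\widehat P:=\conv\{-p^1,\dots,-p^{n+1}\}$, which in the exact-simplex case $s=n$ equals $P=C$ — there the points $v^j:=p^j/s\in\bd(C)$ (lying on facet $j$ of $P$, since $(u^j)^Tv^j=1$) are the facet centroids and $-p^j$ are the vertices of $P$. For $s=n-\varepsilon$ one shows $\widehat P$ to be an $O(\varepsilon)$-perturbation of $P$: since $-sv^j\in C\subset P$ one has $(u^i)^Tv^j\in[-\tfrac1s,1]$ for $i\ne j$, and the identities $\sum_k\lambda_k(u^k)^Tv^j=0$ together with the $\lambda_k$ being $O(\varepsilon)$-close to $\tfrac1{s+1}$ force each $(u^i)^Tv^j$ ($i\ne j$) to be $O(\varepsilon)$-close to $-\tfrac1s$, whence the vertices $w^1,\dots,w^{n+1}$ of $P$ are $O(\varepsilon)$-close to the corresponding $-p^j$. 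Taking $\Sigma$ to be the image of $P$ under the homothety of ratio $\tfrac1\rho$ that maps the centroid of $P$ to the centroid of $\widehat P$, one checks $\Sigma\subset\widehat P\subset C$; since $C\subset P=t+\rho\,\Sigma$ for the matching $t$, this completes the argument.

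I expect the last step to be the main obstacle: converting the scalar defect $\tfrac{\varepsilon}{n+1-\varepsilon}$ into the \emph{sharp} containment $\Sigma\subset C$. Concretely this amounts to controlling the barycentric coordinates $\xi^{(j)}$ of each vertex $w^j$ of $P$ with respect to $\widehat P$ and showing that every off-diagonal coordinate satisfies $\xi^{(j)}_m\ge-\tfrac{\varepsilon}{1-n\varepsilon}$. The delicacy is that the linear system determining $\xi^{(j)}$ from the data $\big((u^k)^T(-p^i)\big)$ degenerates as $\varepsilon\to0$, reflecting the extremality of the simplex, so the $O(\varepsilon)$ perturbation of its coefficient matrix has to be balanced against the $O(1/\varepsilon)$ growth of the inverse; carrying out this bookkeeping with the precise constants — and choosing the homothety center exactly right — is what yields the bound $1+\tfrac{(n+1)\varepsilon}{1-n\varepsilon}$ and recovers the asserted estimate.
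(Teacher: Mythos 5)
The paper does not prove Proposition~\ref{prop:Schneider} at all: it is quoted verbatim as \cite[Theorem 2.1]{Sch2} (Schneider's stability theorem for the simplex), so the only ``proof'' expected here is the citation. Your attempt to reprove it from scratch follows what is essentially Schneider's strategy, and its first half is sound: reducing to a Minkowski centered $C$ with $-C\subset^{opt} sC$, normalizing $h_C(u^j)=1$, deriving $\lambda_j\le\frac1{s+1}$ from $(u^j)^Tp^j=s$ and $(u^i)^Tp^j\ge-1$, concluding $k=n+1$ and affine independence of the $u^j$ (so that $P=\{x:(u^j)^Tx\le1\}$ is a simplex with $C\subset P$), and noting $\widehat P=\conv\{-p^1,\dots,-p^{n+1}\}\subset C$ are all correct steps.

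However, the proof has a genuine gap exactly where you flag it: the entire quantitative content of the proposition is the claim that some translate of $\tfrac1\rho P$ with $\rho=\frac{1+\varepsilon}{1-n\varepsilon}$ fits inside $\widehat P$, and this is never established. You assert ``one checks $\Sigma\subset\widehat P$'' and then concede in the closing paragraph that verifying it requires controlling the barycentric coordinates of the vertices of $P$ with respect to $\widehat P$ through a linear system whose coefficient matrix degenerates as $\varepsilon\to0$, ``balancing an $O(\varepsilon)$ perturbation against the $O(1/\varepsilon)$ growth of the inverse.'' That balancing act is precisely the theorem; without it you have only shown $\widehat P\subset C\subset P$ together with the soft estimate that the off-diagonal entries $(u^i)^T(-p^j)$ are $1-O(\varepsilon)$, which by itself yields no explicit ratio, let alone the sharp constant $1+\frac{(n+1)\varepsilon}{1-n\varepsilon}$. (It is also not justified that centering the homothety at the centroid is the correct choice.) As written, the argument is an outline of Schneider's proof with its decisive computation omitted; either carry out that computation in full or, as the paper does, simply cite \cite[Theorem 2.1]{Sch2}.
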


\section{Optimality in Firey's inequality chain}
As we mentioned in the introduction, two of the containments in Proposition \ref{prop:means_of_sets} are always optimal in case of the symmetrizations.

\begin{lemma}\label{lem:Regard_of_Asym}
	Let $C \in \K^n$.
	Then
	\begin{enumerate}[(i)]
		\item $\frac{C-C}{2} \subset^{opt} \conv ( C \cup (-C) )$,
		\item $C \cap (-C) \subset^{opt} \left(\frac{C^\circ-C^\circ}{2}\right)^\circ$, if also $0 \in C$.
	\end{enumerate}
\end{lemma}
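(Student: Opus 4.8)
The plan is to prove (i) from scratch and then deduce (ii) from (i) by polarity. Note first that all four sets occurring in the statement are $0$-symmetric convex bodies---for $C\cap(-C)$ and $\left(\frac{C^\circ-C^\circ}{2}\right)^\circ$ this uses $0\in\inter C$---so by the last assertion of Proposition~\ref{prop:Opt_Containment} it is enough, in each of the two inclusions, to produce a single point lying simultaneously in the smaller body and on the boundary of the larger one.

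For (i): the inclusion $\frac{C-C}{2}\subset\conv(C\cup(-C))$ is immediate, since $\frac12(x-y)=\frac12 x+\frac12(-y)$ is a convex combination of $x\in C$ and $-y\in -C$. For optimality I would pass to support functions. The function $f\colon\S^{n-1}\to\R$, $f(a):=h_C(a)-h_C(-a)$, is continuous and odd, hence it vanishes at some $a_0$ because $\S^{n-1}$ is connected (here $n\ge 2$). Put $h:=h_C(a_0)=h_C(-a_0)$ and pick $x,y\in C$ with $a_0^{T}x=h$ and $a_0^{T}y=-h$. Then $p:=\frac12(x-y)\in\frac{C-C}{2}$, while $a_0^{T}p=h=\max\{h_C(a_0),h_C(-a_0)\}=h_{\conv(C\cup(-C))}(a_0)$, so $p\in\bd\conv(C\cup(-C))$. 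Proposition~\ref{prop:Opt_Containment} then gives $\frac{C-C}{2}\subset^{opt}\conv(C\cup(-C))$.

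For (ii): since $C\cap(-C)$ and $\left(\frac{C^\circ-C^\circ}{2}\right)^\circ$ are $0$-symmetric convex bodies, Proposition~\ref{lem:polar} converts the asserted optimal containment into the optimal containment of their polars taken in the reverse order. Using $(-C)^\circ=-C^\circ$, the identity $\bigl(C\cap(-C)\bigr)^\circ=\conv\bigl(C^\circ\cup(-C^\circ)\bigr)$, and the bipolar theorem $\left(\left(\frac{C^\circ-C^\circ}{2}\right)^\circ\right)^\circ=\frac{C^\circ-C^\circ}{2}$, this becomes $\frac{C^\circ-C^\circ}{2}\subset^{opt}\conv\bigl(C^\circ\cup(-C^\circ)\bigr)$, which is exactly part (i) applied to the convex body $D:=C^\circ$ (a body because $0\in\inter C$). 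This is precisely the situation Proposition~\ref{lem:polar} is designed for: although polarity need not preserve optimal containment in general (Figure~\ref{fig:polar-nonopt}), it does so between $0$-symmetric bodies.

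The one step I expect to require care is the middle one of (i): finding a direction in which the two opposite supporting hyperplanes of $C$ are equidistant from the origin. This is a soft topological fact---an odd continuous function on a connected sphere has a zero---but it is exactly what makes use of $n\ge2$; everything else (the elementary inclusions, the polar identities, and rephrasing tightness as the existence of a common boundary point) is routine once Propositions~\ref{prop:Opt_Containment} and~\ref{lem:polar} are in hand.
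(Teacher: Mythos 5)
Your proof is correct and follows the paper's high-level structure (establish (i) directly, then deduce (ii) from (i) by polarity via Proposition~\ref{lem:polar}), but the key existence step in (i) is handled by a genuinely different argument. The paper produces a touching point by asserting the existence of an extreme point $x$ of $C$ and an extreme point $y$ of $-C$ such that $\conv(\{x,y\})$ is an extreme face of $\conv(C\cup(-C))$, so that a single supporting halfspace touches both $C$ and $-C$; the midpoint $\frac12(x+y)$ is then the required common boundary point. You instead work analytically with support functions: the map $a\mapsto h_C(a)-h_C(-a)$ is odd and continuous on $\S^{n-1}$, hence has a zero $a_0$ by connectedness, and the midpoint of the two corresponding supporting points of $C$ in direction $\pm a_0$ does the job. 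The two arguments are really two formalizations of the same geometric fact (there must be a direction in which $C$ and $-C$ have equal support), but yours is more explicit about why such a direction exists, whereas the paper's appeal to an extreme edge with one vertex from each of $C$ and $-C$ is stated without justification. You are also right to flag that your argument needs $n\ge 2$ for the connectedness of $\S^{n-1}$; the statement is in fact false for $n=1$ unless $C$ is symmetric, and the paper's extreme-edge argument implicitly needs $n\ge 2$ as well. For part (ii) your derivation via $(C\cap(-C))^\circ=\conv(C^\circ\cup(-C^\circ))$, the bipolar theorem, and Proposition~\ref{lem:polar} is exactly the paper's route, just spelled out in more detail; your explicit reliance on $0\in\inter C$ (rather than merely $0\in C$ as the lemma states) is the honest hypothesis needed for $C^\circ$ to be a body and for Proposition~\ref{lem:polar} to apply.
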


\begin{proof}
We start proving (i). By \eqref{eq:means_of_sets}, we have $\frac{1}{2}(C-C)  \subset  \conv(C \cup (-C))$.  
Now, there exists an extreme point $x$ of $C$ and an extreme point $y$ of $-C$, s.t.~$\conv(\{x,y\})$ is extreme in $\conv(C \cup (-C))$ and thus a halfspace $H^\le$ supporting $C$ and $-C$ in that edge.
This implies $\frac{1}{2}(x+y) \in (\frac{1}{2}(C-C))\cap \bd(\conv(C\cup(-C))$, and since $\frac{C-C}{2}$ and $\conv ( C \cup (-C) )$ are $0$-symmetric, Part
(iii) in Proposition \ref{prop:Opt_Containment} concludes the proof (i). 

Since $0 \in C$, we have $C \cap (-C) \neq \emptyset$. 
Applying (i) and Proposition \ref{lem:polar} implies (ii).
\end{proof}

Now we are ready for Lemma \ref{lem:Simplex_Odd}.

\begin{lemma}\label{lem:Simplex_Odd}
	Let $S$ be a Minkowski centered regular $n$-simplex. Then
	\begin{enumerate}[(i)]
		\item $S \cap (-S) \subset^{opt} \conv( S \cup (-S) )$, if $n$ is odd, 
		\item $S \cap (-S) \subset^{opt} \frac{n}{n+1} \conv( S \cup (-S) )$, if $n$ is even, and
		\item $\left(\frac{S^\circ-S^\circ}{2}\right)^\circ \subset^{opt}\frac{n(n+2)}{(n+1)^2} \cdot \frac{S-S}{2}$, if $n$ is even.
	\end{enumerate}
\end{lemma}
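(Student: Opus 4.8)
The plan is to set up explicit coordinates for the regular Minkowski centered $n$-simplex $S$ and to compute all four symmetrizations concretely, then invoke Proposition \ref{prop:Opt_Containment} to verify optimality. A convenient model is to take $S = \conv(\{u^1,\dots,u^{n+1}\})$ with $\sum_{i=1}^{n+1} u^i = 0$ and with the facet normals being (up to scaling) $-u^1,\dots,-u^{n+1}$; concretely one may realize $S$ in the hyperplane $\sum x_i = 0$ of $\R^{n+1}$, with vertices proportional to $e^i - \frac{1}{n+1}\mathbf{1}$ and facet normals proportional to the same vectors. In this model $-S$ has vertices $-u^i$ and facet normals $u^i$, so $S \cap (-S) = \{x : |\langle u^i, x\rangle| \le h \text{ for all } i\}$ for the appropriate right-hand side $h$, while $\conv(S \cup (-S)) = \conv(\{\pm u^1,\dots,\pm u^{n+1}\})$. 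The first task is to identify, in these coordinates, the facets of $\conv(S\cup(-S))$ and the vertices of $S\cap(-S)$, and to read off which vertices of $S\cap(-S)$ lie on which facets of $\conv(S\cup(-S))$.

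For part (i), since both bodies are $0$-symmetric, by the last clause of Proposition \ref{prop:Opt_Containment} it suffices to exhibit a point of $S\cap(-S)$ on $\bd(\conv(S\cup(-S)))$, equivalently to show $S\cap(-S) \not\subset \rho\,\conv(S\cup(-S))$ for $\rho<1$. When $n$ is odd, the set $\{1,\dots,n+1\}$ has even cardinality, so one can split the index set into two halves $I, J$ of size $\frac{n+1}{2}$ and consider the point $x = \frac{1}{2}\big(\sum_{i\in I}u^i - \sum_{j\in J}u^j\big)$ (suitably normalized). The two expressions $\sum_{i\in I}u^i$ and $-\sum_{j\in J}u^j = \sum_{i\in I}u^i$ show this point is a common convex combination witnessing membership in both $S$ and $-S$ after scaling, hence lies in $S\cap(-S)$; at the same time a supporting halfspace of $\conv(S\cup(-S))$ with normal separating $\{u^i : i\in I\}$ from $\{-u^j : j\in J\}$ shows it lies on the boundary. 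For part (ii), when $n$ is even no such even split exists; the best one can do is split into blocks of sizes $\frac{n}{2}$ and $\frac{n}{2}+1$, and the resulting extremal point of $S\cap(-S)$ lands at ``depth'' $\frac{n}{n+1}$ along the corresponding direction in $\conv(S\cup(-S))$ — this is where the factor $\frac{n}{n+1}$ comes from. I would compute the gauge $\|x\|_{\conv(S\cup(-S))}$ of the most-interior extreme point of $S\cap(-S)$ and check it equals $\frac{n+1}{n}$; combined with the fact that after scaling by $\frac{n}{n+1}$ this point touches $\bd\big(\frac{n}{n+1}\conv(S\cup(-S))\big)$, the $0$-symmetric clause of Proposition \ref{prop:Opt_Containment} gives optimality.

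For part (iii), the cleanest route is Lemma \ref{lemma:opt_P_Ppolar}. Apply it with $P = \conv(S\cup(-S))$: the arithmetic mean $\frac{S-S}{2}$ is, up to the dilation given by Lemma \ref{lem:Regard_of_Asym}(i) (which is an equality of sets for a regular simplex only up to the known factor — here I need the explicit relation $\frac{S-S}{2} = \frac{n+1}{2n}\,P$ or similar, to be pinned down in coordinates), a dilate of $P$, and the harmonic mean $\left(\frac{S^\circ - S^\circ}{2}\right)^\circ$ equals $\left(\conv(S^\circ \cup (-S^\circ))\right)^\circ$ up to the corresponding reciprocal factor, i.e.\ a dilate of $P^\circ = S\cap(-S)$ scaled back. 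So part (iii) reduces to computing $\|v\|^2$ where $v$ is a facet normal of $P$ realizing a closest facet to $0$, and to tracking the two scaling constants between $\frac{S-S}{2}$ and $P$ and between the harmonic mean and $P^\circ$; the product should come out to $\frac{n(n+2)}{(n+1)^2}$. The main obstacle I anticipate is bookkeeping: getting the several normalization constants (vertex length, facet-distance, the Firey-type dilation factors between each mean and the cube/cross-polytope $P$, $P^\circ$) mutually consistent, and correctly identifying the extreme points / facets in the even-$n$ case so that Proposition \ref{prop:Opt_Containment} applies. Once the coordinates are fixed these are all finite computations, but the factor $\frac{n(n+2)}{(n+1)^2}$ in particular only appears after carefully composing two separate dilations, so a sign or reciprocal slip there is the likeliest pitfall.
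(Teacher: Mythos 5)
Your plan for parts (i) and (ii) matches the paper's approach: pick a balanced combination of half the simplex vertices, verify it lies in $S \cap (-S)$, and locate it exactly on the boundary of the (scaled) maximum. In the paper the witness for (i) is $p=\frac{2}{n+1}(p^1+\cdots+p^{(n+1)/2})$ with parallel supporting halfspaces of normal $\pm\sum_{i\le(n+1)/2}p^i$ at $\pm p$; for (ii) the paper enumerates the vertices of $S\cap(-S)$ explicitly, finds $\|p\|^2 = \frac{1}{n+1}$, and exploits $S^\circ=-nS$ to conclude $\frac{n+1}{n}p \in \bd(\conv(S\cup(-S)))$. One slip in your (ii): you need to control the \emph{largest} gauge among the extreme points of $S\cap(-S)$, not the ``most-interior'' one; this happens not to matter here only because all vertices of $S\cap(-S)$ are equivalent under the symmetry group of the regular simplex, but the logic should be stated the right way around.

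The plan for (iii), however, has a genuine gap. You set $P=\conv(S\cup(-S))$ and want $\frac{S-S}{2}$ to be a dilate of $P$, invoking Lemma~\ref{lem:Regard_of_Asym}(i). But that lemma only gives the optimal \emph{containment} $\frac{S-S}{2}\subset^{opt}\conv(S\cup(-S))$, not a dilation, and in fact $\frac{S-S}{2}$ and $\conv(S\cup(-S))$ are never positive multiples of each other for $n\ge 2$: for $n=3$ they are a cuboctahedron and a cube, and even for $n=2$ they are regular hexagons rotated by $30^\circ$ relative to one another. Consequently the scaling constants you want to ``track'' do not exist, and the reduction to Lemma~\ref{lemma:opt_P_Ppolar} applied to $P=\conv(S\cup(-S))$ breaks down. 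The paper's route avoids this by applying Lemma~\ref{lemma:opt_P_Ppolar} directly to $P=\frac{S-S}{2}$ itself: with $S^\circ=-nS$ one has $\left(\frac{S^\circ-S^\circ}{2}\right)^\circ=\frac1n\left(\frac{S-S}{2}\right)^\circ$, so the lemma gives $\left(\frac{S^\circ-S^\circ}{2}\right)^\circ\subset^{opt}\frac{1}{n\|v\|^2}\cdot\frac{S-S}{2}$ once one exhibits a point $v\in\bd\bigl(\frac{S-S}{2}\bigr)$ that is simultaneously an outer normal of a closest facet. Here $v$ is taken proportional to $\sum_{i\le n/2}p^i$ (it is the centroid of the facet $\frac{F-G}{2}$ with $|F|=n/2$, $|G|=n/2+1$), and $\|v\|^2=\frac{(n+1)^2}{n^2(n+2)}$ yields the factor $\frac{n(n+2)}{(n+1)^2}$. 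The real work is the face analysis of the difference body $\frac{S-S}{2}$ — identifying the closest facet and its normal — not a dilation relation to $\conv(S\cup(-S))$.
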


\begin{proof}
In order to simplify the calculations, we assume w.l.o.g.~that $S=\conv(\{p^1,\dots,p^{n+1}\})$ with $p^j \in \R^n$, such that $\|p^j \|=1$, $j \in [n+1]$. 
\begin{enumerate}[(i)]
\item Let $n \geq 1$ be odd and $p= \frac{2}{n+1} \left( p^1+ \cdots+  p^{\frac{n+1}{2}} \right)$. Since $\sum_{i=1}^{n+1} p^i=0$, we have $-p= \frac{2}{n+1} \left( p^{\frac{n+1}{2}+1}+ \cdots+  p^{n+1} \right) \in S$
and thus $p \in S \cap (-S)$. Define 
\begin{align*}  
	H_1^{\leq} &:= \left\{x \in  \R^n: (p^1+ \dots+  p^{\frac{n+1}{2}})^T x \leq \frac{n+1}{2n}  \right\},\\
	H_2^{\leq} &:=  \left\{x \in  \R^n: (-p^1 - \dots -  p^{\frac{n+1}{2}})^T x \leq \frac{n+1}{2n}  \right\}.
\end{align*}
Then we obtain for $j=1,\dots,(n+1)/2$
\[
|(p^1+\cdots+p^{\frac{n+1}{2}})^T p^j| = \left|1 - \left(\frac{n+1}{2}-1\right) \frac1n \right| = \frac{n+1}{2n},
\]
and for $j=(n+3)/2,\dots,n+1$ 
\[
|(p^1+\cdots+p^{\frac{n+1}{2}})^T p^j| = \left|\left(\frac{n+1}{2} (-\frac1n) \right)\right| = \frac{n+1}{2n},
\]
and therefore $S\subset H_1^{\leq}\cap H_2^{\leq}$.
Moreover, 
\[
|(p^1+ \dots+  p^{\frac{n+1}{2}})^T p | =  \frac{n+1}{2n},
\]
which shows that $H_1^{\le}$ and $H_2^{\le}$ support $S$ at $p$ and $-p$, respectively. Hence Part (iii) of Proposition \ref{prop:Charact_Opt_Means} is fulfilled, proving the optimal containment of $S \cap (-S)$ in $\conv(S \cup (-S))$.

\item We start observing that
 \[	
S\cap (-S) =\left\{x \in \R^n : | (p^j)^T x | \leq \frac{1}{n}, j \in [n+1] \right\}.
\]
Now, let $p=\lambda_1 p^1+\cdots+\lambda_{n+1}p^{n+1} \in S$ be a vertex of $S\cap(-S)$ for some $\lambda_j \geq 0$, $j \in [n+1]$ with $\sum_{i=1}^{n+1}\lambda_i=1$.
Since $p$ is a vertex, there must be $n$ of the constraints $| (p^j)^T x | \leq \frac{1}{n}, j \in [n+1]$ active in $p$. Hence, 
we may assume w.l.o.g.~that there exists $0\leq m\leq \frac n 2$, s.t.~$(p^j)^Tp = \frac1n$,  $j=1,\dots, \frac n2 + m$ and $(p^j)^Tp = -\frac1n$, $j=\frac n2+m+1,\dots,n$. Thus
\[
\frac{1}{n}= (p^j)^Tp = \lambda_j - \frac{1}{n}(\lambda_1+\cdots+\lambda_{j-1}+\lambda_{j+1}+\cdots+\lambda_{n+1}) = \lambda_j - \frac{1}{n}(1-\lambda_j)
\]
for $j=1,\dots,\frac n2 + m$ and
\[
-\frac{1}{n} = (p^j)^Tp = \lambda_j - \frac{1}{n}(\lambda_1+\cdots+\lambda_{j-1}+\lambda_{j+1}+\cdots+\lambda_{n+1})=\lambda_j-\frac{1}{n}(1-\lambda_j)
\]
for $j=\frac n2 + m+1,\dots,n$. This implies $\lambda_j = \frac 2 {n+1}$ for $j=1,\dots,\frac n2 + m$  and $\lambda_j=0$ for $j=\frac n2+m+1,\dots,n$.
Hence, $0 \le \lambda_{n+1} = 1-\sum_{i=1}^{n} \lambda_i = 1 - \frac{n+2m}{n+1} = \frac{1-2m}{n+1}$, which shows that $m=0$ and 
$p=\frac{2}{n+1}p^1+\cdots+\frac{2}{n+1}p^{\frac{n}{2}}+\frac{1}{n+1}p^{n+1}$.
We obtain
\[
\|p\|^2 = \frac{1}{(n+1)^2}\left(\left( \frac{n}{2} \cdot 4  + 1 \right) +    \left(\frac{n}{2} \cdot \left( \frac{n}{2} -1 \right) \cdot 4 + \frac{n}{2} \cdot 2+ \frac{n}{2} \cdot 2 \right)  \left( -\frac{1}{n} \right) \right)= \frac{1}{n+1} 
\]
and therefore $(n+1)p \in \bd((S\cap (-S))^\circ)$. Finally, since $S^\circ=-nS$, we have 
\[
(S\cap (-S))^\circ = \conv(S^\circ \cup (-S)^\circ) = n \cdot \conv(S \cup (-S)),\]
implying $\frac{n+1}{n} p  \in \bd(\conv( S \cup (-S) ))$ and therefore
\[	
S\cap (-S) \subset^{opt} \frac{n}{n+1} \conv( S \cup (-S) ).
\]
\item 
Notice that the faces of a Minkowski sum are Minkowski sums of their faces and 
\[
v := \frac12 \left(\sum_{i=1}^{\frac n 2} \frac{p^i}{\frac n 2} - \sum_{i= \frac n 2 + 1}^{n+1} \frac{p^i}{\frac n 2 + 1}\right) \in \bd\left( \frac{S-S}{2}\right) 
\]
is also the outer normal of one of the facets of $\frac {S-S} 2$, which are the closest to the origin.

We compute $\|v\|^2$
\[
\begin{split}
\|v\|^2 & = \frac{1}{4}\left(\frac{n}{2} \frac{1}{(\frac{n}{2})^2} + \left(\frac{n}{2} + 1\right)\frac{1}{\left(\frac{n}{2}+1\right)^2} + \frac{n}{2}\left(\frac{n}{2}-1\right) \frac{1}{(\frac{n}{2})^2} \left(-\frac{1}{n}\right) \right. \\
& \left. + \left(\frac{n}{2}+1\right)\frac{n}{2} \frac{1}{(\frac{n}{2}+1)^2}\left(-\frac{1}{n}\right)
- 2 \frac{n}{2}\left(\frac{n}{2}+1\right) \frac{1}{\frac{n}{2}} \frac{1}{\frac{n}{2}+1} \left(-\frac{1}{n}\right) \right) \\
& =\frac{1}{4}\left(\frac{2}{n}+\frac{2}{n+2}-\frac{n-2}{n^2} - \frac{1}{n+2} + \frac{2}{n}\right) = \frac{(n+1)^2}{n^2(n+2)},
\end{split}
\]
Using Lemma \ref{lemma:opt_P_Ppolar} and the identity $S^\circ=-nS$ again, we obtain 
\[
\left(\frac{S^\circ-S^\circ}{2}\right)^\circ=\frac1n\left(\frac{S-S}{2}\right)^\circ\subset^{opt}\frac{1}{n\|v\|^2}\frac{S-S}{2}=\frac{n(n+2)}{(n+1)^2}\frac{S-S}{2}.
\]
\end{enumerate}
\end{proof}

As mentioned in the introduction, Theorem \ref{thm:Charact_Opt_Means_KC} states that left-to-right optimality in \eqref{eq:means_of_sets} depends only on the optimal containment of the harmonic in the arithmetic mean.

\begin{proof}[Proof of Theorem \ref{thm:Charact_Opt_Means_KC}]

The forward direction directly follows from Proposition \ref{prop:means_of_sets}. Thus we only have to show the backward direction. 

Let $\left(\frac{K^\circ+C^\circ}2 \right)^\circ \subset^{opt} \frac{K+C}2$. By Proposition \ref{prop:Opt_Containment} there exist $k\in\{2,\dots,n+1\}$, $p^j\in \bd\left(\left(\frac{K^\circ+C^\circ}2 \right)^\circ \right) \cap \bd\left(\frac{K+C}2 \right)$, $u^j\in N(\frac{K+C}2,p^j)$, $j \in [k]$, such that $0\in\conv(\{u^1,\dots,u^k\})$. Choose any $p=p^j$, $u=u^j$ with $j \in [k]$, and $\beta \in \R$, such that $H_{u,\beta}$ is the hyperplane supporting $\frac12 (K+C)$ in $p$.
Since $p \in \bd\left(\left(\frac{K^\circ+C^\circ}2 \right)^\circ \right) \cap \bd\left(\frac{K+C}2 \right)$, we have 
\[
 \left\|p \right\|_{\left(\frac{K^\circ+C^\circ}2 \right)^\circ} = \left\|p \right\|_{\frac{K+C}2} =1.
\] 
Now,
on the one hand using $\frac{p}{\left\|p \right\|_{K}} \in K$ and $\frac{p}{\left\|p \right\|_{C}} \in C$, we see 
\[ 
\frac{1}{2} \left( \frac{1}{\left\|p \right\|_{K}} +  \frac{1}{\left\|p \right\|_{C}}  \right) p \in  \frac{K+C}{2},
\]
and therefore, 
\[ 
\left\|p \right\|_{\frac{K+C}2} \leq \left( \frac{1}{2} \left( \frac{1}{\left\|p \right\|_{K}} +  \frac{1}{\left\|p \right\|_{C}}  \right) \right)^{-1}.  
\]
On the other hand, since $h_{C^{\circ}} = \left\| \cdot \right\|_{C}$ (see \cite{Sch}), we have
\[ 
 \frac12 \left(\left\|p \right\|_{K}+\left\|p \right\|_{C}\right) = \frac12 \left(h_{K^{\circ}}(p)+ h_{C^{\circ}}(p)\right) = h_{\frac{K^\circ+C^\circ}2} = \left\|p \right\|_{\left(\frac{K^\circ+C^\circ}2\right)^\circ}. 
\]

Applying the arithmetic-harmonic mean inequality (for numbers - restating the main argument for Proposition \ref{prop:means_of_sets}) we obtain
\[
\left\|p \right\|_{\frac{K+C}2} 
\leq \left( \frac{1}{2} \left( \frac{1}{\left\|p \right\|_{K}} +  \frac{1}{\left\|p \right\|_{C}}  \right) \right)^{-1} 
\leq \frac12 \left(\left\|p \right\|_{K} + \left\|p \right\|_{C}\right)  
= \left\|p \right\|_{\left(\frac{K^\circ+C^\circ}2\right)^\circ}.  
\] 
However, since $p \in \bd \left(\left(\frac{K^\circ+C^\circ}2 \right)^\circ \right) \cap \bd \left(\frac{K+C}2 \right)$, it follows that \[
\left( \frac{1}{2} \left( \frac{1}{\left\|p \right\|_{K}} +  \frac{1}{\left\|p \right\|_{C}}  \right) \right)^{-1} 
= \frac12 \left(\left\|p \right\|_{K} + \left\|p \right\|_{C}\right).\] 
This means that we have equality between the harmonic and arithmetic mean of $\|p\|_K$ and $\|p\|_C$, which implies
\[ 
\left\|p \right\|_{K}= \left\|p \right\|_{C}= \left\|p \right\|_{\frac{K+C}2} =1 
\] 
and as a direct implication 
\[ 
\left\|p \right\|_{K \cap C} = \max \{\left\|p \right\|_{K}, \left\|p \right\|_{C}\}=1 . 
\] 
Now it suffices to show that $H_{u,\beta}$ also supports $\conv(K \cup C))$ at $p$. Assume that the latter is wrong. This would imply, that there exists $q \in K \setminus C$ or $q \in C \setminus K$ such that $u^T q > \beta$, say, w.l.o.g., $q \in K \setminus C$. However, this would imply $u^T \left(\frac{p+q}2\right) > \beta$, contradicting the fact that $H_{u,\beta}$ supports $\frac{K+C}2$. Hence $H_{u,\beta}$ supports also $\conv(K \cup C)$ at $p$. 

All together we see that $p^j\in (K \cap C) \cap \bd \left(\conv(K \cup C)) \right)$, with $u^j \in N(\conv(K \cup C),p^j)$, $j \in [k]$, and $0 \in \conv(\{u^1,\dots,u^k\})$. Using Proposition \ref{prop:Opt_Containment} we obtain the optimal containment of $K \cap C$ in $\conv(K \cup C)$.
\end{proof} 

The following proposition (see \cite[Theorem 1.3]{BDG}) is a direct application of Theorem \ref{thm:Charact_Opt_Means_KC} to $C$ and $-C$. 

\begin{proposition} \label{prop:Charact_Opt_Means}
	Let $C \in \K^n$ be such that $0 \in \inter(C)$.
	Then the following are equivalent:
	\begin{enumerate}[(i)]
		\item $C \cap (-C) \subset^{opt} \conv(C \cup (-C))$,
		\item $\left(\frac12 (C^\circ-C^\circ)) \right)^{\circ}\subset^{opt} \frac12 (C-C)$,
		\item there exist $p, -p \in \bd(C)$ and parallel halfspaces $H^{\le}_{a,\rho}$ and $H^{\le}_{-a,\rho}$ supporting $C$ at
      $p$ and $-p$, respectively.
	\end{enumerate}
\end{proposition}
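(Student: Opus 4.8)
The plan is to split the three-fold equivalence into the easy part $(i)\Leftrightarrow(ii)$, which is just Theorem~\ref{thm:Charact_Opt_Means_KC} read for the pair $C,-C$, and the more hands-on part $(i)\Leftrightarrow(iii)$, which I would derive directly from the $0$-symmetric clause of Proposition~\ref{prop:Opt_Containment}. For $(i)\Leftrightarrow(ii)$ I would invoke Theorem~\ref{thm:Charact_Opt_Means_KC} with $K=C$ and with the role of ``$C$'' played by $-C$; its hypothesis holds because $0\in\inter(C)$ forces $0\in\inter(C\cap(-C))$. One then only records the formal identities $C\cap(-C)=K\cap C$, $\conv(C\cup(-C))=\conv(K\cup C)$, $(-C)^\circ=-C^\circ$ (so that $\frac12(K^\circ+C^\circ)=\frac12(C^\circ-C^\circ)$), and $\frac12(K+C)=\frac12(C-C)$; substituting these into the conclusion of the theorem gives exactly $(i)\Leftrightarrow(ii)$, with no work beyond bookkeeping.

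The content lies in $(i)\Leftrightarrow(iii)$. Since $0\in\inter(C)$, both $C\cap(-C)$ and $\conv(C\cup(-C))$ lie in $\K^n_0$, so by the last clause of Proposition~\ref{prop:Opt_Containment} condition $(i)$ is equivalent to $(C\cap(-C))\cap\bd(\conv(C\cup(-C)))\neq\emptyset$. For $(iii)\Rightarrow(i)$, starting from $p,-p\in\bd(C)$ and halfspaces $H^{\le}_{a,\rho}$, $H^{\le}_{-a,\rho}$ supporting $C$ at $p$ and $-p$, I would note that $C\subset H^{\le}_{a,\rho}$ and, rewriting the second support condition, $-C\subset H^{\le}_{a,\rho}$ as well, hence $\conv(C\cup(-C))\subset H^{\le}_{a,\rho}$; since $p\in C\subset\conv(C\cup(-C))$ with $a^Tp=\rho$, the point $p$ lies on $\bd(\conv(C\cup(-C)))$, while $p\in C$ and $-p\in C$ give $p\in C\cap(-C)$, so the intersection is nonempty. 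For $(i)\Rightarrow(iii)$, pick $p$ in that intersection and any supporting halfspace $H^{\le}_{a,\rho}$ of $\conv(C\cup(-C))$ at $p$; then $C\subset H^{\le}_{a,\rho}$ and $-C\subset H^{\le}_{a,\rho}$, the latter being $C\subset H^{\le}_{-a,\rho}$, and since $a^Tp=\rho$ forces $h_C(a)=\rho$ while $(-a)^T(-p)=\rho$ forces $h_C(-a)=\rho$, the two halfspaces support $C$ at $p$ and $-p$ with $p,-p\in\bd(C)$, which is $(iii)$.

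I do not expect a genuine obstacle; the one point needing care is that $(iii)$ insists on the \emph{same} right-hand side $\rho$ for both halfspaces, and this is exactly what a single supporting hyperplane of the $0$-symmetric body $\conv(C\cup(-C))$ at $p$ provides, since by symmetry it also supports this body at $-p$ with outer normal $-a$ and the same offset. It is equally worth being explicit that one uses the $\K^n_0$-clause of Proposition~\ref{prop:Opt_Containment}, which reduces $(i)$ to the existence of a single common boundary point rather than to a balanced configuration of outer normals.
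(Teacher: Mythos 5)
Your proof is correct, and the overall route matches the paper's intent: the paper dispatches the proposition in a single sentence, saying it is ``a direct application of Theorem~\ref{thm:Charact_Opt_Means_KC} to $C$ and $-C$'' (and citing \cite[Theorem~1.3]{BDG}), which is precisely your bookkeeping derivation of $(i)\Leftrightarrow(ii)$ together with the identities $(-C)^\circ=-C^\circ$ and $\inter(C)\cap\inter(-C)=\inter(C\cap(-C))$. Where you diverge slightly is in $(i)\Leftrightarrow(iii)$: the paper leaves this implicit (the parallel-supporting-halfspace condition is effectively extracted inside the proof of Theorem~\ref{thm:Charact_Opt_Means_KC} via the common touching point $p$ and the hyperplane $H_{u,\beta}$), whereas you prove it directly and more economically from the $\K^n_0$ clause of Proposition~\ref{prop:Opt_Containment}, reducing $(i)$ to the nonemptiness of $(C\cap(-C))\cap\bd(\conv(C\cup(-C)))$ and then translating a supporting hyperplane of the $0$-symmetric hull at $p$ into the two parallel halfspaces supporting $C$ at $p$ and $-p$. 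This avoids re-invoking the arithmetic--harmonic machinery and is a genuinely lighter argument for that leg; the only point to be pedantic about, which you do address, is that the same right-hand side $\rho$ in $H^{\le}_{a,\rho}$ and $H^{\le}_{-a,\rho}$ is supplied automatically by the $0$-symmetry of $\conv(C\cup(-C))$.
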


In case the containment in \eqref{eq:means_of_sets} is left-to-right optimal for some Minkowski centered $C$ and $K=-C$, there also exist Minkowski centered bodies with an arbitrary smaller asymmetry providing left-to-right optimality in the full chain. 

\begin{lemma}\label{lem:Asym_Descent_Chain}
Let $C \in \K^n$ be Minkowski centered. If
	\[
	C \cap (-C) \subset^{opt} \conv( C \cup (-C) ),
	\] 	
	then for every $s\in[1,s(C)]$ there exists a Minkowski centered $C_s \in \K^n$ with $s(C_s)=s$, such that
	\[
	C_s \cap (-C_s) \subset^{opt} \conv( C_s \cup (-C_s) ).
	\] 	
\end{lemma}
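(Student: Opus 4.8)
The plan is to produce the bodies $C_s$ by a simple interpolation between $C$ and the symmetrization $C\cap(-C)$, and to check that the touching/supporting data guaranteed by Proposition \ref{prop:Charact_Opt_Means} survives the interpolation. Concretely, for $\lambda\in[0,1]$ set
\[
C_\lambda := \lambda\, C + (1-\lambda)\bigl(C\cap(-C)\bigr).
\]
Since $C$ and $C\cap(-C)$ are both Minkowski centered (recall $0\in\inter(C\cap(-C))$ and $C\cap(-C)\subset s(C\cap(-C))(-(C\cap(-C)))$ with $s(C\cap(-C))=1$), and Minkowski centeredness is preserved under Minkowski sums of Minkowski centered bodies sharing the center $0$, each $C_\lambda$ is Minkowski centered. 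Moreover $s(C_0)=1$, $s(C_1)=s(C)$, and $\lambda\mapsto s(C_\lambda)$ is continuous (continuity of $s$ w.r.t.~the Hausdorff metric, as noted after the definition of the Minkowski center), so by the intermediate value theorem every value $s\in[1,s(C)]$ is attained by some $C_\lambda$. This gives the required family; it remains to transfer the optimal containment.

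The second step is to apply Proposition \ref{prop:Charact_Opt_Means}: the hypothesis $C\cap(-C)\subset^{opt}\conv(C\cup(-C))$ is equivalent to the existence of $p,-p\in\bd(C)$ and parallel supporting halfspaces $H^{\le}_{a,\rho}$, $H^{\le}_{-a,\rho}$ of $C$ at $p$ and $-p$. I would check that the same direction $a$ and the same point (suitably rescaled along the ray through $p$) works for $C_\lambda$. Because $H^{\le}_{a,\rho}$ supports $C$ at $p$ and, by $0$-symmetry of $C\cap(-C)$ together with $p\in C$ (so $p,-p\in C\cap(-C)$ after scaling — more precisely $h_{C\cap(-C)}(a)=\max\{h_C(a),h_{-C}(a)\}=h_C(a)=\rho$ since $h_{-C}(a)=h_C(-a)=\rho$ by the parallel-support condition), the halfspace $H^{\le}_{a,\rho}$ also supports $C\cap(-C)$, and it does so at a point on the ray $\R_{>0}p$. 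Then $h_{C_\lambda}(a)=\lambda h_C(a)+(1-\lambda)h_{C\cap(-C)}(a)=\rho$, and the unique (by strict separation of the other points) maximizer of $a^Tx$ over $C_\lambda$ is the corresponding convex combination $p_\lambda$, a positive multiple of $p$; by symmetry $-p_\lambda\in\bd(C_\lambda)$ with parallel supporting halfspace in direction $-a$. Hence condition (iii) of Proposition \ref{prop:Charact_Opt_Means} holds for $C_\lambda$, giving $C_\lambda\cap(-C_\lambda)\subset^{opt}\conv(C_\lambda\cup(-C_\lambda))$.

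The main obstacle is the bookkeeping needed to guarantee that the supporting point for $C_\lambda$ in direction $a$ is genuinely a boundary point of $C_\lambda$ lying on the line $\R p$, i.e.~that the maximizer of $a^Tx$ over $C$ and the maximizer over $C\cap(-C)$ can be taken to be $p$ and a positive multiple of $p$ respectively, so that their Minkowski combination is again a multiple of $p$. This is where one must be slightly careful: $H^{\le}_{a,\rho}$ may support $C$ along a whole face, and $p$ need only be one point of it; but since $p\in\bd(C)\cap H_{a,\rho}$ and $-p\in\bd(C)\cap H_{-a,\rho}$, the point $p$ itself is a valid maximizer for $C$ in direction $a$ and $p\in C\cap(-C)$ with $a^Tp=\rho=h_{C\cap(-C)}(a)$, so $p$ is simultaneously a maximizer for both summands and $p\in\bd(C_\lambda)$. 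Thus $p$ (and $-p$) with direction $\pm a$ already witness (iii) for every $C_\lambda$, and no rescaling is even needed. The rest is routine, so I would keep this part short.
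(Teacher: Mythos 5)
Your touching-condition argument is essentially correct, modulo the small slip that $h_{C\cap(-C)}(a)$ is not $\max\{h_C(a),h_{-C}(a)\}$; the right justification is $h_{C\cap(-C)}(a)\le\min\{h_C(a),h_{-C}(a)\}=\rho$ together with $p\in C\cap(-C)$ and $a^Tp=\rho$, forcing equality. The genuine gap is the assertion that ``Minkowski centeredness is preserved under Minkowski sums of Minkowski centered bodies sharing the center $0$'': this is \emph{false}. Take the Minkowski centered triangle $C=\conv\{(2,0),(-1,1),(-1,-1)\}$ (so $s(C)=2$) and the $0$-symmetric, hence Minkowski centered, thin diamond $B_\varepsilon=\conv\{(0,\pm1),(\pm\varepsilon,0)\}$. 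As $\varepsilon\to0$, $C+B_\varepsilon$ tends to the trapezoid $T=\conv\{(2,\pm1),(-1,\pm2)\}$, and one checks via Proposition \ref{prop:Opt_Containment} (touching normals $(1,0)$, $(-1,3)$, $(-1,-3)$) that $T-\tfrac15e^1\subset^{opt}\tfrac32\bigl(\tfrac15e^1-T\bigr)$, so $s(T)=\tfrac32$ with Minkowski center $\tfrac15e^1$; from the origin, however, the minimal factor $\rho$ with $T\subset\rho(-T)$ equals $2$. Hence for small $\varepsilon>0$, $0$ is not a Minkowski center of $C+B_\varepsilon$, and your general principle cannot be invoked. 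Since the lemma requires the bodies $C_s$ to be Minkowski centered, this is a material gap, not a cosmetic one.

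Your specific $C_\lambda=\lambda C+(1-\lambda)(C\cap(-C))$ does in fact turn out to be Minkowski centered with $s(C_\lambda)=\lambda s(C)+1-\lambda$, but establishing this needs an argument your write-up does not contain: one must prove $h_{C_\lambda}(u)\le(\lambda s(C)+1-\lambda)\,h_{C_\lambda}(-u)$ for all $u$ (this reduces to $h_C(u)-h_C(-u)\le(s(C)-1)\,h_{C\cap(-C)}(u)$, which follows from $\tfrac1{s(C)}C\subset C\cap(-C)$), and then exhibit, via Proposition \ref{prop:Opt_Containment}, touching normals of $C_\lambda$ inside $(\lambda s(C)+1-\lambda)(-C_\lambda)$ whose convex hull contains $0$; the $u^j$ certifying $C\subset^{opt}s(C)(-C)$ serve, because $h_{C\cap(-C)}(u^j)=h_C(-u^j)$ there. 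The paper avoids this entirely by taking $K_t=\conv\{p^1,\dots,p^k,\alpha_tp^1,\dots,\alpha_tp^k,\pm p\}$ with $\alpha_t=-((1-t)s(C)+t)$, built directly from the touching data of $C\subset^{opt}s(C)(-C)$ and from the points $\pm p$ of Proposition \ref{prop:Charact_Opt_Means}(iii); both the Minkowski centeredness of $K_t$ and $s(K_t)=-\alpha_t$ then drop out immediately from Proposition \ref{prop:Opt_Containment}, and $\pm p\in K_t$ gives part (iii) of Proposition \ref{prop:Charact_Opt_Means} for free. Your interpolation is a legitimate alternative idea, but it replaces the paper's elementary touching-point certificate by a non-obvious support-function estimate that you have not supplied.
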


\begin{proof}
Since $C$ is Minkowski centered, we obtain from Proposition \ref{prop:Opt_Containment} that there exist $p^1, \dots, p^k \in - \frac{1}{s(C)}C \cap \bd(C)$ with $k\in\{2,\dots,n+1\}$ and $u^j \in N(C,p^j)$, $j \in [k]$, s.t.~$0 \in \conv(\{u^1, \dots, u^k\})$. By Part (iii) of Proposition \ref{prop:Charact_Opt_Means} there also exist $p,-p\in (C\cap(-C)) \cap \bd(\conv(C\cup(-C)))$. For $t \in [0,1]$ let us define 
\[K_t:=\conv\left(\{p^1, \dots, p^k, \alpha_t p^1,  \dots,\alpha_t p^k,\pm p\}\right),\] with $\alpha_t:= -((1-t)s(C)+t)$. 
One may recognize, that since $\alpha_0 = -s(C)$ and $\alpha_0 p^j\in C$, $j \in [k]$, we have $K_t \subset C$. 
By the fact that $\pm p \in  K_t$, we have $\pm p \in (K_t \cap (-K_t)) \cap \bd(\conv(K_t\cup(-K_t))$ for all $t \in [0,1]$. Hence, $K_t$ fulfills Part (iii) of Proposition \ref{prop:Charact_Opt_Means} and therefore the optimal containment.
Moreover, $K_t \subset \alpha_t K_t$ with $\alpha_t p^j \in K_t \cap \bd(\alpha_t K_t)$ and $-u^j\in N(\alpha_t K_t,\alpha_t p^j)$, $j \in [k]$. Thus by Proposition \ref{prop:Opt_Containment} $K_t$ is optimally contained in $\alpha_t K_t$, which shows that $s(K_t)= - \alpha_t = (1-t)s(C) + t \in [1,s(C)]$. Choosing $C_s:=K_{\frac{s(C)-s}{s(C)-1}}$ concludes the lemma.
\end{proof}

Using Proposition \ref{prop:Schneider} we now prove Theorem \ref{thm:minMax_mean_improved}. 
\begin{proof}[Proof of Theorem \ref{thm:minMax_mean_improved}]  First of all let $\rho=d_{BM}(C,S)$, where $S$ is a regular Minkowski centered $n$-simplex and $\varepsilon := n - s$. Since $\gamma_3 > \gamma_2 > n - \frac{1}{n}$, we see that $C$ is under the conditions of Proposition \ref{prop:Schneider}. Hence, we may use \eqref{eq:Schne_Stabil} to obtain 
\begin{equation}\label{eq:rho}
\rho \leq 1+\frac{(n+1)\varepsilon}{1-n\varepsilon}=\rho_*.  
\end{equation}
Let $S=\conv(\{p^1,\dots,p^{n+1}\})$ be a regular Minkowski centered $n$-simplex, i.e., $\|p^i-p^j\|=const$ for all $i \neq j$, $i,j \in [n+1]$ with $\|p^i\|=n$. Moreover, let $F_i=\conv(\{p^j : j \neq i\})$ be the facet of $S$ opposing $p^i$ and  $L_i:=\aff(F_i)$ with $i \in [n+1]$. 
Applying a suitable regular linear transformation $L$, we may assume $c^1+S\subset L(C)\subset c^2+\rho S$ for some $c^1,c^2 \in\R^n$.
Since by Corollary \ref{cor:invariant} $L(C)\cap(-L(C))\subset  \gamma \cdot \conv(L(C)\cup(-L(C)))$ for some $\gamma>0$ is equivalent to $C\cap(-C)\subset \gamma \cdot\conv(C\cup(-C))$, we can replace w.l.o.g.~$C$ by $L(C)$ and assume 
\begin{equation}\label{eq:cont}
c^1+S \subset C \subset c^2+\rho S. 
\end{equation}

Let $\bar \mu \le 1$ be the minimal distance from $0$ to the facets of $c^1+S$, which is attained at $c^1+L_i$ for some $i\in[n+1]$. Since $C$ is Minkowski centered and $c^1+S \subset C$, we have $z:= c^1+p^i \in C$ and therefore $\frac {-z} {s(C)}  = \frac{-z}{n-\varepsilon} \in C$. 
Then  
\[
\begin{split}
d(z,\frac{p^i}n+L_i) + \bar \mu & = d(z,\frac {p^i}n + L_i) + d(\frac {p^i}n + L_i,c^1+L_i)\\
& =d(z,c^1+L_i) = n+1
\end{split}
\]
and we obtain
\begin{equation} \label{eq:xi1}
\xi:=d(\frac{-z}{n-\varepsilon},\frac{p^i}{n}+L_i)=\frac{d(z,\frac{p^i}n + L_i)}{n-\varepsilon} = \frac{n+1-\bar \mu}{n-\varepsilon}.
\end{equation}
Now we also have that $\frac{-z}{n-\varepsilon} \in C \subset c^2+\rho S$.
Since 
\[
d(c^1+L_i,c^2+\rho L_i)\leq (n+1)\rho-(n+1) = (n+1)(\rho-1),
\]
we see that 
\begin{equation} \label{eq:xi2}
\begin{split}
\xi & \leq d(c^2+\rho L_i, \frac {p^i}n + L_i)\\ 
&\leq d(c^2+\rho L_i,c^1+L_i)+d(c^1+L_i,\frac{p^i}n + L_i)\\
    & \leq (n+1)(\rho-1)+\bar \mu.
\end{split}
\end{equation}
Combining \eqref{eq:xi1} and \eqref{eq:xi2}, we obtain
\[
\frac{n+1-\bar \mu}{n-\varepsilon} \le (n+1)(\rho-1) +\bar \mu,
\]
which is equivalent to
\begin{equation}\label{eq:mu}
\bar \mu \geq \frac{n+1}{n+1-\varepsilon}(1-(n-\varepsilon)(\rho-1))=:\mu.
\end{equation}

Since $d(0,c^1+L_j) \geq \mu$ for every $j \in [n+1]$, this directly rewrites as
\begin{equation*}\label{eq:0_in_c^1_S}
0\in c^1+(1-\mu) S.
\end{equation*}
Now since
\[
0 \in c^1+(1-\mu) S \subset c^1+S \subset c^2+\rho S
\]
it holds $d(0,c^2+\rho L_j)\geq d(0,c^1+L_j)\geq \mu$ for every $j\in[n+1]$, which rewrites as
\begin{equation*}\label{eq:0_in_c^2_S}
0\in c^2+(\rho-\mu)S.
\end{equation*}
Moreover, using
\[
d(0,c^2+\rho L_j) \leq d(c^2+(\rho-\mu)p^j, c^2 + \rho L_j) = \rho + n(\rho-\mu)
\]
for every $j\in[n+1]$, thus 
\begin{equation}\label{eq:c_2_S_in_S}
c^2+\rho S \subset (\rho+n(\rho-\mu)) S.
\end{equation}
Moreover, since $d(0,c^1+L_j)\geq \mu$ for every $j\in[n+1]$, then 
\begin{equation}\label{eq:S_in_c^1_S}
\mu S \subset c^1+S.
\end{equation}
\begin{enumerate}[(i)]
\item 
Combining \eqref{eq:c_2_S_in_S} and \eqref{eq:S_in_c^1_S} with (ii) of Lemma \ref{lem:Simplex_Odd}
directly imply
\[
\begin{split}
C\cap(-C) & \subset (c^2+\rho S)\cap(-c^2-\rho S)\\
& \subset (\rho+n(\rho-\mu))(S \cap(-S)) \\
& \subset \frac{n}{n+1} (\rho+n(\rho-\mu))\mathrm{conv}(S\cup(-S))\\
& \subset \frac{n}{n+1} \frac{(\rho+n(\rho-\mu))}{\mu}\mathrm{conv}((c^1+S)\cup(-c^1-S))\\
& \subset\frac{n}{n+1}\frac{(\rho+n(\rho-\mu))}{\mu}\mathrm{conv}(C\cup(-C)).
\end{split}
\]
Since $\rho+n(\rho-\mu)$ is increasing in $\rho$, we obtain
\[
C\cap(-C)\subset\frac{n}{n+1}\frac{(\rho_*+n(\rho_*-\mu))}{\mu}\mathrm{conv}(C\cup(-C)). 
\]
Finally, from \eqref{eq:rho} and \eqref{eq:mu} (and remembering that $\varepsilon = n -s$) we obtain
\[
C \cap (-C) \subset \psi \frac{n}{n+1} \conv(C\cup(-C)),\] where $\psi =\frac{(n-s+1)(s+1)}{1-n(n-s)(n+s(n+1))} - n$.

Notice that solving $\psi \frac{n}{n+1} = 1$ becomes a quadratic in $s$ equation and the unique positive root has the expression
\[
\gamma_2 = \frac{n^4+n^3+2n^2+\sqrt{n^8+6n^7+17n^6+28n^5+28n^4+12n^3-4n^2-12n-4}}{2(n^3+2n^2+3n+1)}.
\]
Using Lemma \ref{lem:Asym_Descent_Chain}
we conclude that $\psi \frac{n}{n+1} < 1$, whenever $s(C) > \gamma_2$.
\item From \eqref{eq:cont}, \eqref{eq:c_2_S_in_S}, and \eqref{eq:S_in_c^1_S} we obtain  
\[\mu S\subset c^1+S \subset C \subset c^2+\rho S  \subset(\rho+n(\rho-\mu))S.\]
Thus, using (iii) of Lemma \ref{lem:Simplex_Odd} this shows
\[
\begin{split}
\left(\frac{C^\circ-C^\circ}{2}\right)^\circ & \subset\left(\frac{(c_2+\rho S)^\circ-(c_2+\rho S)^\circ}{2}\right)^\circ
 \subset(\rho+n(\rho-\mu))\left(\frac{S^\circ-S^\circ}{2}\right)^\circ\\
& \subset \frac{n(n+2)}{(n+1)^2} (\rho+n(\rho-\mu))\frac{S-S}{2}\\
& =\frac{n(n+2)}{(n+1)^2} (\rho+n(\rho-\mu))\frac{(c_1+S)-(c_1+S)}{2}\\
& \subset \frac{n(n+2)}{(n+1)^2}(\rho+n(\rho-\mu))\frac{C-C}{2}
\end{split}
\]
and since $\rho+n(\rho-\mu)$ is increasing in $\rho$, we obtain
\[
\left(\frac{C^\circ-C^\circ}{2}\right)^\circ \subset \frac{n(n+2)}{(n+1)^2} (\rho_*+n(\rho_*-\mu))\frac{C-C}{2}.
\]
Finally, combining \eqref{eq:rho} and \eqref{eq:mu} and remembering that  $\varepsilon=n-s$, we obtain
\[
\left(\frac{C^\circ-C^\circ}{2}\right)^\circ \subset \mu \psi \frac{n(n+2)}{(n+1)^2}  \frac{C-C}{2}.
\]
Notice that $\mu \psi \frac{n(n+2)}{(n+1)^2} = 1$ also becomes a quadratic equation in $s$, which has a unique positive root of expression
\[
\gamma_3 = \frac{n^4+3n^3+2n^2+1+\sqrt{n^8+6n^7+13n^6+8n^5-14n^4-22n^3+8n+1}}{2(n^3+2n^2+2n)}.
\]
By Lemma \ref{lem:Asym_Descent_Chain}
we conclude that $\mu \psi \frac{n(n+2)}{(n+1)^2} < 1$, whenever 
\[
s(C)>\frac{n^4+3n^3+2n^2+1+\sqrt{n^8+6n^7+13n^6+8n^5-14n^4-22n^3+8n+1}}{2(n^3+2n^2+2n)},
\]
which itself is greater than $n- \frac1n$.
\end{enumerate}
\end{proof}

We now provide the proof of Theorem \ref{thm:gamma}. 
\begin{proof}[Proof of Theorem \ref{thm:gamma}] 
From Theorem \ref{thm:minMax_mean_improved} we directly obtain 
$\gamma(n) \le \gamma_2$ for even $n$.

In order to obtain a lower bound on $\gamma(n)$ in even dimensions, we provide a suitable family of sets with left-to-right optimal containment in \eqref{eq:means_of_sets} and asymmetry $\gamma_1 > n-1$ by extending the construction of the Golden House in \cite{BDG}. Note that this construction also holds in odd dimensions.

Let $S$ be a Minkowski centered regular $n$-simplex such that without loss of generality $S=\conv(\{p^1,\dots,p^{n+1}\})$ with $p^j \in \R^n$ and $\|p^j \|=1$, $j \in [n+1]$. Note that $(p^i)^Tp^j=-1/n$ for $i \not =j$. Now we define $C=\conv(\{p^1,\dots,p^{n+1}\})\cap H^{\pm}$ with $H^{\pm} := \{ x \in \R^n : \pm (p^1-p^2)^Tx \leq \eta \}$, where $\eta=(p^1-p^2)^T p \in (0,1+\frac1n)$ and $p=(1-\lambda)p^1+\lambda u^2 \in \bd(H^+)$ for some $\lambda \in[0,1]$. 
Then $\eta = 1 - \lambda - \lambda + \frac{1 - \lambda}{n} - \frac{\lambda}{n}$ and therefore
\begin{equation}\label{lemma3.6:lambda}
\lambda=\frac{1+\frac1n-\eta}{2(1+\frac1n)}.
\end{equation}
Because of the symmetry of $C$ with respect to the axis orthogonal to $\aff(p^1,p^2)$, there exist $\nu \in \R$ and $s \in [1,n]$ such that 
\begin{equation*} 
\nu(p^1+p^2)-\frac1sC \subset^{opt} C, 
\end{equation*}
which can be rewritten as
\begin{equation}\label{lemma3.6:c}
-\frac1s(C-c) \subset^{opt} C-c \quad \text{for} \quad c=\frac{s}{s+1}\nu(p^1+p^2),
\end{equation}
such that $c$ is the Minkowski center of $C$.

\begin{figure}[H]
\centering
    \begin{tikzpicture}[scale=1.5]
    \draw [thick] (0,1.61) -- (1,0)-- (1,-1)-- (-1,-1)-- (-1,0)--(0,1.61);
    \draw [thick, dashed, rotate around={180:(0,0)}, scale=0.615] (0,1.61) -- (1,0)-- (1,-1)-- (-1,-1)-- (-1,0)--(0,1.61);;
    \draw [thick, gray] (1,-1.2)-- (1,0.2);
    \draw [thick, gray] (-1,-1.2)-- (-1,0.2);
    \draw [thick, dashed, gray] (1,0)-- (1.6,-1);
    \draw [thick, dashed,gray] (1.6,-1)-- (1,-1);
    \draw [thick, dashed, gray] (-1,0)-- (-1.6,-1);
    \draw [thick, dashed,gray] (-1.6,-1)-- (-1,-1);

    \draw [fill] (0,0) circle [radius=0.01];
    \draw [fill] (1,0) circle [radius=0.02];
    \draw [fill] (1,-1) circle [radius=0.02];
    \draw [fill] (-1,-1) circle [radius=0.02];
    \draw [fill] (-1,0) circle [radius=0.02];
    \draw [fill] (0,1.61) circle [radius=0.02];
    \draw [fill] (0.61,0) circle [radius=0.02];
    \draw [fill] (-0.61,0) circle [radius=0.02];
    \draw [fill] (0,-1) circle [radius=0.02];
    \draw [fill] (0.61,0.61) circle [radius=0.02];
    \draw [fill] (-0.61,0.61) circle [radius=0.02];
    \draw [fill] (-1.6,-1) circle [radius=0.02];
    \draw [fill] (1.6,-1) circle [radius=0.02];
    
    \draw (-0.1,-0.1) node {$c$};
    
    \draw (1.4,0.4) node {$\bd(H^+)$};
    \draw (-1.4,0.4) node {$\bd(H^-)$};
    \draw (0,1.85) node {$p^3$};
    \draw (-1.8,-1.1) node {$p^2$};
    \draw (1.8,-1.1) node {$p^1$};
    \draw (-1.5,0.8) node {$\nu (p^1+p^2)- \frac{p}{s}$};
    \draw (1.2,-1.2) node {$p$};
    \draw (1.2,-0.02) node {$q$};
    
     \end{tikzpicture}
\caption{Construction from the proof of Theorem \ref{thm:gamma} for $n=2$: \\ $C$ (black), $-\frac1s C$ (dashed) and $\bd(H^+)$, $\bd(H^-)$ (gray dashed). 
}
\end{figure}
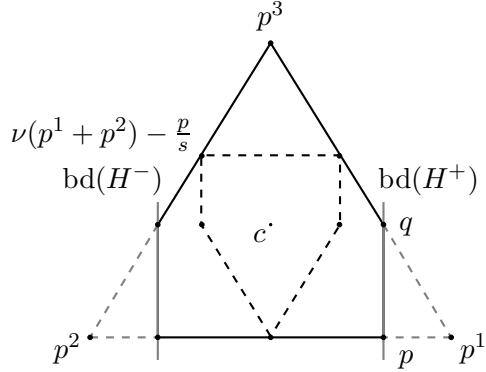

Since $\nu(p^1+p^2)-\frac{p}{s}$ and $\nu(p^1+p^2)-\frac{p^3}{s}$ belong to the facets of $S$ with outer normals $-p^1$ and $-p^3$, respectively, we obtain
\[
\left( \nu(p^1+p^2)-\frac{p}{s}\right)^T(-p^1) =\frac1n
\quad\text{ and } \quad
\left( \nu(p^1+p^2)-\frac{p^3}{s} \right)^T (-p^3) =\frac1n.
\]
The latter two conditions translate into
\[
-\left(\nu-\frac{1-\lambda}{s}\right)+\left(\nu-\frac{\lambda}{s}\right)\frac1n=\frac1n 
\quad\text{ and } \quad
\frac{2\nu}{n}+\frac1s = \frac1n,
\]
which can be simplified to
\begin{equation}\label{lemma3.6:mu}
s=n-2\lambda \quad \text{ and } \quad \nu=\frac{\lambda}{2\lambda-n}.
\end{equation}
Inserting \eqref{lemma3.6:lambda} we obtain
\begin{equation}\label{lemma3.6:mu1a}
s=\frac{n- \frac1n+\eta}{1+\frac1n} \quad \text{ and } \quad
\nu=\frac{1+\frac1n - \eta}{2 \left( \frac1n -n- \eta \right)}.
\end{equation}
Next, let $q=c+\xi(p^1-p^2)$, with $\xi>0$ such that $q \in \bd(H^+)$, which belongs to the facet of $S$ with outer normal $-p^2$. 
Using $(p^1+p^2)^T(p^1-p^2)=0$, $(p^1-p^2)^T(p^1-p^2)=2 \left( 1+ \frac1n \right)$ and \eqref{lemma3.6:c}, this implies
\begin{equation} \label{eq:insert-nu}
\left(\frac{s}{s+1}\nu+\xi\right)\frac1n-\left(\frac{s}{s+1}\nu-\xi\right) = \frac1n \quad \text{and} \quad  \eta = 2 \xi \left(1+\frac1n\right).
\end{equation}
Inserting $\eta$ from \eqref{eq:insert-nu} into \eqref{lemma3.6:mu1a} leads to
\[
s = n - 1 + 2\xi \quad \text{ and } \quad
\nu=\frac{1+\frac1n - 2\xi\left(1+\frac1n\right)}{2 \left( \frac1n -n- 2\xi \left(1+\frac1n\right)\right)}=\frac{1-2\xi }{1-2\xi-n}.
\] 
and inserting this result for $\nu$ in \eqref{eq:insert-nu} gives us
\begin{equation*}
\left(\frac{s}{s+1}\frac{1-2\xi }{1-2\xi-n}+\xi\right)\frac1n-\left(\frac{s}{s+1} \frac{1-2\xi }{1-2\xi-n}-\xi\right) = \frac1n,
\end{equation*}
which one can solve for $\xi$ and with it for $s$ to obtain
\[
\xi=\frac{1-n+\sqrt{(n-2)n+5}}{4} \quad \text{ and } \quad s=n+2\xi-1=\frac{n-1+\sqrt{(n-2)n+5}}{2} = \gamma_1.
\]

Since condition (iii) of Proposition \ref{prop:Charact_Opt_Means} is fulfilled
for the Minkowski centered $C-c$ at the points $\pm \xi(p^1-p^2)$, \eqref{eq:means_of_sets} is optimal for $C-c$ and $c-C$,
and $s(C-c)=\gamma_1$, as desired.

Finally, by Lemma \ref{lem:Asym_Descent_Chain} we see that for
$s \le \gamma_1$ there exists a Minkowski centered $C\in\K^n$ such that $C\cap(-C)\subset^{opt}\conv(C\cup(-C))$, proving
$\gamma(n) \ge \gamma_1$.
\end{proof}

\section{Reverse containment}\label{sec:reverse_inclusions}
In this section we prove Theorem \ref{thm:reverse_inclusions}. While the proof of Parts (i)-(v) is straightforward, understanding (vi) needs some additional effort: on the one hand, we show that $C=S\cap(-sS)$, where $S$ is a Minkowski regular simplex, provides optimality in (vi) for each $s\in[1,n]$, while on the other hand, we find a more intriguing family of sets not fulfilling that optimality (see Example \ref{ex:notopt}).

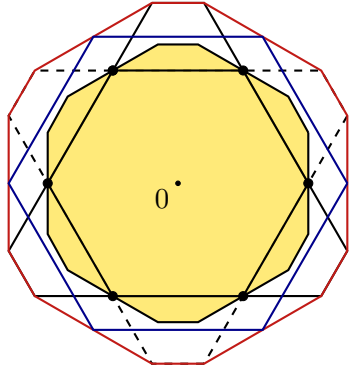
\begin{figure}[H]
\centering
    \begin{tikzpicture}[scale=3]
    \draw [thick, black] (-0.11547,0.8) -- (0.11547,0.8) -- (0.75038,-0.2997)-- (0.6348,-0.5)-- (-0.6348,-0.5)-- (-0.75038,-0.2997)--(-0.11547,0.8);
    \draw [thick, dashed] (0.11547,-0.8) -- (-0.11547,-0.8) -- (-0.75038,0.2997)-- (-0.6348,0.5)-- (0.6348,0.5)-- (0.75038,0.2997)--(0.11547,-0.8);
    \draw [thick, dred] (0.75038,-0.2997) -- (0.6348,-0.5)-- (0.11547,-0.8) -- (-0.11547,-0.8)--(-0.6348,-0.5)-- (-0.75038,-0.2997)--(-0.75038,0.2997)-- (-0.6348,0.5)--(-0.11547,0.8) -- (0.11547,0.8)--(0.6348,0.5)-- (0.75038,0.2997)--(0.75038,-0.2997);
    \draw [thick, dblue] (0.75038,0) -- (0.375135,-0.65) -- (-0.375135,-0.65)-- (-0.75038,0) -- (-0.375135,0.65)--  (0.375135,0.65)-- (0.75038,0);
    \draw [thick, fill=lgold, fill opacity=0.7] (0.5774,0.225)--(0.5774,-0.225) --(0.4887,-0.3849) -- (0.089,-0.6158) -- (-0.089,-0.6158) -- (-0.4887,-0.3849) -- (-0.5774,-0.225) --(-0.5774,0.225) --(-0.4887,0.3849) --(-0.089,0.6158) --(0.089,0.6158) --(0.4887,0.3849)--(0.5774,0.225);
    \draw [thick, black] (0.5774,0) -- (0.28885,-0.50035) -- (-0.28885,-0.50035) -- (-0.5774,0) -- (-0.28885,0.50035) -- (0.28885,0.50035) -- (0.5774,0);
    \draw [fill] (0,0) circle [radius=0.01];
    \draw [fill] (0.5774,0) circle [radius=0.02];
    \draw [fill] (-0.28885,0.50035) circle [radius=0.02];
    \draw [fill] (-0.28885,-0.50035) circle [radius=0.02];
    \draw [fill] (-0.5774,0) circle [radius=0.02];
    \draw [fill] (-0.28885,0.50035) circle [radius=0.02];
    \draw [fill] (0.28885,0.50035) circle [radius=0.02];
     \draw [fill] (0.28885,-0.50035) circle [radius=0.02];
    \draw (-0.07,-0.07) node {$0$};
     \end{tikzpicture}
\caption{$C=S \cap (-s S)$ (where $S$ is a regular triangle and $s=s(C)=1.5$, c.f.~Remark \ref{ex:partial-symm-of-simplex}) (black), $-C$ (dashed), $C \cap (-C)$ (convex hull of black points), $\conv (C \cup (-C))$ (red), $\frac{C-C}{2}$ (blue), and  $\left(\frac{C^\circ+(-C)^\circ}{2}\right)^{\circ}$ (yellow). 
}
\end{figure}

\begin{remark}\label{ex:partial-symm-of-simplex}
Let $C=S\cap(-sS)$, where $S=\conv(\{p^1,\dots,p^{n+1}\})$ with $\|p^i\|=1$, $i \in [n+1]$, is a regular simplex centered at $0$, and $s\in[1,n]$. Notice that
$C=S\cap(-sS)\subset s^2S\cap(-sS)=-sC$.
Let $G_i=\{x\in S:(p^i)^T x = -\frac 1 n\}$ and $F_i=S\cap(-sG_i)$ with $i\in[n+1]$, then 
$G_i \subset \bd(C)$ and therefore $F_i\subset \bd(-sC)$. 
Moreover, the points $\frac s n p^i$ belong to $C \cap \bd(-sC)$, $i \in [n+1]$, with $p^i$ being a normal vector of $-sC$ in $\frac s n p^i$. Thus by Proposition \ref{prop:Opt_Containment} we conclude that $C \subset^{opt}-sC$ and therefore that $C$ is Minkowski centered and $s(C)=s$.
\end{remark}

\begin{proof}[Proof of Theorem \ref{thm:reverse_inclusions}]
We begin the proof by showing 
containments in Parts (i) - (v).

For Part (ii) notice that $-C \subset sC$ directly implies $(s+1)(-C) \subset s(C-C)$, and therefore $-C \subset \frac{2s}{s+1} \frac{C-C}{2}$. Using the $0$-symmetry of $\frac {C-C}2$, we obtain $\conv(C\cup(-C)) \subset \frac{2s}{s+1} \frac{C-C}{2}$.

Part (iii) now follows directly from Part (ii) by using Proposition \ref{lem:polar}.

Since $-C\subset sC$, we have $\frac{C-C}{2} \subset \frac{s+1}{2}C$. From the $0$-symmetry of $\frac{C-C}{2}$, we obtain $ \frac{C-C}{2} \subset \frac{s+1}{2} (C\cap(-C))$, which yields Part (iv).

Part (v) then follows from Part (iv) by using Proposition \ref{lem:polar} again.

Finally, notice that Parts (ii) and (iv) together directly yield Part (i). 

We now show that the containment in (i) is optimal for every $C$. Indeed, 
let $p\in -C\cap \bd(sC)$. Let $H$ be a hyperplane supporting $sC$ at $p$, then $H'$, which is the translation of $H$ with
$p/s\in H'$ supports $-C$ at $p/s$, from which $p\in\bd(\conv(C\cup(-C)))$. Moreover, since $p\in s C\cap(-C)$ and $p\in\bd(sC)$, then $p\in \bd(s(C\cap(-C)))$, and thus Proposition \ref{prop:Opt_Containment} yields the result.

Since Part (i) shows optimality for every $C$, and since either joining Parts (ii),(iv) or (iii),(v) recovers (i), each of the Parts (ii)-(v) must be optimal for every $C$. 

The proof of Part (vi) is a bit more subtle. 
Let $\frac{C-C}{2} \subset^{opt} \alpha \left(\frac{C^\circ-C^\circ}{2}\right)^\circ$ for some $\alpha \ge 1$. 
Using the polarity of gauge and support function and the linearity of the latter with respect to Minkoswki addition, it holds
$\|z \|_{\left(\frac{C^\circ-C^\circ}{2}\right)^\circ} = h_{\frac{C^\circ - C^\circ}{2}}(z) = \frac12 \left(h_{C^\circ}(z) + h_{-C^\circ}(z)\right) = \frac12 (\|z \|_C+ \|z \|_{-C})$. From this and Part (iv) we obtain 
\begin{align} \label{eq:AMinHMupper}
  \alpha &= \max_{z \in \frac{C-C}{2}}
\|z \|_{\left(\frac{C^\circ-C^\circ}{2}\right)^\circ} =  \max_{z \in \frac{C-C}{2}} \frac12 \left( \|z \|_C+ \|z \|_{-C} \right) \\ & \leq  \max_{z \in \frac{C-C}{2}} \max\left\{\|z \|_C,\|z \|_{-C}\right\} = \max_{z \in \frac{C-C}{2}} \|z\|_{C \cap (-C)} = \frac{s+1}{2}. \nonumber
\end{align}
Now let $C=S\cap(-sS)$ for $s\in[1,n]$ as given in Remark \ref{ex:partial-symm-of-simplex} and let $q^{n+1}$, $q^n$
be the centers of the $(n-2)$-dimensional facets
of $F_{n+1}$ and $F_{n}$, respectively, which do not contain a vertex belonging to the line segment $[p^n,p^{n+1}]$.
Now let $v$ be a vertex of $F_{n+1}$ with the outer normal $p^{n+1}$. Then $v$ belongs to an edge connecting $p^{n+1}$ with 
$\conv(\{p^1,\dots,p^{n}\})$. Thus, $v=(1-\lambda)p^i+\lambda p^{n+1}$ for some $\lambda\in[0,1]$ and $i \neq n+1$. 
Since $v \in F_{n+1}$, we know by Remark \ref{ex:partial-symm-of-simplex} that
\[
\frac{s}{n}=((1-\lambda)p^i+\lambda p^{n+1})^Tp^{n+1}=(1-\lambda)\frac{-1}{n}+\lambda,
\]
i.e.~$\lambda=\frac{s+1}{n+1}$. Thus, the vertices of $F_{n+1}$ are $\frac{n-s}{n+1}p^i+\frac{s+1}{n+1}p^{n+1}$, $i \in [n]$ and
therefore 
\[
\begin{split}
q^{n+1} & =\frac{1}{n-1}\sum_{i=1}^{n-1}\left(\frac{n-s}{n+1}p^i+\frac{s+1}{n+1}p^{n+1}\right) \\
& = \frac{1}{n-1}\left(\frac{(n-1)(s+1)}{n+1}p^{n+1}+\frac{n-s}{n+1}(-p^n-p^{n+1})\right) \\
& = \frac{1}{n^2-1}((ns-1)p^{n+1}-(n-s)p^n),
\end{split}
\]
where we have used that $\sum_{i=1}^{n-1}p^i=-p^n-p^{n+1}$.
For the same reasons 
\[
-q^n=\frac{1}{n^2-1}((n-s)p^{n+1}-(ns-1)p^n).
\]
Now, let $z := \frac{q^{n+1}-q^n}{2} \in\frac{C-C}{2}$. Then
\[
\begin{split}
z &= \frac{1}{2(n^2-1)}\left((ns+n-s-1)p^{n+1}-(ns+n-s-1)p^n\right) =\frac{s+1}{2(n+1)}(p^{n+1}-p^n).
\end{split}
\]
Moreover, one should notice that when $s=1$ 
\[
q^{n+1} = -q^{n} = \frac{1}{n+1}(p^{n+1}-p^n) \in \bd\left(\frac{(S \cap (-S)) - (S \cap (-S))}{2}\right)=\bd(S \cap (-S)).
\]

Since we also have $S \cap (-S) = C \cap (-C)$, independently of our choice of $s\in[1,n]$, it follows
\[
\|z\|_{C}=\|z\|_{-C}=\|z\|_{C\cap(-C)}=\|z\|_{S \cap (-S)}=\frac{s+1}{2}\left\|\frac{1}{n+1}(p^{n+1}-p^n)\right\|_{S \cap (-S)}=\frac{s+1}{2},
\]
which shows equality in \eqref{eq:AMinHMupper} and thus that $\frac{C-C}{2} \subset^{opt} \frac{s+1}{2} \left(\frac{C^\circ-C^\circ}{2}\right)^\circ$, concluding the proof.
\end{proof}

Note that only in Part (vi) of Theorem \ref{thm:reverse_inclusions} the containment may not always be optimal. Below we give two examples: the first one shows that at least in 2-space all regular $k$-gons achieve optimality, while the second one provides a construction of sets in arbitrary dimensions where the containment is not optimal.

\begin{example}
Let $C\subset\mathbb R^2$ be a Minkowski centered regular $k$-gon with odd $k$. 
Then 
\[
\frac{C-C}{2}\subset^{opt} \frac{s(C)+1 }{2}\left(\frac{C^\circ-C^\circ}{2}\right)^\circ.
\] 
\end{example}
\begin{proof}
Let $r(C)$, $R(C)$ be the in- and circumradius of $C$ in the Euclidean distance, respectively. Assume w.l.o.g.\ that $R(C)=1$. 
Since the Minkowski center of $C$ coincides with the in- and circumcenter, we can easily conclude that $s(C)=\frac{R(C)}{r(C)}$.
Now, since for any $k$-gon $r(C)=R(C) \cos \left(\frac{ \pi}{k} \right)$, it follows 
\[
s:=s(C)=\frac{R(C)}{R(C) \cos \left(\frac{ \pi}{k} \right)}= \frac{1}{\cos \left(\frac{ \pi}{k} \right)}. 
\] 
We choose a vertex $p= \frac{u-v}{2}$ of $\frac{C-C}{2}$ , where $u$ and $v$ are vertices of $C$. Assume w.l.o.g.\ that $\|u\|=\|v\|=1$. Then since $C$ is Minkowski centered, $C^\circ =\rho (-C)$ for some $\rho >0$. Note that $R(C)=1$, thus $r(C^\circ)=1$ and $\rho=s$. 
Since $C^\circ$ is again a regular $k$-gon,  the distance from the origin to any edge
of $\left(\frac{C^\circ-C^\circ}{2}  \right)^{\circ}$ is the same.

Using $C^\circ =s (-C)$, we have that $w=s p$ is a vertex of $\frac{C^\circ-C^\circ}{2}$. Moreover, $\{ x \in \R^n :  w^T x =\|w\|^2 \}$ determines an edge of $\left(\frac{C^\circ-C^\circ}{2}\right)^\circ$ with outer-normal vector $\frac{w}{\|w \|^2}$. This implies $\frac{C^\circ-C^\circ}{2}  \subset^{opt} \| w \|^2 \left(\frac{C^\circ-C^\circ}{2}\right)^\circ.$ 
Since $\cos \left(\frac{ \pi}{k} \right) =\frac{1}{s}$ and $R(C)=1$, we get
\begin{align*}
   \| w \|^2= 
   \frac{ s^2}{4} \| u-v \|^2 &= \frac{ s^2}{4} \left( \| u\|^2+\|-v \|^2 +2 u^T(-v) \right) = \frac{ s^2}{4} \left( 2 R(C)^2+ 2R(C)^2 \cos \left(\frac{ \pi}{k} \right) \right) \\
    &=\frac{ s^2}{2}  \left( 1+ \cos \left(\frac{ \pi}{k} \right) \right) 
    = \frac{ s^2}{2}  \left( 1+ \frac{ 1}{s} \right).
\end{align*}
Therefore,
\[
\frac{C-C}{2} = \frac{1}{s} \frac{C^\circ-C^\circ}{2}  \subset^{opt}  \frac{1}{s} \frac{ s^2}{2}  \left( 1+ \frac{ 1}{s} \right) \left(\frac{C^\circ-C^\circ}{2}\right)^\circ=\frac{s+1}{2} \left(\frac{C-C}{2}\right)^\circ . 
\]
\end{proof}

\begin{example}\label{ex:notopt} 
Note that here we provide a construction only for $n=2$. For the higher dimensional case one can simply embed the construction below into the according space, keeping the Minkowski center 0. This keeps its asymmetry value and the same factor for the containment of the arithmetic mean within the harmonic mean. Thus, the construction essentially provides a family of sets in arbitrary dimensions with asymmetry $s  \in (1,2)$ (only), such that the arithmetic mean is contained in the interior of the harmonic mean scaled by $\frac{s+1}2$.

Let $K=S\cap(-sS)$, where $S$ is a Minkowski centered regular triangle and $s\in(1,2)$. By  $p^1,\dots,p^6$ we denote the vertices of $K$, counted in clockwise order, such that $[p^i,p^{i+1}]$ with $i=1,3,5$ are the shorter edges of $K$. 
Let
\[
C=\conv(\{p^2, p^4, p^6, \frac{p^1+p^2}{2},  \frac{p^3+p^4}{2}, \frac{p^5+p^6}{2}, \frac{p^1-p^4}{s+1}, \frac{p^3-p^6}{s+1}, \frac{p^5-p^2}{s+1}\})
\]
(c.f.~Figure \ref{fig:ScapsS_cut}).
\begin{figure}[H]
\centering
    \begin{tikzpicture}[scale=2.7]
    \draw [thick, black] (0,0.8) -- (0.11547,0.8)-- (0.28885,0.50035)-- (0.5774,0)-- (0.69,-0.4)--(0.6348,-0.5)--(0.28885,-0.50035)--(-0.28885,-0.50035)-- (-0.69,-0.4)--(-0.75038,-0.2997)--(-0.5774,0)--(-0.28885,0.50035)--(0,0.8);
    \draw [thick, dashed] (0,-0.8)--(-0.11547,-0.8) --(-0.28885,-0.50035)--(-0.5774,0)--(-0.69,0.4)--(-0.6348,0.5)--(-0.28885,0.50035)--(0.28885,0.50035)--(0.69,0.4)-- (0.75038,0.2997)--(0.5774,0)--(0.28885,-0.50035)--(0,-0.8);
     \draw [thick, black, dotted] (-0.11547,0.8) -- (0.11547,0.8) -- (0.75038,-0.2997)-- (0.6348,-0.5)-- (-0.6348,-0.5)-- (-0.75038,-0.2997)--(-0.11547,0.8);
    \draw [thick, black, dotted] (0.11547,-0.8) -- (-0.11547,-0.8) -- (-0.75038,0.2997)-- (-0.6348,0.5)-- (0.6348,0.5)-- (0.75038,0.2997)--(0.11547,-0.8);
    \draw [thick, dred, dotted] (0.75038,-0.2997) -- (0.6348,-0.5)-- (0.11547,-0.8) -- (-0.11547,-0.8)--(-0.6348,-0.5)-- (-0.75038,-0.2997)--(-0.75038,0.2997)-- (-0.6348,0.5)--(-0.11547,0.8) -- (0.11547,0.8)--(0.6348,0.5)-- (0.75038,0.2997)--(0.75038,-0.2997);
    \draw [thick, dred] (0.69,-0.4)--(0.6348,-0.5)-- (0,-0.8) -- (-0.11547,-0.8) --(-0.69,-0.4)--(-0.75038,-0.2997)--(-0.69,0.4)--(-0.6348,0.5)--(0,0.8)--(0.11547,0.8)--(0.69,0.4)-- (0.75038,0.2997)--(0.69,-0.4);
    \draw [thick, dblue, dashed] (0.75038,0) -- (0.375135,-0.65) -- (-0.375135,-0.65)-- (-0.75038,0) -- (-0.375135,0.65)--  (0.375135,0.65)-- (0.75038,0);
    \draw [thick, dblue] (0.46,-0.50035) --(0.3,-0.65)--(-0.2,-0.65)--(-0.4,-0.6) --(-0.46,-0.50035)--(-0.66,-0.15)--(-0.72,0.05) --(-0.46,0.50035)-- (-0.3,0.65)--(0.2,0.65)-- (0.4,0.6)--(0.665,0.15)-- (0.72,-0.05)--(0.46,-0.50035);
     \draw [thick, fill=lgold, fill opacity=0.7] (-0.28885,0.50035)--(0.28885,0.50035)--(0.5774,0)--(0.28885,-0.50035)--(-0.28885,-0.50035)--(-0.5774,0);
    \draw [thick, black] (0.5774,0) -- (0.28885,-0.50035) -- (-0.28885,-0.50035) -- (-0.5774,0) -- (-0.28885,0.50035) -- (0.28885,0.50035) -- (0.5774,0);
    \draw [thick, black] (0,0) -- (0.4,0.6);
    \draw [thick, black] (0,0) -- (0.2,0.65);
    \draw [fill] (0,0) circle [radius=0.01];
    \draw [fill] (0.4,0.6) circle [radius=0.02];
    \draw [fill] (0.31,0.45) circle [radius=0.02];
    \draw [fill] (0.2,0.65) circle [radius=0.02];
    \draw [fill] (0.16,0.5) circle [radius=0.02];
   
    \draw (-0.07,-0.07) node {$0$};
    \draw (0.5,0.7) node {$q^1$};
    \draw (0.1,0.58) node {$q^2$};
    \draw (0.2,0.9) node {$p^2$};
    \draw (0.7,-0.55) node {$p^4$};
    \draw (-0.85,-0.27) node {$p^6$};
    \draw (0.28,0.18) node {$\frac{2}{s+1} q^1$};
    \draw (-0.08,0.35) node {$\frac{2}{s+1} q^2$};
    \draw (0.87,-0.42) node {$\frac{p^3+p^4}{2}$};
    \draw (-0.87,-0.45) node {$\frac{p^5+p^6}{2}$};
    \draw (-0.07,0.93) node {$\frac{p^1+p^2}{2}$};
     \end{tikzpicture}
\caption{
Construction from Example \ref{ex:notopt}, $s=1.5$: $C$ (black), $-C$ (black dashed), $K$, $-K$ (black dotted), $\conv(C\cup(-C))$ (red), $\conv(K\cup(-K))$ (red dotted), 
$\frac{C-C}{2}$ (blue), $\frac{K-K}{2}$ (blue dashed), $C\cap(-C)=K\cap(-K)$ (yellow).
}     
\label{fig:ScapsS_cut}
\end{figure}
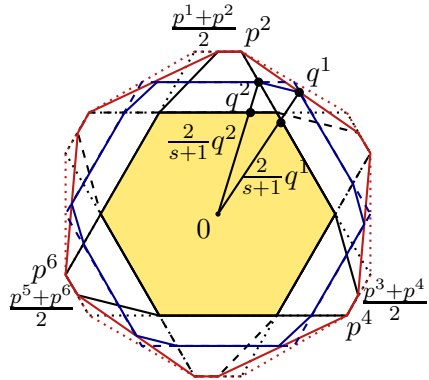

Note that 
\begin{align*}
    \frac{K-K}{2}&= \left(\conv(\left\{\pm \frac{p^1-p^4}{2}, \pm \frac{p^2-p^5}{2}, \pm \frac{p^3-p^6}{2} \right\}\right) \quad \text{and} \\
    K\cap(-K)&=\conv\left(\left\{\pm \frac{p^1-p^4}{s+1}, \pm \frac{p^2-p^5}{s+1}, \pm \frac{p^3-p^6}{s+1} \right\}\right).
\end{align*}
Moreover, $K$ and $C$ are Minkowski centered with $s(K) = s(C)=s$ and
\begin{align*}
\frac{C-C}{2}&= \conv\left(\left\{ \pm \frac{2p^2-p^5-p^6}{4}, \pm \frac{2p^4-p^1-p^2}{4}, \pm \frac{2p^6-p^3-p^4}{4}\right.\right.,\\
&\left.\left.\pm \frac{(s+2)p^2-p^5}{2(s+1)}, \pm \frac{(s+2)p^6-p^3}{2(s+1)}, \pm \frac{(s+2)p^4-p^1}{2(s+1)}\right\}\right).    
\end{align*}
Now, assume that 
\[
\frac{C-C}{2} \subset^{opt} \frac{s+1}{2}\left(\frac{C^\circ-C^\circ}{2}\right)^\circ.
\]
Then there must exist a vertex of $\frac{C-C}{2}$, which also belongs to $\bd \left( \frac{s+1}{2} \left(\frac{C^\circ-C^\circ}{2}\right)^\circ\right)$. Since the vertices of $\frac{C-C}{2}$ have two different types, we have to consider both of them:
let w.l.o.g. $q^1=\frac{2p^2-p^5-p^6}{4}$ and $q^2=\frac{(s+2)p^2-p^5}{2(s+1)}$. Then, for both $i=1,2$, 
\[
\frac{2}{s+1} q^i \in \frac{2}{s+1} \frac{K-K}{2}= K\cap(-K)=C\cap(-C)= \conv\left(\left\{ \pm \frac{p^2-p^5}{s+1}, \pm \frac{p^3-p^6}{s+1}, \pm \frac{p^4-p^1}{s+1}\right\}\right).
\]
Remember, that by Part (ii) of Lemma \ref{lem:Regard_of_Asym} we have $C \cap (-C) \subset^{opt} \left(\frac{C^\circ-C^\circ}{2}\right)^\circ$. Thus we would obtain 
\[
\pm \frac{p^2-p^5}{s+1}, \pm \frac{p^3-p^6}{s+1}, \pm \frac{p^4-p^1}{s+1} \in (C\cap(-C)) \cap \bd \left( \left(\frac{C^\circ-C^\circ}{2}\right)^\circ\right).
\]
If $\frac{2}{s+1} q^1 \in \bd \left( \left(\frac{C^\circ-C^\circ}{2}\right)^\circ\right)$ would be true, then 
\[
\frac{p^2-p^5}{s+1}, \frac{p^3-p^6}{s+1}, \frac{2}{s+1} q^1 \in (C\cap(-C)) \cap \bd \left( \left(\frac{C^\circ-C^\circ}{2}\right)^\circ\right),
\] 
which is only possible if the full edge $[\frac{p^2-p^5}{s+1}, \frac{p^3-p^6}{s+1}] \subset (C\cap(-C)) \cap \bd \left( \left(\frac{C^\circ-C^\circ}{2}\right)^\circ\right)$.
By the symmetries of $C$ we would conclude that $C\cap(-C) = \left(\frac{C^\circ-C^\circ}{2}\right)^\circ$, obtaining a contradiction to Part (iii) of Theorem \ref{thm:reverse_inclusions}. In case $\frac{2}{s+1} q^2 \in \bd \left( \left(\frac{C^\circ-C^\circ}{2}\right)^\circ\right)$, we obtain a similar conclusion (c.f.~Figure \ref{fig:ScapsS_cut}).
Thus, $C$ fulfills $\frac{C-C}{2} \subset  \mathrm{int}(\frac{s+1}{2}\left(\frac{C^\circ-C^\circ}{2}\right)^\circ)$.

\begin{remark}
Let $\omega(s) \in [1,\frac{s+1}{2}]$ be such that $\frac{C-C}{2} \subset^{opt} \omega(s)  \left(\frac{C^\circ - C^\circ}{2}\right)^\circ$, where $s=s(C)$.

Then there exists $x \in \R^n$ such that 
\[
\|x\|_{\frac{C-C}{2}}=\|x\|_{\omega(s)  \left(\frac{C^\circ - C^\circ}{2}\right)^\circ}.
\]

On the one hand, $\frac{x }{\|x\|_{C}} \in \bd(C)$ and $\frac{x }{\|x\|_{-C}}  \in \bd( -C)$, implying $\frac{1}{2} \left( \frac{1 }{\|x\|_{K}}+\frac{1}{\|x\|_{C}} \right) x \in \frac{C-C}{2}$. And thus, we get 
$\|x\|_{\frac{C-C}{2}}\leq
\left(
\frac{\frac{1}{\|x\|_C}+\frac{1}{\|x\|_{-C}}}{2} \right)^{-1}$. On the other hand, $\|x\|_{ \left(\frac{C^\circ - C^\circ}{2}\right)^\circ}=\frac{\|x\|_C+ \|x\|_{-C}}{2}$. Therefore,
\[
\omega(s)=\frac{\|x\|_{ \left(\frac{C^\circ - C^\circ}{2}\right)^\circ}}{\|x\|_{\frac{C-C}{2}}} \geq \frac{\|x\|_C+ \|x\|_{-C}}{2} \frac{\frac{1}{\|x\|_C}+\frac{1}{\|x\|_{-C}}}{2}=\frac{(\|x\|_C+ \|x\|_{-C})^2}{4 \|x\|_C \|x\|_{-C}}. 
\]

Let w.l.o.g. $\|x\|_C \geq \|x\|_{-C}$ and $\rho:=\frac{\|x\|_C}{\|x\|_{-C}}$. Since $C$ is Minkowski centered,
we have $1 \leq \rho \leq s$, and thus

\begin{equation}\label{eq:LBofOmega}
\omega(s) \geq \frac{(\rho+1)^2}{4 \rho}.
\end{equation}
Note, that the right term above
attains its maximum for $\rho=s$. Since we actually get this value, whenever
$x$ is an asymmetry point of $C$, we conclude the assertion.

\end{remark}

\end{example}

\begin{remark}
Let $P \subset \R^n$ be a Minkowski centered polytope. By Proposition \ref{prop:Opt_Containment} there exist $a^i \in \R^n$ and vertices $x^i$, such that $H_{a^i,1}^{\le}$ define facets of $P$, $i \in [k+1]$ and $s(P) \leq k \leq n$,
such that $0 \in \conv(\{a^1,\dots,a^{k+1}\})$ and $-x^i \in H_{a^i,s(P)}$, $i \in [k+1]$.

Now, we do not only have $\frac{P-P}2 \subset^{opt} \frac{(s(P)+1)}2 (P \cap (-P))$ from Theorem \ref{thm:reverse_inclusions}, but also the fact that for any vertex $y$ of the facet $P \cap H_{a^i,1}$ of $P$, we have $\frac{y-x^i}2 \in H_{a^i,\frac{s(P)+1}2} \cap \frac{P-P}2 \cap \frac{s(P)+1}{2} (P \cap (-P))$, i.e.~$\frac{P-P}{2}$ touches $\frac{s(P)+1}{2} (P \cap (-P))$ in all the facets $\frac{s(P)+1}{2} (P \cap (-P)) \cap H_{a^i,\frac{s(P)+1}2}$ with a full facet (c.f.~\cite[Lemma 2.8]{BG}, where this fact is shown for simplices).
\end{remark}

\begin{remark}
It is well known, that $s(K) = \inf_{C \in \K_0^n} d_{BM}(K,C)$  for every $K \in \K^n$ (see, e.g., \cite{Gr}). Furthermore, in \cite[Prop. 3.1]{BrG2} it is shown that this infimum is always attained by $\frac{K-K}{2}$. In general, if $C \in \K^n_0$ and $K \in \K^n$ we see from the definition of the Banach-Mazur distance that
\[
d_{BM}(K,C) = s(K) \Longleftrightarrow \exists \, L \in GL(n), t^1,t^2 \in \R^n \text{ s.t. } -K - t^1 \subset L(C) \subset s(K)K + t^2.
\]
Since $L(C)$ is symmetric, we may symmetrize and replace the right-hand side above by
\[
 \exists \, L \in GL(n), t^1,t^2 \in \R^n \text{ s.t. } \conv((K+t^1) \cup (-K-t^1)) \subset L(C) \subset s(K) ((K + t^2) \cap (-K-t^2)).
\]
For a Minkowski concentric $K$ we now immediately obtain that all four choices
\[
C \in\left\{K\cap(-K),\left(\frac{K^\circ-K^\circ}{2}\right)^\circ,\frac{K-K}{2},\conv(K\cup(-K))\right\}
\]
of symmetrizations of $K$ considered in this paper fulfill $d_{BM}(K,C) = s(K)$ and are therefore minimizers for the Banach-Mazur distance between $K$ and $\K^n_0$.

Moreover, with help of the reverse containments from Theorem \ref{thm:reverse_inclusions} we obtain some upper bounds on the Banach-Mazur distances of pairs of these symmetrizations, e.g.~ 
$d_{BM}(K \cap (-K),\conv(K \cup (-K)) \le s(K)$ or $d_{BM}(K \cap (-K), \frac{K^\circ-K^\circ}{2}) \le \frac{2s(K)}{s(K)+1}$. However, this bounds do not even have to be tight when the containments between those sets are. E.g.~is the Banach-Mazur distance of any two of the symmetrizations of a regular triangle in the plane exactly 1, as they are all regular hexagons.
\end{remark}

\section{Improving the containment factors in the forward direction}\label{sec:improving_inclusions}

Now we prove Theorem \ref{thm:small_asym_no_improve}. 
\begin{proof}[Proof of Theorem \ref{thm:small_asym_no_improve}]
We start showing Part a). 
\begin{enumerate}[(i)]
\item We first show that 
$\alpha_1(s) \geq \frac{2}{s+1}$ independently of $n$. 
By the definition of $\alpha(s)$ and Part (ii) of Theorem \ref{thm:reverse_inclusions} we have $C \cap (-C) \subset^{opt} \alpha(s) \cdot \conv(C \cup (-C)) \subset^{opt} \frac{\alpha(s) (s+1)}{2} \cdot \left(\frac{C^\circ -C^\circ}{2}\right)^{\circ}$, while
Part (ii) of Lemma \ref{lem:Regard_of_Asym} gives us $C \cap(-C) \subset^{opt} \left(\frac{C^\circ - C^\circ}{2}\right)^{\circ}$. Hence, $\alpha(s)$ must be always at least $\frac{2}{s+1}$. 

Next we show $\alpha_1(s)=\frac{2}{s+1}$ in any dimension if $s \le 2$.  
First let $n=2$ and consider $C = S \cap (-s S)$ with $s \in [1,2]$ and the regular triangle $S=\conv \left(\left\{p^1, p^2,p^3 \right\}\right)$. 
Now choose w.l.o.g. the vertex $v$ of $C \cap (-C)$ with $v \in \pos \left(\left\{ p^1, p^2 \right\}\right)$ and let $\mu \ge 1$ be such that $\mu v \in \bd(\conv(C \cup (-C)))$. Let $q$ be a vertex of $\conv(C \cup (-C))$, such that $q \in [p^2, v]$.

\begin{figure}[H]
\centering
    \begin{tikzpicture}[scale=3]
    \draw [thick, black] (-0.11547,0.8) -- (0.11547,0.8) -- (0.75038,-0.2997)-- (0.6348,-0.5)-- (-0.6348,-0.5)-- (-0.75038,-0.2997)--(-0.11547,0.8);
    \draw [thick, dashed] (0.11547,-0.8) -- (-0.11547,-0.8) -- (-0.75038,0.2997)-- (-0.6348,0.5)-- (0.6348,0.5)-- (0.75038,0.2997)--(0.11547,-0.8);
    \draw [thick, black, dotted] (-0.11547,0.8) -- (0,1)--(0.11547,0.8); 
    \draw [thick, black, dotted] (0.75038,-0.2997)-- (0.87,-0.5)--(0.6348,-0.5);
    \draw [thick, black, dotted] (-0.75038,-0.2997)-- (-0.87,-0.5)--(-0.6348,-0.5);
    \draw [thick, gray] (0.75038,0.2997)--(0.75038,-0.2997);
    \draw [thick, gray] (0,0)--(0.86,-0.5);
    \draw [thick, gray] (0,0)--(0.75,0);
    
    \draw [fill] (0,0) circle [radius=0.01];
    \draw [fill] (0.5774,0) circle [radius=0.02];
    \draw [fill] (0.86,-0.50035) circle [radius=0.02];
    \draw [fill] (0,1) circle [radius=0.02];
    \draw [fill] (0.75,0) circle [radius=0.02];
    \draw [fill] (0.75,-0.29) circle [radius=0.02];
    \draw (-0.07,-0.07) node {$0$};
    \draw (0.42,0.1) node {$v$};
    \draw (0.9,0) node {$\mu v$};
    \draw (0.85,-0.28) node {$q$};
    \draw (0,1.1) node {$p^1$};
    \draw (1,-0.55) node {$p^2$};
     \end{tikzpicture}
\caption{Construction from the proof of Part (i) of Theorem \ref{thm:small_asym_no_improve}: $C=S \cap (-s S)$ ($s=s(C)=1.5$) (black), $-C$ (dashed).
}
\end{figure}
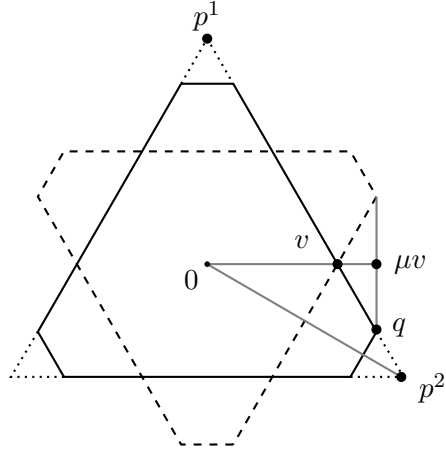
Since $\|p^2\|=1$, we have 
\[ 	
\left\|v-q \right\|=\frac{\left \|p^2-\left(\frac12 p^2 \right)\right\|-\left \|p^2-\left(\frac{s}{2}p^2 \right)\right\|}{\cos  (\pi/6) }= \frac{2}{\sqrt{3}} \left( \frac s2 -\frac12\right). 
\]
Thus, since $\|v\|=\frac{1}{\sqrt{3}}$, we have 
\[
\frac{\left\|\mu v \right\|}{\left\|v \right\|}= 1+ \frac{\left\|\mu v -v\right\|}{\left\|v \right\|}= 1+ \frac{\left\|v-q \right\| \sin  (\pi/6)}{\left\|v \right\|}=1+ \frac{ \frac{1}{2} \left\|v-q \right\| }{\frac{1}{\sqrt{3}}}= 1+\frac s2 -\frac1n,   
\]
which implies
\[
\alpha(s)= \frac{1}{1+\frac s2 -\frac12} =\frac{2}{s+1}. 
\]

For $n\geq 3$ we can simply embed the above $C$ keeping the Minkowski center to be still 0 into $n$-space. This keeps its asymmetry value and also the correct factor for the containment between $C \cap (-C)$ and $\conv(C \cup (-C))$. 

\item Obviously, from Proposition \ref{prop:means_of_sets} we know that $\alpha(s) \le 1$.

We now show that for any $s \ge \varphi$ 
there exists a Minkowski centered $C$ with $s(C)=s$ for which $\alpha(s)=\frac{s}{s^2-1}$.

For any $s \ge \varphi$ we define
\[C=\conv\left(\left\{ 
\begin{pmatrix} \pm 1 \\ (\varphi+1)(2-s-\frac{1}{s+1})\end{pmatrix}, 
\begin{pmatrix} \pm 1 \\ - \frac{\varphi+1}{s+1} \end{pmatrix}, 
\begin{pmatrix} 0 \\ s\frac{\varphi+1}{s+1} \end{pmatrix}\right\}\right).\]

Since $-(1/s) C \subset C$ with
\[
-\frac1s \begin{pmatrix} \pm 1 \\ - \frac{\varphi+1}{s+1} \end{pmatrix} \in 
\left[\begin{pmatrix}  \mp 1 \\ (\varphi+1)(2-s-\frac{1}{s+1})\end{pmatrix} , \begin{pmatrix} 0 \\ s\frac{\varphi+1}{s+1} \end{pmatrix}\right]
\]
and
\[
-\frac1s \begin{pmatrix} 0 \\ s\frac{\varphi+1}{s+1} \end{pmatrix} \in 
\left[\begin{pmatrix} 1 \\ - \frac{\varphi+1}{s+1} \end{pmatrix}, \begin{pmatrix}  -1 \\ - \frac{\varphi+1}{s+1} \end{pmatrix}\right],
\]
we obtain from Proposition \ref{prop:Opt_Containment} that $C$ is Minkowski centered and $s(C)=s$. 

Let $\alpha(s)<1$ be such that $C \cap (-C) \subset^{opt} \alpha(s) \cdot \conv(C \cup (-C))$. Then due to the symmetries of $C$, we have 
\[
\alpha(s)=\max \left\{ \frac{\|v\|}{\|w\|}, \frac{\|p\|}{\|q\|}  \right\},
\]
where $v$ and $p$ are vertices of $C \cap (-C)$, such that $v = \begin{pmatrix} \frac{s}{s^2-1} \\ 0\end{pmatrix}$ and  
\[p=\left[\begin{pmatrix} 0 \\ s\frac{\varphi+1}{s+1} \end{pmatrix},\begin{pmatrix} 1 \\ (\varphi+1)(2-s-\frac{1}{s+1})\end{pmatrix}\right] \cap \left[\begin{pmatrix} 1 \\  \frac{\varphi+1}{s+1} \end{pmatrix},\begin{pmatrix} -1 \\  \frac{\varphi+1}{s+1} \end{pmatrix}\right],
\]
while $w$ and $q$ are rescalations of $v$ and $p$, respectively, belonging to $\bd(\conv(C \cup (-C))).$
Now, we see 
\[w=\frac12 \left( \begin{pmatrix} 1 \\ - \frac{\varphi+1}{s+1} \end{pmatrix}+ \begin{pmatrix} 1 \\  \frac{\varphi+1}{s+1} \end{pmatrix} \right)=\begin{pmatrix} 1 \\ 0\end{pmatrix} \in \bd(\conv(C \cup (-C))),
\] 
which shows $w=\frac{s^2-1}{s} v$.
Note that for some $x \in \R$ and some $\lambda \in [0,1]$, we have


\[
p=\begin{pmatrix} x \\ \frac{\varphi+1}{s+1} \end{pmatrix}=\lambda  \begin{pmatrix} 0  \\ \frac{s(\varphi+1)}{s+1} \end{pmatrix} +(1-\lambda)  \begin{pmatrix} 1  \\ \frac{(\varphi+1)(-s^2+s+1)}{s+1}
\end{pmatrix}. 
\]
Thus, $\lambda=\frac{s}{s+1}$ and
\[
p = \frac{1}{s+1}\begin{pmatrix}1 \\ \varphi+1 \end{pmatrix}.
\]

Now let $q=\nu p  \in \bd(\conv(C \cup (-C)))$ for some $\nu >1$. Then  
\[
q=\lambda  \begin{pmatrix} 0  \\ \frac{s(\varphi+1)}{s+1} \end{pmatrix} +(1-\lambda) \begin{pmatrix} 1  \\ \frac{\varphi+1}{s+1}
\end{pmatrix}. 
\]
Thus, $\lambda=\frac12$ and 
\[
q = \frac12 \begin{pmatrix} 1  \\ \varphi+1 \end{pmatrix}.
\]
Let $\alpha(s)<1$ be such that $C \cap (-C) \subset^{opt} \alpha(s) \cdot \conv(C \cup (-C))$. Then due to the symmetries of $C$, we have 
\[
\alpha(s)=\max \left\{ \frac{\|v\|}{\|w\|}, \frac{\|p\|}{\|q\|}  \right\} = \max \left\{ \frac{2}{s+1}, \frac{s}{s^2-1} \right\} = \frac{s}{s^2-1}.
\]


By the definition of $\gamma(n)$, we have $\alpha_2(S) = 1$ for $s \leq \gamma(n)$, while $\alpha_2(s) \le \psi \frac{n}{n+1}$ if $s > \gamma_2(n)$ follows from Part (i) of Theorem 1.5. 
\end{enumerate}

\begin{figure}[ht]
  \begin{subfigure}[b]{0.47\textwidth}
    \centering
  \begin{tikzpicture}[scale=3.5]
    \draw[thick, discont] (0.05,0) -- (0.25,0);
    \draw[thick, discont] (0,0.05) -- (0,0.25);
    \draw [thick] (-0.2,0) -- (0.05,0);
    \draw [thick] (0,-0.2) -- (0,0.05);
    \draw[->] [thick] (0.25, 0) -- (1.7, 0) node[right] {$s$};
    \draw[->] [thick] (0, 0.25) -- (0, 0.7); 
    \draw [thick,gray, shift={(0,-0.5)}] (1.67,1.75) -- (1.8,1.75)--(1.8,0.95)--(1.67,0.95)--(1.67,1.75);
   \draw [thick,gray, shift={(0,-0.5)}] (1.45,1.05) -- (1.55,1.05)--(1.55,0.62)--(1.45,0.62)--(1.45,1.05);
    \draw [thick,gray, shift={(0,-0.5)}] (1.55,0.9) -- (1.67,1.4);
    \draw [thick, dred,shift={(-0.5,-0.5)}] (1,1) -- (1.61,1);
    \draw [thick, dred,shift={(-0.5,-0.5)}] (1.61,1) -- (1.9914,1);
    \draw[thick, dred, domain=1.9914:2, smooth, variable=\x,shift={(-0.5,-0.5)}]  plot ({\x},
    {(-26*\x^2+36*\x+34)/(18*\x^2-24*\x-21)});
    \draw[thick, dred, domain=1.9914:2, smooth, variable=\x,shift={(-0.25,-0.6)}]  plot ({\x},
    {1.7*(-26*\x^2+36*\x+34)/(18*\x^2-24*\x-21)});
    \draw[thick, dblue, domain=1:2, smooth, variable=\x, dblue,shift={(-0.5,-0.5)}]  plot ({\x}, {2/(\x+1)});
    \draw[thick, dgreen, domain=1.61:2, smooth, variable=\x,shift={(-0.5,-0.5)}]  plot ({\x}, {(\x)/((\x)^2-1)});
    \fill [fill=lgold, fill opacity=0.7, domain=1:1.3, variable=\x,shift={(-0.5,-0.5)}] (1,1) -- (1.3,1)-- (1.3,0.875)--(1,1);
    \fill [fill=lgold, fill opacity=0.7, domain=1.3:1.61, variable=\x,shift={(-0.5,-0.5)}] (1.3,1) -- (1.61,1)-- (1.61,0.775)--(1.3,0.875);
    \fill [fill=lgold, fill opacity=0.7, domain=1.3:1.61, variable=\x,shift={(-0.5,-0.5)}] (1.61,1) -- (1.8,0.79)--(1.8,0.72)-- (1.61,0.775)--(1.61,1);
    \fill [fill=lgold, fill opacity=0.7, domain=1.8:2, variable=\x,shift={(-0.5,-0.5)}] (1.8,0.79) -- (2,0.67)--(1.8,0.72)-- (1.8,0.79)--(1.8,0.79);
    
    \draw [thick, dashed,gray] (0,0.5) -- (0.5,0.5);
    \draw [thick, dashed,,gray] (1.11,0)--(1.11,0.28);
    \draw [thick, dashed,gray] (1.5,0) -- (1.5,0.17);
    \draw [thick, dashed,gray] (0.5,0) -- (0.5,0.5);
    \draw [thick, dashed,gray] (1.75,0.55) -- (1.76,1.11);
    
    \draw [fill,shift={(-0.5,-0.5)}] (1,1) circle [radius=0.01];
    \draw [fill,shift={(-0.5,-0.5)}] (1.61,1) circle [radius=0.01];
    \draw [fill,shift={(-0.5,-0.5)}] (2,0.67) circle [radius=0.01];
    \draw [fill,shift={(-0.5,-0.5)}] (1.9914,1) circle [radius=0.01];
    \draw [fill] (1.74,1.11) circle [radius=0.01];
    \draw [fill] (1.75,0.55) circle [radius=0.01];
    
    \draw (-0.15,0.7) node {$\alpha(s)$};
    \draw (-0.1,0.5) node {$1$};
    \draw (0.5,-0.1) node {$1$};
    \draw (1.11,-0.1) node {$\gamma(2)$};%
    \draw (1.5,-0.1) node {$2$};
     \end{tikzpicture}
    \caption*{$n=2$: $\alpha_2(s) = 1$ for $s \leq \gamma(2)$ (red). For $s \ge \gamma(2)$ we have $\alpha_2 (s) \ge \frac{s}{s^2-1}$ (green), while $\alpha_2(s) < 1$ for $s > \gamma(2)$ and $\alpha_2 \le \frac{2}{3} \psi(2)  = \frac{-26s^2+36s+34}{18s^2-24s-21}$ for $s > \gamma_2(2)$ (red). The lower bound is $\alpha_1(s) = \frac{2}{s+1}$ (blue).}
    \label{fig:GH}
    \begin{minipage}{.1cm}
   \vspace{1em}
    \end{minipage}
  \end{subfigure} \hfill
  \begin{subfigure}[b]{0.47\textwidth}
    \centering
        \begin{tikzpicture}[scale=3.5]
    \draw[thick, discont] (0.05,0) -- (0.25,0);
    \draw[thick, discont] (0,0.05) -- (0,0.25);
    \draw [thick] (-0.2,0) -- (0.05,0);
    \draw [thick] (0,-0.2) -- (0,0.05);
    \draw[->] [thick] (0.25, 0) -- (1.7, 0) node[right] {$s$};
    \draw[->] [thick] (0, 0.25) -- (0, 0.7);  
    \draw [thick, dred] (0.5,0.5) -- (1.21,0.5);
    \draw [thick, dred] (1.21,0.5) -- (1.4837,0.5);
    \draw [thick, dred] (1.4837,0.5) -- (1.5,0.39);
  
    \draw [thick,gray, shift={(0,-0.5)}] (1.67,1.75) -- (1.8,1.75)--(1.8,0.95)--(1.67,0.95)--(1.67,1.75);
   \draw [thick,gray, shift={(0,-0.5)}] (1.45,1.05) -- (1.55,1.05)--(1.55,0.83)--(1.45,0.83)--(1.45,1.05);
    \draw [thick,gray, shift={(0,-0.5)}] (1.55,0.9) -- (1.67,1.4);
    \draw[thick, dred] plot [smooth] coordinates { (1.71,1.1) (1.73,0.77) (1.75,0.59) };
    \draw [thick, dashed,gray] (1.75,1.1) -- (1.75,0.59);
    \draw [fill] (1.71,1.1) circle [radius=0.01];
    \draw [fill] (1.75,0.59) circle [radius=0.01];
    
    \draw[thick, dblue, domain=1:2, smooth, variable=\x,shift={(-0.5,-0.5)}]  plot ({\x}, {(4*\x)/(\x+1)^2)});
    \draw [thick, dashed,gray] (0,0.5) -- (0.5,0.5);
    \draw [thick, dashed,gray] (1.11,0) -- (1.11,0.5);
    \draw [thick, dashed,gray] (1.5,0) -- (1.5,0.39);
    \draw [thick, dashed,gray] (0.5,0) -- (0.5,0.5);
    \draw[thick, dgreen, domain=1.61:1.67, smooth, variable=\x,shift={(-0.5,-0.5)}]  plot ({\x}, {(\x)/((\x)^2-1)});
    \fill [fill=lgold, fill opacity=0.7, domain=1.61:1.67, variable=\x,shift={(-0.5,-0.5)}] (1.61,1)--(1.67,0.93) --(1.61, 0.95); 
    \fill [fill=lgold, fill opacity=0.7, domain=1:1.3, variable=\x,shift={(-0.5,-0.5)}] (1,1) -- (1.3,1)--(1.3,0.99)--(1,1);
    \fill [fill=lgold, fill opacity=0.7, domain=1.3:1.61, variable=\x,shift={(-0.5,-0.5)}] (1.3,1) -- (1.61,1)--(1.61,0.95)--(1.3,0.99);
    \fill [fill=lgold, fill opacity=0.7, domain=1.61:2, variable=\x,shift={(-0.5,-0.5)}] (1.67,0.94) --(2, 0.905)--(2, 0.9)--(1.67,0.95);
    
    
    \draw [fill] (0.5,0.5) circle [radius=0.01];
    \draw [fill] (1.11,0.5) circle [radius=0.01];
    \draw [fill] (1.5,0.39) circle [radius=0.01];
    \draw [fill] (1.4837,0.5) circle [radius=0.01];
   
    \draw [fill] (1.17,0.438) circle [radius=0.01];
    
    \draw (-0.15,0.7) node {$\beta(s)$};
    \draw (-0.1,0.5) node {$1$};
    \draw (0.5,-0.1) node {$1$};
    \draw (1.11,-0.1) node {$\gamma(2)$};
    \draw (1.5,-0.1) node {$2$};
     \end{tikzpicture}
    \caption*{$n=2$: $\beta_2(s) = 1$ for $s \leq \gamma(2)$ (red); for $s \in [\varphi,2]$ the upper bound is at least $\max \{ \frac{s}{s^2-1}, \frac{4s}{(s+1)^2} \}$ (green) and for $s \in [\gamma_2(n),2]$ also at most 
    $ \frac{8}{9} \mu \psi$
    (red). The lower bound is 
    $\frac{4s}{(s+1)^2}$, which is sharp for $s \in [1,2]$ (blue).} \label{fig:GH-symm}
  \begin{minipage}{.1cm}
   \vspace{1em}
    \end{minipage}
  \end{subfigure}

\caption{Region of possible values of parameters $\alpha(s)$, $\beta(s)$ with $s \in [1,n]$, s.th. $C \cap (-C) \subset^{opt} \alpha(s) \conv(C \cup (-C))$, $\left(\frac12 (C^\circ - C^\circ)) \right)^{\circ}\subset^{opt} \beta(s) \, \frac12 (C-C)$ for some Minkowski centered $C \in\K^n$ with $s(C)=s$ from Theorem \ref{thm:small_asym_no_improve}.}
\end{figure}

We now proceed with Part b). 
\begin{enumerate}[(i)]
\item
Let $\beta(s) \leq 1$ be such that
\[
\left(\frac{C^\circ - C^\circ}{2} \right)^{\circ} \subset^{opt} \beta(s) \frac{C-C}{2}.
\]
On the one hand, by Part (iv) of Theorem \ref{thm:reverse_inclusions} we have
\[
\left(\frac{C^\circ - C^\circ}{2} \right)^{\circ} \subset^{opt} \beta(s) \frac{C-C}{2} \subset^{opt} \beta(s) \frac{s+1}{2} C \cap (-C).
\]
On the other hand, from Part (iii) of Theorem \ref{thm:reverse_inclusions} we know that 
\[
\left(\frac{C^\circ - C^\circ}{2} \right)^{\circ} \subset^{opt} \frac{2s}{s+1} C \cap (-C).
\]
Hence $ \frac{2s}{s+1}  \leq \beta(s) \frac{s+1}{2}$, 
which implies $\beta_1(s) \geq \frac{4s}{(s+1)^2}$.


Now, consider the hexagon 
\[C=\conv\left(\left\{ 
\begin{pmatrix} \pm  \frac{\sqrt{3}}{3} \left( 1- \frac{s}{2} \right) \\ \frac{s}{2}  \end{pmatrix}, 
\begin{pmatrix} \pm  \frac{\sqrt{3}}{3} \left( \frac{s+1}{2} \right) \\  \frac{1}{2}-\frac{3s}{4} \end{pmatrix}, \begin{pmatrix} \pm \frac{\sqrt{3}}{3} \left( s- \frac{1}{2} \right) \\  -\frac{1}{2} \end{pmatrix} \right\}\right)
\]
with $s\in[1,2]$. For all $s\in[1,2]$ we have
\begin{align*}
- \frac1s \begin{pmatrix}  \frac{\sqrt{3}}{3} \left( 1- \frac{s}{2} \right) \\ \frac{s}{2}  \end{pmatrix} &=  \frac{2(s^2-1)}{s(4s-1)}   \begin{pmatrix}  -\frac{\sqrt{3}}{3} \left( s- \frac{1}{2} \right) \\ -\frac{1}{2}  \end{pmatrix} + \left( 1-\frac{2(s^2-1)}{s(4s-1)}\right) \begin{pmatrix}  \frac{\sqrt{3}}{3} \left( s- \frac{1}{2} \right) \\ -\frac{1}{2}  \end{pmatrix}, 
\end{align*}
as well as
\begin{align*}
- \frac1s \begin{pmatrix}  \pm \frac{\sqrt{3}}{3} \left( s- \frac{1}{2} \right) \\ -\frac{1}{2}  \end{pmatrix} &= \frac{3s^2-2s+2}{s(5s-2)} \begin{pmatrix}  \frac{\sqrt{3}}{3} \left( 1- \frac{s}{2} \right) \\ \frac{s}{2}  \end{pmatrix} + \left(1-\frac{3s^2-2s+2}{s(5s-2)}\right) \begin{pmatrix}  \frac{\sqrt{3}}{3} \left( \frac{s+1}{2} \right) \\ \frac{1}{2}-\frac{3s}{4}  \end{pmatrix},
\end{align*}
with $\frac{2(s^2-1)}{s(4s-1)}, \frac{3s^2-2s+2}{s(5s-2)} \in [0,1]$.
Hence, $C$ is Minkowski centered with $s(C)=s$. 
Since $C$ is a hexagon with 3 pairs of parallel edges, it 
turns out that its arithmetic mean stays to be a hexagon:
\[
\frac{C-C}{2}=\conv\left(\left\{ 
\begin{pmatrix} \pm  \frac{\sqrt{3}}{12} \left( s+1 \right) \\ \frac{s+1}{4}  \end{pmatrix}, \begin{pmatrix} \pm  \frac{\sqrt{3}}{12} \left( s+1 \right) \\ -\frac{s+1}{4}  \end{pmatrix}
\begin{pmatrix} \pm  \frac{\sqrt{3}}{3} \left( \frac{s+1}{2} \right) \\  0 \end{pmatrix} \right\}\right).
\]
The next step to do is to calculate $C^\circ=: \conv(\{q^1, \dots, q^6\})$. Since the vertices of $C$ are the outer normals of the edges of $C^\circ$ and $0 \in \inter (C)$, we obtain the vertices of $C^\circ$ as the solution of pairs of inequalities of the form $(v^i)^T x = 1$, where the $v^i$'s are consecutive vertices of $C$. 
Moreover, we make use of the fact that $C$, and therefore also $C^\circ$, is symmetric w.r.t.~the $y$-axis. Hence, it suffices to calculate four of the vertices of $C^\circ$.
Let $q^1$ fulfill the equations
\begin{align*}
 (q^1)^T \begin{pmatrix} \pm \frac{\sqrt{3}}{3} \left(1-\frac{s}{2}\right) \\ \frac{s}{2} \end{pmatrix} = 1,
\end{align*}
 which obviously needs
$q^1=\begin{pmatrix} 0 \\  \frac{2}{s} \end{pmatrix}$.
For $q^2$ we demand
\begin{align*}
 (q^2)^T \begin{pmatrix} \frac{\sqrt{3}}{3} \left(1-\frac{s}{2}\right)  \\ \frac{s}{2} \end{pmatrix}=(q^2)^T \begin{pmatrix} \frac{\sqrt{3}}{3} \left(\frac{s+1}{2}\right)\\ \frac{1}{2}-\frac{3s}{4} \end{pmatrix} = 1.
\end{align*}
and obtain $q^2=\begin{pmatrix} \sqrt{3} \\ 1 \end{pmatrix}$.
The third vertex $q^3$ should fulfill
\begin{align*}
 (q^3)^T \begin{pmatrix} \frac{\sqrt{3}}{3} \left(\frac{s+1}{2}\right)\\ \frac{1}{2}-\frac{3s}{4} \end{pmatrix} =(q^3)^T \begin{pmatrix} \frac{\sqrt{3}}{3} \left(s-\frac{1}{2}\right)  \\ -\frac{1}{2} \end{pmatrix}= 1.
\end{align*}
This leads to $q^3=\begin{pmatrix}  \frac{\sqrt{3}}{s} \\ -\frac{1}{s} \end{pmatrix}$.
Finally, for $q^4$ we have to solve 
\begin{align*}
 (q^4)^T \begin{pmatrix} \pm \frac{\sqrt{3}}{3} \left(s-\frac{1}{2}\right)  \\ -\frac{1}{2} \end{pmatrix} = 1,
\end{align*}
which gives $q^4=\begin{pmatrix} 0 \\  -2 \end{pmatrix}$.

Altogether, we obtain
\[
C^\circ=\conv\left(\left\{ 
\begin{pmatrix} 0 \\  \frac{2}{s} \end{pmatrix}, 
\begin{pmatrix} \pm \sqrt{3} \\ 1 \end{pmatrix}, 
\begin{pmatrix} \pm  \frac{\sqrt{3}}{s} \\ -\frac{1}{s} \end{pmatrix}, \begin{pmatrix} 0 \\  -2 \end{pmatrix} \right\}\right).
\]
Since $C^\circ$ has no parallel edges, $\frac{C^\circ-C^\circ}{2}$ is a 12-gon that computes to
\[
\frac{C^\circ-C^\circ}{2} = \conv\left(\left\{ 
\begin{pmatrix} 0 \\ \pm \frac{s+1}{s} \end{pmatrix}, 
\begin{pmatrix} \pm \frac{\sqrt{3}}{2} \\ \pm \frac32 \end{pmatrix}, 
\begin{pmatrix} \pm  \sqrt{3} \\ 0 \end{pmatrix}, \begin{pmatrix} \pm \sqrt{3} \frac{s+1}{2s}  \\  \pm \frac{s+1}{2s} \end{pmatrix} \right\}\right).
\]
Now let $\beta(s)>1$ be such that 
\[
\left(\frac{C^\circ - C^\circ}{2} \right)^{\circ} \subset^{opt} \beta(s) \frac{C-C}{2}.
\]
Note that 6 vertices of $\frac{C^\circ-C^\circ}{2}$ are rescales of the outer-normals of $\frac{C-C}{2}$, thus the 12-gon $\left(\frac{C^\circ - C^\circ}{2} \right)^{\circ}$ has 6 edges, which are parallel to the corresponding edges of $\frac{C-C}{2}$, thus $v \in  \bd (\beta(s) \frac{C - C}{2})$ for all vertices $v$ of $\left(\frac{C^\circ - C^\circ}{2} \right)^{\circ}$.

So we choose the vertex $v$ that fulfills the equations
\begin{align*}
 v^T \begin{pmatrix} 0 \\ \frac{s+1}{s} \end{pmatrix} = 
 v^T \begin{pmatrix}  \frac{\sqrt{3}}{2} \\ \frac32 \end{pmatrix}= 1,
\end{align*}
which gives $v=\begin{pmatrix} \frac{2-s}{\sqrt{3}(s+1)} \\ \frac{s+1}{s} \end{pmatrix}$. 

Since $v$ is contained in the edge of $\left(\frac{C^\circ - C^\circ}{2} \right)^{\circ}$ with the outer-normal $\begin{pmatrix}  0 \\ 1 \end{pmatrix}$, thus $\frac1{\beta(s)}v$ is contained in the edge of $\frac{C - C}{2}$ with the same outer-normal and 

\[
\frac{1}{\beta(s)} \begin{pmatrix} \frac{2-s}{\sqrt{3}(s+1)} \\ \frac{s+1}{s} \end{pmatrix} = \lambda \begin{pmatrix} -\frac{s+1}{4 \sqrt{3}} \\ \frac{s+1}{4} \end{pmatrix} + (1-\lambda) \begin{pmatrix} \frac{s+1}{4 \sqrt{3}} \\ \frac{s+1}{4} \end{pmatrix}, 
\]
from which we obtain $\lambda=\frac{s-1}{s}$ and $\beta(s)=\frac{4s}{(s+1)^2}$, thus proving that $\beta_1(s) = \frac{4s}{(s+1)^2}$ for all $s \in [1,2]$.

For higher dimensions we can simply embed the above construction in a way keeping the Minkowski center 0.

\item  By the definition of $\gamma(n)$, we have $\beta_2(s)=1$ for $s \leq \gamma(n)$ while $\beta_2(s) \le \mu \psi \frac{n(n+2)}{(n+1)^2}$ if $s > \gamma_3(n)$ follows from Part (ii) of Theorem 1.5.

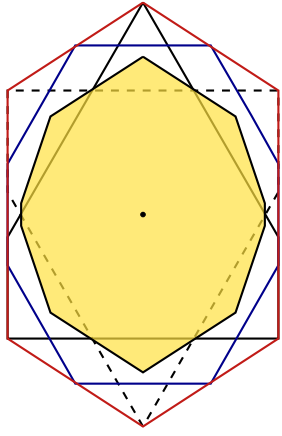
\begin{figure}[H]
\centering
    \begin{tikzpicture}[scale=3]
    \draw [thick] (0,0.94) -- (0.6,-0.1)-- (0.6,-0.55)--(-0.6,-0.55)--(-0.6,-0.1)--(0,0.94);
    \draw [thick, rotate around={180:(0,0)}, dashed] (0,0.94) -- (0.6,-0.1)-- (0.6,-0.55)--(-0.6,-0.55)--(-0.6,-0.1)--(0,0.94);
     \draw [thick, dblue] (0.6,0.225) -- (0.6,-0.225)--(0.3,-0.75)--(-0.3,-0.75)--(-0.6,-0.225) -- (-0.6,0.225)--(-0.3,0.75)--(0.3,0.75)--(0.48,0.44)--(0.6,0.225);
   \draw [thick, fill=lgold, fill opacity=0.7] (0,0.7) --(0.41,0.435)--(0.54,0.05)--(0.54,-0.05)--(0.41,-0.435)--(0,-0.7)--(-0.41,-0.435)--(-0.54,-0.05)--(-0.54,0.05)--(-0.41,0.435)--(0,0.7);
   \draw [thick, dred] (0,0.94) --(0.6,0.55)--(0.6,-0.55)--(0,-0.94)--(-0.6,-0.55)--(-0.6,0.55)--(0,0.94); 
   \draw [fill] (0,0) circle [radius=0.01];  
   \end{tikzpicture}
\caption{Construction from the proof of Part b) (ii) of Theorem \ref{thm:small_asym_no_improve} for 
$s=1.7$: $C$ (black) and $-C$ (black dashed),
$\left(\frac{C^\circ - C^\circ}{2} \right)^{\circ}$ (yellow), 
$\frac{C - C}{2}$ (blue) and $\conv(C\cup(-C))$ (red). 
}
\label{fig:ScapsS}
\end{figure}

In order to show an upper bound on $\beta_2$, 
which is strictly smaller than 1 for $s \in [\varphi,2]$, we first provide the following construction 
in $\R^2$.

Let $C=\conv\left(\left\{ 
\begin{pmatrix} \pm \frac{\sqrt{3}}{2} (s-1) \\ \frac32 \left( -\frac{1}{s+1}+2-s \right) \end{pmatrix}, 
\begin{pmatrix} \pm \frac{\sqrt{3}}{2} (s-1) \\  -\frac{3}{2(s+1)} \end{pmatrix}, 
\begin{pmatrix} 0 \\ \frac{3s}{2(s+1)} \end{pmatrix}\right\}\right)$, $s \in [\varphi,2]$.

Since 
\begin{align*}
\begin{pmatrix} 0 \\ \frac{3}{2(s+1)} \end{pmatrix} &\in 
\left[\begin{pmatrix} -\frac{\sqrt{3}}{2} (s-1) \\  -\frac{3}{2(s+1)} \end{pmatrix}, \begin{pmatrix} \frac{\sqrt{3}}{2} (s-1) \\  -\frac{3}{2(s+1)} \end{pmatrix}\right] \quad \text{and} \\
\begin{pmatrix} \pm \frac{\sqrt{3}}{2} (s-1) \\  -\frac{3}{2(s+1)} \end{pmatrix} &\in 
\left[\begin{pmatrix} \mp \frac{\sqrt{3}}{2} (s-1) \\ \frac32 \left( -\frac{1}{s+1}+2-s \right) \end{pmatrix}, \begin{pmatrix} 0 \\ \frac{3s}{2(s+1)} \end{pmatrix}\right],
\end{align*}
we obtain that $C$ is Minkowski centered with $s(C)=s$.

Note that $C$ has 5 edges, 2 of which are parallel. This implies that $\frac{C-C}{2}$ has 4 different pairs of parallel edges and 8 vertices. Calculating them gives
\[
\frac{C-C}{2}= \conv \left(\left\{ 
\begin{pmatrix} \pm \frac{\sqrt{3}}{2} (s-1) \\  \frac{3}{4}  (2-s) \end{pmatrix}, \begin{pmatrix} \pm \frac{\sqrt{3}}{2} (s-1) \\ - \frac{3}{4}  (2-s) \end{pmatrix}, 
\begin{pmatrix} \pm \frac{\sqrt{3}}{4} (s-1) \\  \frac{3}{4} \end{pmatrix}, \begin{pmatrix} \pm \frac{\sqrt{3}}{4} (s-1) \\  - \frac{3}{4} \end{pmatrix} \right\}\right). 
\]


Next, we determine $C^\circ =: \conv(\{q^1, q^2, q^3, q^4, q^5\})$.  Since the vertices of $C$ are the outer normals of the edges of $C^\circ$ and $0 \in \inter (C)$, we obtain the vertices of $C^\circ$ as the solution of pairs of inequalities of the form $(v^i)^T x = 1$, such that the corresponding $v^i$ are consecutive vertices of $C$. Since $C$ is symmetric w.r.t.~the $y$-axis, so is $C^\circ$. Hence it suffices to calculate $q^1, q^2, q^3$.

Let $q^1$ be such that
\begin{align*}
 (q^1)^T \begin{pmatrix} 0 \\ \frac{3s}{2(s+1)} \end{pmatrix} = 
 (q^1)^T \begin{pmatrix}  \frac{\sqrt{3}}{2} (s-1) \\ \frac32 \left( -\frac{1}{s+1}+2-s \right) \end{pmatrix}= 1.
\end{align*}
This gives $q^1=\begin{pmatrix} \frac{2}{\sqrt{3}} \left( \frac{s+1}{s} \right) \\ \frac{2}{3} \left( \frac{s+1}{s} \right) \end{pmatrix}$. Now, assume $q^2$ be such that 
\begin{align*}
 (q^2)^T \begin{pmatrix}  \frac{\sqrt{3}}{2} (s-1) \\  -\frac{3}{2(s+1)} \end{pmatrix} = 
 (q^2)^T \begin{pmatrix} \frac{-\sqrt{3}}{2} (s-1) \\  -\frac{3}{2(s+1)} \end{pmatrix} = 1.
\end{align*}
We obtain $q^2=\begin{pmatrix} 0 \\ -\frac{2}{3} \left( s+1 \right) \end{pmatrix}$. For $q^3$ we assume 
\begin{align*}
 (q^3)^T \begin{pmatrix}  \frac{\sqrt{3}}{2} (s-1) \\  \frac32 \left( -\frac{1}{s+1}+2-s \right) \end{pmatrix} = 
 (q^3)^T \begin{pmatrix} \frac{\sqrt{3}}{2} (s-1) \\  -\frac{3}{2(s+1)} \end{pmatrix}= 1.
\end{align*}
Then $q^3=\begin{pmatrix}  \frac{2}{\sqrt{3}(s-1)} \\ 0 \end{pmatrix}$. 
Thus, 
\[
C^\circ= \conv\left(\left\{ \begin{pmatrix} \pm \frac{2}{\sqrt{3}} \left( \frac{s+1}{s} \right) \\ \frac{2}{3} \left( \frac{s+1}{s} \right) \end{pmatrix}, 
\begin{pmatrix} 0 \\ -\frac{2}{3} \left( s+1 \right) \end{pmatrix}, 
 \begin{pmatrix} \pm  \frac{2}{\sqrt{3}(s-1)} \\ 0 \end{pmatrix}
\right\}\right).
\]
Note that $C^\circ$ has 5 edges, none of which are parallel, thus $\frac{C^\circ-C^\circ}{2}$ must have 5 different pairs of parallel edges and 10 vertices. We obtain
\begin{align*}
\frac{C^\circ-C^\circ}{2} = \conv\left(\left\{ \begin{pmatrix} \pm \frac{1}{\sqrt{3}}\left( \frac{s+1}{s} \right) \\  \frac{1}{3} \frac{(s+1)^2}{s} \end{pmatrix}, \begin{pmatrix} \pm \frac{1}{\sqrt{3}}\left( \frac{s+1}{s} \right) \\ - \frac{1}{3} \frac{(s+1)^2}{s} \end{pmatrix}, \begin{pmatrix} \pm \frac{2}{\sqrt{3}}\left( \frac{s+1}{s} \right) \\ 0 \end{pmatrix} \right.\right. , \\
 \left.\left.\begin{pmatrix} \pm \frac{1}{\sqrt{3}} \frac{s^2+s-1}{s(s-1)} \\ \frac{1}{3}  \frac{s+1}{s}  \end{pmatrix}, \begin{pmatrix} \pm \frac{1}{\sqrt{3}} \frac{s^2+s-1}{s(s-1)} \\ - \frac{1}{3}  \frac{s+1}{s}  \end{pmatrix} \right\}\right).     
\end{align*}
The last set to be calculated is $\left(\frac{C^\circ - C^\circ}{2} \right)^{\circ} =: \conv\left(\left\{v^1, \dots, v^{10}\right\}\right)$.

Let $v^1$ be such that 
\begin{align*}
 (v^1)^T \begin{pmatrix}  \frac{1}{\sqrt{3}} \frac{s+1}{s} \\  \frac{1}{3} \frac{(s+1)^2}{s}\end{pmatrix}= 1 \quad\text{and}\quad
 (v^1)^T \begin{pmatrix} \frac{1}{\sqrt{3}} \frac{s^2+s-1}{s(s-1)} \\ \frac{1}{3} \frac{s+1}{s} \end{pmatrix}= 1.
\end{align*}
Thus, $v^1=\begin{pmatrix}  \frac{s-1}{s} \\ \frac{(3-\sqrt{3})s^2+\sqrt{3}}{s(s+1)^2} \end{pmatrix}$. Let $v^2$ be such that 
\begin{align*}
 (v^2)^T \begin{pmatrix}  \frac{2}{\sqrt{3}} \frac{s+1}{s} \\ 0 \end{pmatrix}= 1 \quad\text{and}\quad
 (v^2)^T \begin{pmatrix} \frac{1}{\sqrt{3}} \frac{s^2+s-1}{s(s-1)} \\ \frac{1}{3} \frac{s+1}{s} \end{pmatrix}= 1.
\end{align*}
Thus, $v^2=\begin{pmatrix} \frac{\sqrt{3}}{2} \frac{s}{s+1} \\
\frac{1}{2} \frac{s^2-s-1}{s^2-1}
\end{pmatrix}$.
Let $v^3$ be such that 
\begin{align*}
 (v^3)^T \begin{pmatrix} \frac{1}{\sqrt{3}}\left( \frac{s+1}{s} \right) \\ \frac{1}{3} \frac{(s+1)^2}{s} \end{pmatrix}= 1 \quad\text{and}\quad
 (v^3)^T \begin{pmatrix} -\frac{1}{\sqrt{3}}\left( \frac{s+1}{s} \right) \\ \frac{1}{3} \frac{(s+1)^2}{s} \end{pmatrix}= 1.
\end{align*}
Thus, $v^3=\begin{pmatrix}  0 \\
\frac{3s}{(s+1)^2} \end{pmatrix}$. Finally, we conclude from the symmetries of $\frac{C^\circ - C^\circ}{2}$ that
\[
\left(\frac{C^\circ - C^\circ}{2} \right)^{\circ}= \conv\left(\left\{\begin{pmatrix} \pm \frac{s-1}{s} \\ \pm \frac{(3-\sqrt{3})s^2+\sqrt{3}}{s(s+1)^2} \end{pmatrix}, \begin{pmatrix} \pm \frac{\sqrt{3}}{2} \frac{s}{s+1} \\ \pm
\frac{1}{2} \frac{s^2-s-1}{s^2-1}
\end{pmatrix}, \begin{pmatrix}  0 \\
\pm \frac{3s}{(s+1)^2} \end{pmatrix}\right\}\right).
\]
The next thing we do is to compute scaling factors $\mu_1,\mu_2,\mu_3$ with respect to the different types of vertices of $\left(\frac{C^\circ - C^\circ}{2} \right)^{\circ}$ mapping them to the boundary of $\frac{C-C}{2}$. Using the geometry of the two sets the calculations below suffice.
For $\mu_1$ we have to solve
\[
\mu_1 \begin{pmatrix} \frac{s-1}{s} \\ \frac{(3-\sqrt{3})s^2+\sqrt{3}}{s(s+1)^2} \end{pmatrix} = \lambda \begin{pmatrix} \frac{\sqrt{3}}{4} (s-1) \\ \frac{3}{4} \end{pmatrix} + (1-\lambda) \begin{pmatrix} \frac{\sqrt{3}}{2} (s-1) \\ \frac{3}{4} (2-s) \end{pmatrix}.
\]
with $\lambda \in [0,1]$ and obtain
\[
\lambda=\frac{s^2+2s(\sqrt{3}-1)-3}{s^2+\sqrt{3}s-1} \quad \text{and} \quad 
\mu_1=\frac{ \frac{\sqrt{3}}{4} s(s+1)^2 }{s^2+\sqrt{3} s-1}. 
\]
For $\mu_2>1$ we obtain that either the equations 
\begin{align*}
 \mu_2 \begin{pmatrix} \frac{\sqrt{3}}{2}\frac{s}{s+1} \\
\frac{1}{2} \frac{s^2-s-1}{s^2-1}
\end{pmatrix} &= \lambda \begin{pmatrix} \frac{\sqrt{3}}{2} (s-1) \\ \frac{3}{4} (2-s) \end{pmatrix}+(1-\lambda) \begin{pmatrix} \frac{\sqrt{3}}{2} (s-1) \\ -\frac{3}{4} (2-s) \end{pmatrix} 
\intertext{or the equations}
 \mu_2 \begin{pmatrix} \frac{\sqrt{3}}{2} \frac{s}{s+1} \\ \frac{1}{2} \frac{s^2-s-1}{s^2-1} \end{pmatrix} &= \lambda
\begin{pmatrix} \frac{\sqrt{3}}{2} (s-1) \\
\frac{3}{4} 
\end{pmatrix}+(1-\lambda) \begin{pmatrix} \frac{\sqrt{3}}{2} (s-1) \\ \frac{3}{4} (2-s) \end{pmatrix}
\end{align*}
have to be fulfilled for some $\lambda \in [0,1]$.
It turns out that for $s \le \frac{4+\sqrt{26}}{5}$ holds the first system from which we obtain $\lambda= \frac{-s^2+4s-2}{6s(2-s)}$ and $\mu_2=\frac{s^2-1}{s}$.
In case $\frac{4+\sqrt{26}}{5} \le s \le 2$ the second system is the right one, which gives us 
$\lambda= \frac{5s^2-8s-2}{-2s^2+8s-1}$ and $\mu_2=\frac{3(3s-7)}{2s^2-8s+1}$.

For the last factor, $\mu_3>1$, we have to solve 
\[
\mu_3 \begin{pmatrix} 0 \\ \frac{3s}{(s+1)^2} \end{pmatrix} =\lambda \begin{pmatrix} -\frac{\sqrt{3}}{4} (s-1) \\ \frac{3}{4} \end{pmatrix}+ (1-\lambda) \begin{pmatrix} \frac{\sqrt{3}}{4} (s-1) \\ \frac{3}{4} \end{pmatrix}  
\]
for some  $\lambda \in [0,1]$, which leads to
$\lambda= \frac12$ and $\mu_3 = \frac{(s+1)^2}{4s}$. 

Finally, let $\beta(s) \leq 1$ be such that $\left(\frac{C^\circ - C^\circ}{2} \right)^{\circ} \subset^{opt} \beta(s) \cdot \frac{C-C}{2}$. Then, due to the symmetries of $C$ and since $\mu_1 > \max\{\mu_2,\mu_3\}$,
we obtain
\begin{equation*}
  \beta(s) = \max \left\{ \frac{1}{\mu_1}, \frac{1}{\mu_2}, \frac{1}{\mu_3} \right\} = \max \left\{ \frac{1}{\mu_2}, \frac{1}{\mu_3} \right\} = 
    \begin{cases}
      \frac{s}{s^2-1} & \text{if} \quad s \in [\frac{1+\sqrt{5}}{2},\frac53],\\
      \frac{4s}{(s+1)^2}, & \text{if} \quad s \in [\frac53,2].
    \end{cases}       
\end{equation*}
\end{enumerate}
Here one should note that we already know from (i) that $\beta_2 \ge \beta_1 \ge  \frac{4s}{(s+1)^2}$, i.e.~the $\frac{4s}{(s+1)^2}$ part for $s \in [\frac53,2]$ of the above calculation provides essentially no new information.

\end{proof}

Let us remark that for any asymmetry value $s \in [1,2]$ and any factor $\alpha(s) \in [\alpha_1(s),\alpha_2(s)]$ or $\beta(s) \in [\beta_1(s),\beta_2(s)]$ from Theorem \ref{thm:small_asym_no_improve}, respectively, there exists a Minkowski centered set 
$C$ with asymmetry $s(C)=s$ such that $C \cap (-C) \subset^{opt} \alpha(s) \conv(C \cup (-C))$  or $(\frac{C^\circ - C^\circ}{2})^\circ \subset^{opt} \beta(s) \frac{C-C}{2}$, respectively.

Finally, we would also like to mention that similar to the upper bounds on $\alpha(s)$ and $\beta(s)$ for $s(C)$ close to $n$ from
Theorem \ref{thm:small_asym_no_improve} (namely, 
$\alpha_2(s) \le  \psi \frac{n}{n+1}$ and $\beta_2(s) \le \mu \psi \frac{n(n+2)}{(n+1)^2}$) 
one may use the ideas from the proof of Theorem \ref{thm:minMax_mean_improved} to derive also lower bounds on $\alpha(s)$ and $\beta(s)$ for $s(C)$ close to $n$, i.e., $\alpha_1(s) \ge  f_1(s)$ and $\beta_1(s) \ge \mu f_2(s)$ for some continuous functions $f_1$, $f_2$ fulfilling $f_1(n)=\frac{n}{n+1}$ and $f_2(n)=\frac{n(n+1)}{(n+1)^2}$.

\section*{Appendix} \label{sec:app}

The \cemph{circumradius} of $K$ w.r.t.~the gauge body $C$ is defined as 
\[R(K,C) = \min \{\rho \ge 0 : K \subset \rho C +t, \ t \in \R^n\}.\] 

Surprisingly, the definition of a diameter with respect to a (possibly) non-symmetric gauge body $C \in \K^n$ (with $0 \in \inter(C)$) is not unified. While in \cite{Le} 
it is defined as 
\[
D_{\max}(K,C) = \max_{x,y \in K} \|x-y\|_C, 
\]
which we call the \cemph{maximal diameter}, and which at first view is the most natural generalization of a diameter for non-symmetric gauges; others (see c.f.~\cite{DGK}) preferred and partly argued to choose the following definition
of the \cemph{diameter} of $K$ w.r.t.~$C$:
\[
D(K,C) = 2 \max_{x,y \in K} R(\{x,y\},C). 
\]
The latter definition allows to see the diameter as a best 2-point approximation of the circumradius of the whole set $K$. Another advantage of it is that it is translation invariant in both arguments. In contrast, for the maximal diameter 
choosing $C$ with $0$ close to the boundary of $C$, the circumradius-diameter ratio may get arbitrarily small.

However, the choice of a definition should always fit its desired properties. For instance, if choosing an asymmetric gauge body is motivated by the desire to measure the distance from $x$ to $y$ different than that from $y$ to $x$, the latter should possibly be reflected in the length measurements (instead of measuring the length of the segment $[x,y]$ independently of its direction). 
Thus there may be applications where we would prefer to measure the distance from $x$ to $y$ by $\|x-y\|_C$, which then would lead us to the maximal diameter. 

And, this is part of our motivation for the investigation above, one can see that 
\[
D_{\max}(K,C) = D(K, C \cap (-C)), \quad \text{while} \quad D(K,C) = D\left(K, \frac{C-C}2\right).
\]
Moreover, if $C$ is Minkowski centered, the results above show us, that we can bound those diameters in terms of the other and therefore also the circumradius-diameter ratio for the maximal diameter.

Finally, it is easy to see that there are also well motivated definitions of lengths of segments or directional breadths w.r.t.~a given gauge $C$ that lead to diameters that depend on the harmonic mean $\left(\frac{C^\circ - C^\circ}2\right)^\circ$ or the maximum $\conv(C \cup (-C))$.

\end{document}